\newcommand{\hn}[1]{\h^{#1}}
\renewcommand{\S}{\Sigma}
\newcommand{\norma}[1]{\Vert #1 \Vert}
\newcommand{\ol}{\overline}
\newcommand{\pai}{\partial_\infty}
\newtheorem{theorem}{Theorem}[section]
\newtheorem{proposition}[theorem]{Proposition}
\newtheorem{lemma}[theorem]{Lemma}
\newtheorem{claim}[theorem]{Claim}
\theoremstyle{definition}
\newtheorem{remark}[theorem]{Remark}
\newtheorem{example}[theorem]{Example}
\newtheorem*{ack}{Acknowledgements}
\theoremstyle{plain}
\newcommand{\R}{\mathbb{R} }
\newcommand{\N}{\mathbb{N} }
\newcommand{\h}{\mathbb{H} }
\renewcommand{\rho}{\varrho}
\newcommand{\abs}[1]{\vert #1\vert}
\renewcommand{\Theta}{\varTheta}
\renewcommand{\Lambda}{\varLambda}
\renewcommand{\Sigma}{\varSigma}
\renewcommand{\tau}{\uptau}
\newcommand{\overbar}[1]{\mkern 1.5mu\overline{\mkern-1.5mu#1\mkern-1.5mu}\mkern 1.5mu}
\newcommand{\forma}[1]{\langle #1\rangle}
\newcommand{\tpitchfork}{%
 \vbox{
 \baselineskip\z@skip
 \lineskip-.52ex
 \lineskiplimit\maxdimen
 \m@th
 \ialign{##\crcr\hidewidth\smash{$-$}\hidewidth\crcr$\pitchfork$\crcr}
 }%
}
\begin{document}

\title[Solitons to mean curvature flow in $\hn3$]{Solitons to Mean Curvature Flow \\ in the hyperbolic $3$-space}
\author{R. F. de Lima \and A. K. Ramos \and J. P. dos Santos}
\address[A1]{Departamento de Matem\'atica - UFRN}
\email{ronaldo.freire@ufrn.br}
\address[A2]{Departamento de Matemática Pura e Aplicada - UFRGS}
\email{alvaro.ramos@ufrgs.br}
\address[A3]{Departamento de Matem\'atica - UnB}
\email{joaopsantos@unb.br}
\thanks{The second and third authors were partially supported by the National Council for
Scientific and Technological Development – CNPq}

\maketitle

\begin{abstract}
We consider {translators} (i.e., initial condition of translating solitons) to mean curvature flow (MCF) in the hyperbolic $3$-space $\mathbb{H}^3$, providing existence and classification results. More specifically, we show the existence and uniqueness of two distinct one-parameter families of complete rotational translators in $\mathbb{H}^3,$ one containing catenoid-type translators, and the other parabolic cylindrical ones. We establish a tangency principle for translators in $\mathbb{H}^3$ and apply it to prove that properly immersed translators to MCF in $\mathbb{H}^3$ are not cylindrically bounded. As a further application of the tangency principle, we prove that any horoconvex translator which is complete or transversal to the $z$-axis is necessarily an open set of a horizontal horosphere. In addition, we classify all translators in $\mathbb{H}^3$ which have constant mean curvature. We also consider rotators (i.e., initial condition of rotating solitons) to MCF in $\mathbb{H}^3$ and, after classifying the rotators of constant mean curvature, we show that there exists a one-parameter family of complete rotators which are all helicoidal, bringing to the hyperbolic context a distinguished result by Halldorsson, set in $\mathbb{R}^3.$

\vspace{.15cm}
\noindent{\it 2020 Mathematics Subject Classification:} 53E10 (primary), 53E99 (secondary).

\vspace{.1cm}

\noindent{\it Key words and phrases:} soliton -- mean curvature flow -- hyperbolic space -- invariant surfaces.
\end{abstract}

\section{Introduction}\label{sec:intro}

{The last decades flourished with great regard to the theory
of extrinsic geometric flows in Riemannian manifolds, especially to
mean curvature flow
in Euclidean spaces, giving rise to a
vast literature on the subject
(cf.~\cite{andrewsetal} and the references therein).
Extrinsic geometric flows constitute evolution equations that describe hypersurfaces
of a Riemannian manifold evolving in the normal direction with velocity given by the corresponding
extrinsic curvature. A special class of solutions is that of the {\em solitons}, also known as the
{\em self-similar} solutions, which are characterized
for being generated by the Killing field defined by a one-parameter
subgroup of isometries of the ambient manifold.
When these isometries are
translations along a geodesic, we call
the corresponding self-similar solutions
{\em translating solitons} to the giving flow, and the initial
hypersurfaces are known as {\em translators}.
{A main feature of translators in Euclidean spaces is that
they naturally appear as type II singularities of certain compact solutions
to mean curvature flow (cf.~\cite[Theorem 4.1]{huisken-sinestrari})}.}

There exist many examples of translators to mean
curvature flow (MCF, for short)
in Euclidean space $\R^3.$
Three of the best known
are the cylinder over the graph of the function $f(t)=-\log(\cos t),$ $t\in(-\pi/2,\pi/2),$ called
the {\em grim reaper}, the rotational entire graph over $\R^2$ obtained by Altschuler and Wu~\cite{altschuler-wu}
known as the {\em translating paraboloid} {or {\em bowl soliton}}, and the one-parameter family of rotational annuli obtained by
Clutterbuck, Schn\"urer and Schulze~\cite{schulzeetal}, the so called {\em translating catenoids}.
On the other hand, little is known about translators in hyperbolic spaces.

In this paper, we consider solitons to MCF in hyperbolic
space $\h^3$, and first we focus on the case of
translators which move by hyperbolic translations along a fixed geodesic.
We classify all such surfaces with
constant mean curvature (Theorems~\ref{thmconj} and~\ref{th-cmctranslator}) and also obtain
new families of examples (see
Figure~\ref{firstfig}), which, by some similarities with the translators
in $\R^3$ described above, will be called
the {\em translating catenoid}
(Theorem~\ref{th-translatingcatenoids})
and the {\em grim reaper} (Theorem~\ref{th-grimreapers}).
These translators are then proven to be unique with respect to
their fundamental properties (Theorems \ref{th-uniquenesscatenoid}
and \ref{th-uniquenesscylinder}).

\begin{figure}[h]
\centering
\includegraphics[width=0.48\textwidth]{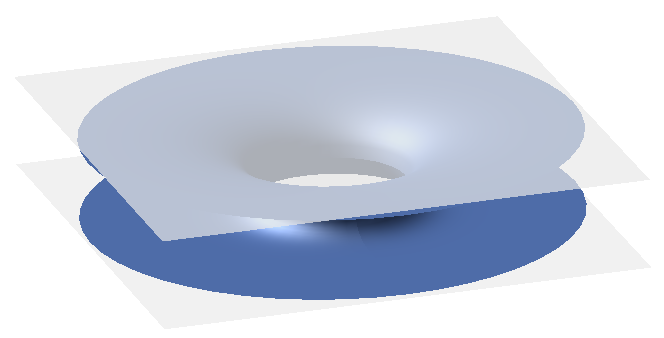}\hfill
\includegraphics[width=0.48\textwidth]{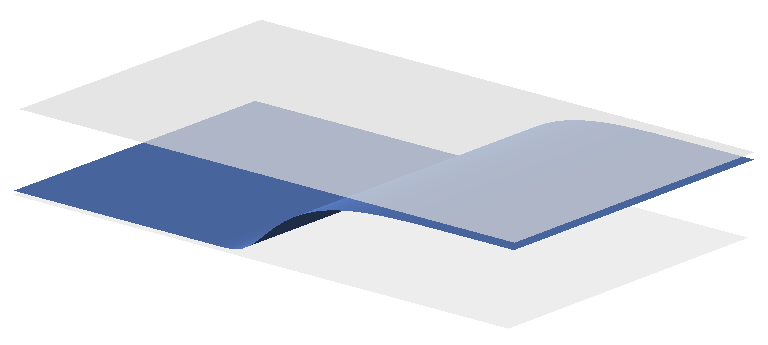}
\caption{A translating catenoid and a grim reaper in the half-space
model of $\hn3$.\label{firstfig}}
\end{figure}

A main tool in establishing uniqueness results for translators in
Euclidean space is the tangency principle.
It asserts that two such translators which are tangent at a point,
with one on one side of the other in a neighborhood of this point,
must coincide in this neighborhood. In fact, this result is a direct
consequence of the tangency principle for minimal surfaces, since
translators in Euclidean space become minimal surfaces when
the ambient space is endowed with a suitable metric (cf.~Section~\ref{sec-tp}).
Apparently, for translators in hyperbolic space, such a metric is not available.
Nevertheless, we prove that a tangency principle
holds for translators in $\h^3$ as well (Theorem~\ref{tangencyprin}).
Then, by using the family of translating catenoids as barriers, we apply it
to prove that properly immersed translators in $\hn3$ are
never cylindrically bounded and, in particular, never closed.
As a further application, we show that any horoconvex translator
which is complete or transversal to the axis of the translation is
necessarily an open set of a horosphere (Theorem~\ref{thmnovo}).

We study, as well,
{\em rotators} to MCF, that is, initial data of solitons
whose associated isometries are rotations around a geodesic.
In \cite{halldorsson}, Halldorsson considered rotators in $\R^3,$ obtaining a one-parameter
family of complete helicoidal rotators in $\R^3$ which are also translators.
Inspired by Halldorsson's work,
we obtain here an analogous result
{(Theorem~\ref{th-MCFh3})},
in which we construct a one-parameter family of helicoidal
surfaces in $\h^3$ that, under mean curvature flow, rotates around its axis
and translates downwards with velocity that equals its pitch, see
Figure~\ref{secondfigure}.
As in the case of translators, we also classify all rotators of
constant mean curvature in $\h^3$ (Theorem~\ref{th-classificationCMCrotators}).

\begin{figure}[h]
\centering
\includegraphics[width=0.5\textwidth]{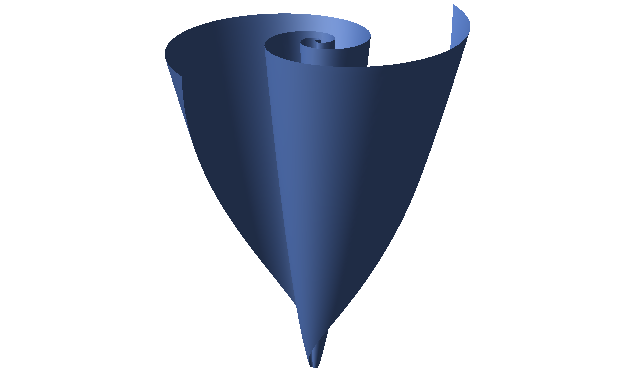}
\caption{A helicoidal rotational soliton.\label{secondfigure}}
\end{figure}

The paper is organized as follows. In Section \ref{sec-preliminaries}, we set some
notation and formulae. In Section \ref{sec-translators}, we introduce translators
to MCF in $\h^3$ and establish the aforementioned results related to them.
In Section \ref{sec-rotators} we deal with rotators to MCF in $\h^3.$
Finally, we use Section~\ref{seclastproof}
to present the classification of minimal translators in $\hn3$.

\begin{ack}
We heartily thank Eric Toubiana,
Marcus Marrocos and Leonardo Bonorino for enlightening conversations
that considerably improved the presentation of the paper.
\end{ack}

\section{Preliminaries} \label{sec-preliminaries}

Throughout the paper, we shall consider the upper half-space model of
$\h^3,$ that is, $\h^3:=(\R_+^3,ds^2),$
where $\R^3_+ = \{(x,y,z)\in \R^3\mid z>0\}$,
$ds^2:={d\bar s^2}/{z^2}$ and
$d\bar s^2$ is the standard Euclidean
metric of $\R_+^3.$ We will also denote $ds^2$ by
$\langle\,,\,\rangle.$

Let $\Sigma$ be an oriented surface
in a Riemannian 3-manifold $\ol M$. Set
$\ol \nabla$ for the Levi-Civita connection of $\ol M,$
$\eta$ for the unit normal field of $\Sigma,$
and $A$ for its shape operator with respect to
$\eta,$ so that
\[
AX=-\ol \nabla_X\eta, \,\, X\in T\Sigma,
\]
where $T\Sigma$ stands for the tangent bundle of $\Sigma$.
The principal curvatures of $\Sigma,$ that is,
the eigenvalues of $A,$ will be denoted by
$k_1, k_2$ and
the mean curvature $H$ of $\Sigma$ is
expressed by
\[
H=\frac{k_1+k_2}2.
\]
The mean curvature vector of $\Sigma$ is
\begin{equation*}\mathbf{H}=H\eta,\end{equation*}
which is invariant under the choice of orientation
$\eta\to -\eta$ and satisfies $\norma{\mathbf{H}} = \abs{H}$.

Given an oriented surface $\Sigma\subset\R_+^3,$ let
$\bar\eta=(\bar\eta_1,\bar\eta_2,\bar\eta_3)$
be the unit normal of $\Sigma$ with respect to
the induced Euclidean metric $d\bar s^2.$
It is easily checked that
\[
\eta(p)=z\bar\eta(p), \,\,\, p=(x,y,z)\in\Sigma
\]
defines a unit normal of $\Sigma$ with respect to the
hyperbolic metric $ds^2.$
With these orientations, if we denote by
$\overbar H$ (resp.~$H$) the mean curvature of $\Sigma$ with respect to the Euclidean metric
(resp. hyperbolic metric) of $\R_+^3,$
we have that $\overbar H$ and $H$ satisfy the following relation (cf. \cite[Lemma 10.1.1]{lopez}):
\begin{equation} \label{eq-MCrelation}
H(p)=z\overbar H(p)+\bar\eta_3(p) \,\,\, \forall p=(x,y,z)\in\Sigma.
\end{equation}

\subsection{Mean curvature flow}
We say that a family of oriented
surfaces $\Sigma_t=X_t(M)$ of a Riemannian $3$-manifold $\overbar M$
\emph{evolves under mean curvature flow} if the corresponding one-parameter
family of immersions
\[
X_t\colon M\rightarrow\overbar M, \,\,\, t\in[0,\delta), \,\,\, 0<\delta\le+\infty,
\]
satisfies the following condition:
\begin{equation} \label{eq-Kalphaflow}
\frac{\partial X_t}{\partial t}^\perp(p)=H_t(p)\eta_t(p) \,\,\, \forall p\in M,
\end{equation}
where $\eta_t$ is the unit normal to $X_t,$ $H_t$ is the mean curvature
of $X_t$ with respect to $\eta_t,$ and
$\frac{\partial X_t}{\partial t}^\perp$ denotes the normal component of
$\frac{\partial X_t}{\partial t},$ that is,
\[
\frac{\partial X_t}{\partial t}^\perp=\left\langle\frac{\partial X_t}{\partial t},\eta_t\right\rangle \eta_t\,.
\]
In particular, the equality \eqref{eq-Kalphaflow} is equivalent to
\begin{equation*}
\left\langle\frac{\partial X_t}{\partial t},\eta_t\right\rangle=H_t.
\end{equation*}

We call such a family $X_t:M\rightarrow\overbar M,$ $t\in[0,\delta)$
a \emph{mean curvature flow} (MCF, for short) in $\overbar M$ with initial data $X_0.$
In this setting,
we say that $\Sigma_t=X_t(M)$
is a \emph{soliton} or a \emph{self-similar solution} to MCF if there exists a
one-parameter subgroup $\mathcal G:=\{\Gamma_t\mid t\in\R\}$
of the group of isometries of $\overbar M,$ such that
$\Gamma_0$ is the identity map of $\overbar M$ and
\begin{equation*}
\Sigma_t=\Gamma_t(\Sigma)\,\,\,\forall t\in\R
\end{equation*}
is a MCF. More specifically, we shall call such a
family $\Sigma_t$ a $\mathcal G$-\emph{soliton}.

Let $\xi$ be the Killing field determined by the subgroup $\mathcal G,$ i.e.,
for any $p\in\overbar M,$
\[
\xi(p):=\frac{\partial}{\partial t}\Gamma_t(p) \,\,\,\,\, \text{at} \,\,\, t=0.
\]
It can be proved (see, e.g., \cite{hungerbuhler-smoczyk})
that the surface $\Sigma=X_0(M)$ with unit normal
$\eta$ is the initial condition of a $\mathcal G$-\emph{soliton} generated by $\xi$
in $\overbar M$ if and only if the equality
\begin{equation} \label{eq-main}
{H = \forma{\xi,\eta}}
\end{equation}
holds everywhere on $\Sigma.$ So, {in the class of solitons, equation \eqref{eq-Kalphaflow}
is in fact a prescribed mean curvature problem.}

\section{Translators to MCF in $\h^3$} \label{sec-translators}
Consider in hyperbolic space $\h^3$
the group $\mathcal G=\{\Gamma_t\mid t\in\R\}\subset{\rm Iso}(\h^3)$
of hyperbolic translations along the $z$-axis, defined by
\[
\Gamma_t(p)=e^tp, \,\,\, p\in\h^3.
\]
In this setting, an initial condition of a $\mathcal G$-soliton will be called
a \emph{translating soliton} or simply a \emph{translator}.
Using the abuse of notation
\begin{equation*}p = (x,y,z)\in \hn3 \leftrightarrow
x\partial_{x}+ y\partial_{y}+ z\partial_{z}
\in T_p\hn3,\end{equation*}
the Killing field associated to $\mathcal G$ is
$\xi(p)=p,$ $p\in\h^3$.
Thus, it follows from~\eqref{eq-main} that a
surface $\Sigma\subset\h^3$ is a translator to
MCF if and only if
\begin{equation} \label{eq-translatorH301}
H(p)=\langle p,\eta(p)\rangle \,\,\,\forall p\in\Sigma.
\end{equation}

\begin{example}
Let $\Pi$ be a totally geodesic vertical plane of $\h^3$
which contains $(0,0,1)$.
Since $H$ vanishes on $\Pi$, it is clear that
\eqref{eq-translatorH301} holds for $\Sigma=\Pi.$
Thus, $\Pi$ is a stationary translator to MCF in $\h^3.$
\end{example}

In fact, equation~\eqref{eq-main} implies that a
minimal surface $\Sigma\subset \hn3$ is a (stationary)
translator to MCF
if and only if it is invariant under the group $\mathcal G$
of hyperbolic isometries as above.
A complete classification of such surfaces
is given by the following description.

\begin{theorem}\label{thmconj}
{There exists a one-parameter family $\Sigma_\theta$, $\theta\in(0,\pi],$
of properly embedded minimal surfaces in $\h^3$ with the following properties:}
\begin{itemize}[parsep=1ex]
\item[\rm i)] {$\Sigma_\theta$ is invariant under
the one-parameter group $\{\Gamma_t\}_{t\in\R}$ of
hyperbolic translations}
\[
{p\in\h^3\mapsto \Gamma_t(p) := e^tp\in\h^3,}
\]
{and so it is a stationary translator to MCF in $\h^3.$}

\item[\rm ii)] {$\partial_\infty\Sigma_\theta\cap\R^2$ is the union of two half-lines making an angle $\theta.$}

\item[\rm iii)] {$\Sigma_\pi$ is a vertical plane.}
\end{itemize}
Conversely, if $\Sigma$ is a properly embedded minimal surface of \,$\h^3$ which is invariant under the group
$\Gamma_t,$ then $\Sigma=\Sigma_\theta$ for some $\theta\in(0,\pi].$
\end{theorem}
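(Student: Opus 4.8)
The plan is to reduce the problem to the analysis of an ODE governing the profile curve. Since a surface $\Sigma$ invariant under the hyperbolic translations $\Gamma_t(p)=e^tp$ is determined by its intersection with the horosphere $\{z=1\}$ (or with the geodesic sphere $|p|_{\rm euc}=1$), I would first parametrize such a $\Sigma$ by letting the group act on a curve $\gamma$ lying in a fixed "slice." Concretely, working in the half-space model with Euclidean polar-type coordinates, one writes $\Sigma$ as the $\mathcal G$-orbit of a curve in the totally geodesic plane — but a cleaner choice is to use the fact that $\Gamma_t$ scales all coordinates equally, so that $\Sigma$ is a Euclidean cone with vertex at the origin. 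Thus $\Sigma$ is a cone over a curve $\beta$ on the Euclidean unit sphere $\s^2_+\subset\R^3_+$. I would then compute the hyperbolic mean curvature $H$ of such a cone, using the conformal relation \eqref{eq-MCrelation} between $H$ and the Euclidean mean curvature $\overbar H$: since a cone over a curve $\beta\subset\s^2$ has one vanishing Euclidean principal curvature (the ruling direction), $\overbar H$ is expressed purely in terms of the geodesic curvature of $\beta$ in $\s^2$ together with the radial distance. The minimality equation $H\equiv 0$ then becomes a first- or second-order ODE for $\beta$.

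The key step is to solve, and qualitatively describe the solutions of, this ODE on $\s^2_+$. I expect the equation to be an autonomous ODE for the geodesic curvature of $\beta$ as a function of its position on the sphere — plausibly reducing, after a first integral, to a one-parameter family of curves $\beta_\theta$ on $\s^2_+$ whose two endpoints lie on the equator $\partial\s^2_+=\s^1$ and subtend an angle $\theta$ there. The case where $\beta$ is itself a great-circle arc (geodesic of $\s^2$) through the north pole corresponds to $\Sigma$ being a vertical totally geodesic plane, which gives $\theta=\pi$ and item (iii). For the remaining $\theta\in(0,\pi)$ one gets non-totally-geodesic examples. I would verify properness and embeddedness by checking that each $\beta_\theta$ is embedded, has its closure meeting the equator transversally at exactly two points, and that the resulting cone is a smooth embedded surface away from the origin (the origin is not a point of $\h^3$, so there is no singularity issue in $\h^3$ itself). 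Computing $\partial_\infty\Sigma_\theta\cap\R^2$ amounts to taking the Euclidean closure of the cone and intersecting with $\{z=0\}$: since the cone is generated by rays from the origin through $\beta_\theta$, this closure meets $\{z=0\}$ in the two rays through the two equatorial endpoints of $\beta_\theta$, which make angle $\theta$ — giving item (ii).

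For the converse, I would argue that any properly embedded minimal $\Sigma$ invariant under $\mathcal G$ is, by invariance, a Euclidean cone over a curve $\beta\subset\s^2_+$; minimality forces $\beta$ to solve the same ODE; properness and embeddedness of $\Sigma$ force $\beta$ to be a complete embedded solution with endpoints on the equator (it cannot spiral or close up inside the open hemisphere, since a closed geodesic-curvature-constrained curve would violate properness or embeddedness of the cone, and an endpoint in the open hemisphere would make $\Sigma$ incomplete as a properly embedded surface — here one uses that $\s^2_+$ with the induced metric from $\h^3$ is incomplete exactly at the equator). Matching $\beta$ with one of the $\beta_\theta$ via uniqueness for the ODE initial value problem then yields $\Sigma=\Sigma_\theta$.

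The main obstacle I anticipate is the qualitative ODE analysis: showing that for every $\theta\in(0,\pi)$ there is exactly one solution curve realizing that angle, that it is embedded and has the asymptotic behavior claimed at the equator (so that the cone is properly embedded up to infinity), and ruling out solutions that wind around or fail to reach the equator. This is the step that requires genuine work; the reduction to the cone and the mean-curvature computation via \eqref{eq-MCrelation} should be essentially mechanical, and the deduction of items (ii)–(iii) and the converse should follow once the ODE picture is in hand. (A remark: since this classification is deferred in the paper to Section~\ref{seclastproof}, the authors likely phrase it intrinsically in $\h^3$ — e.g.\ via the asymptotic boundary and a maximum-principle/uniqueness argument rather than explicit integration — but the cone reduction above is the natural first move regardless.)
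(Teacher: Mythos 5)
Your first move---observing that invariance under $\Gamma_t(p)=e^tp$ makes $\Sigma$ a Euclidean cone over a curve in a fixed slice---is exactly how the paper's proof begins (the paper uses the horosphere $\{z=1\}$ rather than the unit hemisphere as the slice, which is the same reduction). After that the two arguments diverge, and the divergence is where your proposal has a genuine gap. You propose to derive an ODE for the cross-sectional curve and to extract from it, by qualitative analysis, existence and uniqueness of an embedded solution for each angle $\theta\in(0,\pi]$ together with the correct behaviour at the equator. You correctly flag this as ``the step that requires genuine work,'' but you do not supply it, and it is essentially the entire content of the theorem. In particular, nothing in your plan indicates a mechanism for proving (a) that the angle subtended at infinity cannot exceed $\pi$, (b) that the generating curve cannot spiral, close up, or meet a ray through the origin twice, or (c) that for each $\theta$ the surface is unique. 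None of these follow routinely from the existence of a first integral of an autonomous system; some geometric input is required.

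The paper obtains (a) and (b) not from the ODE but from a comparison argument: if the generating curve in $\{z=1\}$ met a Euclidean line through the origin twice without coinciding with it, one could place a tilted equidistant plane (an equidistant surface to a vertical totally geodesic plane, with asymptotic boundary through the origin) tangent to $\Sigma$ along a ruling line, with $\Sigma$ on its mean-convex side, contradicting the mean curvature comparison principle. This forces the polar angle along the generating curve to be monotone with limits $0$ and $\theta_+\in(0,\pi]$, and hence the asymptotic boundary to be a hinge of angle $\theta_+$. Existence and uniqueness for a prescribed hinge angle---that is, point (c) and the existence half of the statement---are then not proved by integrating anything: they are quoted from \cite{GRR} and \cite[Proposition~A.1]{ST} as a Plateau-type result at infinity. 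To complete your route you would need either to carry out the phase-plane analysis in full (including a shooting or continuity argument realizing every $\theta\in(0,\pi]$ and an argument excluding winding solutions), or to do what the paper does and replace the ODE analysis by the barrier argument plus the cited asymptotic Plateau theory.
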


The proof of Theorem~\ref{thmconj}, for convenience,
will be presented separately in Section~\ref{seclastproof}.
Concerning the case of translators with nonzero
constant mean curvature,
we start with the next example.

\begin{example}
Let $\mathscr H_h$ be the horosphere of $\h^3$
at height $h>0,$ i.e.,
\[
\mathscr H_h=\{(x,y,h)\in\h^3\mid x,y\in\R\}.
\]
At any point $p=(x,y,h)\in\mathscr H_h,$ we have that
$H(p)=1$ and $\eta(p)=he_3,$ so that
\[
\langle p,\eta(p)\rangle=\frac1{h^2}h^2=1=H(p) \,\,\ \forall p\in\mathscr H_h.
\]
Hence, $\mathscr H_h$ is a translator to MCF in $\h^3.$
\end{example}

{In our next result we show that horospheres are the only translators
to MCF which have nonzero constant mean curvature.
In the proof, we shall
use the following evolution formula for the mean curvature $H_t$ (notation as in Section \ref{sec-preliminaries})
of a mean curvature flow $X_t:M\to\overbar M$:}
\begin{equation} \label{eq-evolutionequation}
\frac{\partial H_t}{\partial t}=\Delta H_t+H_t(\|A_t\|^2+\overbar{\rm Ric}(\eta_t,\eta_t)),
\end{equation}
{where $\overbar{\rm Ric}$ denotes the Ricci tensor of $\overbar M$ (see \cite[Theorem 3.2-(v)]{huisken-polden}).}

\begin{theorem} \label{th-cmctranslator}
Let $\Sigma$ be a connected translator to {\rm MCF} in $\h^3$ which
has nonzero constant mean curvature.
Then, $\Sigma$ is an open subset of a horosphere.
\end{theorem}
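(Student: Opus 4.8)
The plan is to use the evolution equation~\eqref{eq-evolutionequation} to turn the static soliton condition~\eqref{eq-translatorH301} into a pointwise identity for the second fundamental form, and then to invoke the classification of surfaces of $\h^3$ with constant principal curvatures. Write $H\equiv c\neq 0$ on $\Sigma$. Since $\Sigma$ is a translator, the family $\Sigma_t:=\Gamma_t(\Sigma)=e^t\Sigma$, $t\in\R$, is a mean curvature flow by the characterization accompanying~\eqref{eq-main}. Each $\Gamma_t$ is an orientation-preserving isometry of $\h^3$, so it carries $(\Sigma,\eta)$ isometrically onto $(\Sigma_t,\eta_t)$ with the same mean curvature function; as $H\equiv c$ on $\Sigma=\Sigma_0$, we get $H_t\equiv c$ on $\Sigma_t$ for every $t$, hence $\partial_t H_t\equiv 0$ and $\Delta H_t\equiv 0$. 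Using that $\overbar{\rm Ric}=-2\forma{\,,\,}$ on $\h^3$, equation~\eqref{eq-evolutionequation} evaluated at $t=0$ then reduces to $0=c(\|A\|^2-2)$, and since $c\neq 0$ this forces $\|A\|^2=k_1^2+k_2^2\equiv 2$ on $\Sigma$. Combined with $k_1+k_2=2c$, this gives $k_1k_2=\tfrac12\big((k_1+k_2)^2-(k_1^2+k_2^2)\big)=2c^2-1$, so $k_1,k_2$ are the (constant) roots of $X^2-2cX+(2c^2-1)$; in particular $\Sigma$ has constant principal curvatures (and $|c|\le 1$, from non-negativity of the discriminant).

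Now I would invoke the classification of surfaces of $\h^3$ with constant principal curvatures --- equivalently, of isoparametric surfaces (Cartan) together with the classical list of totally umbilical surfaces: $\Sigma$ is then an open subset of one of a totally geodesic plane, a horosphere, a geodesic sphere of radius $r>0$, an equidistant surface at distance $r>0$ from a totally geodesic $\h^2$, or a tube of radius $r>0$ around a geodesic. The respective values of $\|A\|^2$ are $0$, $2$, $2\coth^2 r$, $2\tanh^2 r$, and $\coth^2 r+\tanh^2 r$. Since $2\coth^2 r>2$, $2\tanh^2 r<2$, and $\coth^2 r+\tanh^2 r=2+(\coth r-\tanh r)^2>2$ for every $r>0$, the identity $\|A\|^2\equiv 2$ from the first step selects the horosphere as the only possibility. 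As $\Sigma$ is connected, it is therefore an open subset of a single horosphere, which proves the theorem.

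The conceptual heart of the argument is the first step: passing from the prescribed–mean–curvature equation to the parabolic evolution equation and exploiting that here the flow is a path of isometries (so $H_t$ is constant in space \emph{and} in time) to extract the rigidity identity $\|A\|^2\equiv -\overbar{\rm Ric}(\eta,\eta)$. Once that is in hand the remainder is essentially bookkeeping; the only care needed is to keep track of the admissible ranges ($r>0$) of the parameters so as to rule out each non-horospherical model. An alternative to citing the isoparametric classification in the non-umbilical regime $|c|<1$ would be to observe directly that $k_1k_2\equiv 2c^2-1<1$, whereas any tube around a geodesic satisfies $\coth r\cdot\tanh r=1$ --- again a contradiction.
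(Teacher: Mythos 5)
Your argument is correct and follows essentially the same route as the paper: use the evolution equation \eqref{eq-evolutionequation} along the isometric flow $\Sigma_t=e^t\Sigma$ to force $\|A\|^2\equiv 2$, conclude that $\Sigma$ is isoparametric, and then eliminate all models in the classification of isoparametric surfaces of $\h^3$ except the horosphere. The only (immaterial) difference is bookkeeping: the paper first uses $H\in(0,1]$ to reduce to horospheres and equidistant surfaces before applying $\|A\|^2=2$, whereas you compare $\|A\|^2$ directly across all five families.
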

\begin{proof}
{After a change of orientation, we may assume without loss
of generality that the mean curvature $H$ of $\Sigma$ is positive.}
{Let $X_t:M\to\h^3,$ $t>0,$ be the MCF such that $X_0(M)=\Sigma$ and
\begin{equation*}X_t(p)=e^tX_0(p), \,\,\, p\in M.\end{equation*}
Since
$X_t(M)$ differs from $X_0(M)$ by an ambient isometry,
$H_t=H>0$ is constant in space and time, thus
${\partial H_t}/{\partial t}=\Delta H_t=0.$
Also, in $\h^3,$ $\overbar{\rm Ric}(\eta_t,\eta_t)=-2.$
Then, formula \eqref{eq-evolutionequation} yields $\|A_t\|^2=2$
for all $t\ge 0.$ Taking $t=0,$ we conclude that
the principal curvatures $k_1, k_2$ of $\Sigma$ satisfy:}
\[
{\left\{
\begin{array}{cccccl}
k_1&+&k_2&=&2H,\\[1ex]
k_1^2&+&k_2^2&=&2,
\end{array}
\right.}
\]
from where it follows that $H\in(0,1]$ and,
after possibly reindexing,
\begin{equation*}{k_1 = H+\sqrt{1-H^2},\quad k_2 = H-\sqrt{1-H^2}.}\end{equation*}
{Since $H$ is constant, both $k_1$
and $k_2$ are constant, so $\Sigma$ is isoparametric.
The isoparametric surfaces of $\hn3$ are classified
(see~\cite[Theorem 3.14]{cecil-ryan}) and the fact
that $H\in(0,1]$ imply that $\Sigma$ is either an open subset of
a horosphere
or of an equidistant surface to a totally geodesic
plane. However, $k_1^2+k_2^2 = 2$ only
holds when $\Sigma$ is contained in a horosphere, which finishes the proof
of the theorem.}
\end{proof}

\begin{remark} \label{rem-cmcsoliton}
Since \eqref{eq-evolutionequation} holds for
any $\mathcal{G}$-soliton, the proof of Theorem \ref{th-cmctranslator} applies to show that any initial condition of a
$\mathcal{G}$-soliton in $\h^3$ with nonzero constant mean curvature is necessarily an open subset of a horosphere.
\end{remark}

\subsection{Rotational translators} 
In this section, we focus on translators to MCF in $\h^3$ which are
invariant under rotations about the $z$-axis. With this purpose, we
first consider vertical rotational graphs. More precisely, let
$\phi$ be a positive smooth function on an open interval
$I\subset(0,+\infty)$ and
\begin{equation*}
X(\theta,s)=
(s\cos \theta,s\sin \theta,\phi(s)), \,\,\,\, (\theta,s)\in U:=\R\times I\subset\R^2.
\end{equation*}
We shall call $\Sigma=X(U)$ the \emph{rotational vertical graph
determined by} $\phi$, and Lemma~\ref{lem-rotationalODE01} below
provides the equation that $\phi$ satisfies in order for
$\Sigma$ to be a translator to MCF.

\begin{lemma} \label{lem-rotationalODE01}
A vertical rotational graph determined by a smooth function $\phi$ is
a translator to {\rm MCF} in \,$\h^3$
if and only if $\phi$ satisfies the second order {\rm ODE}:
\begin{equation} \label{eq-EDOPsi}
\phi''=-\phi'(1+(\phi')^2)\left(\frac{2s}{\phi^2}+\frac1s\right).
\end{equation}
\end{lemma}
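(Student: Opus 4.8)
The plan is to compute the mean curvature $H$ of the rotational vertical graph $\Sigma = X(U)$ directly and then impose the translator equation \eqref{eq-translatorH301}, i.e.\ $H = \langle p,\eta\rangle$. Since $\Sigma$ is a Euclidean rotational graph, it is convenient to first compute the Euclidean quantities and then pass to the hyperbolic ones via the conversion formula \eqref{eq-MCrelation}, namely $H(p) = z\overbar H(p) + \bar\eta_3(p)$. For the Euclidean graph $z = \phi(s)$ with $s = \sqrt{x^2+y^2}$, the outward unit normal is $\bar\eta = (1+(\phi')^2)^{-1/2}\bigl(-\phi'\cos\theta,\,-\phi'\sin\theta,\,1\bigr)$ (up to sign), so $\bar\eta_3 = (1+(\phi')^2)^{-1/2}$, and the classical formula for the Euclidean mean curvature of a surface of revolution gives $2\overbar H = \dfrac{\phi''}{(1+(\phi')^2)^{3/2}} + \dfrac{\phi'}{s\,(1+(\phi')^2)^{1/2}}$ with the appropriate sign convention matching the chosen $\bar\eta$. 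I would fix the orientation once and carry it consistently throughout; the sign choice is where bookkeeping errors creep in, so I would pin it down by checking against the horosphere or the totally geodesic plane example.

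Next I would compute the right-hand side $\langle p,\eta\rangle$. Writing $\eta = z\bar\eta$ as in Section~\ref{sec-preliminaries} and using the identification $p \leftrightarrow x\partial_x + y\partial_y + z\partial_z$, the hyperbolic inner product is $\langle p,\eta\rangle = \dfrac{1}{z^2}\langle p,\eta\rangle_{\mathrm{eucl}} = \dfrac{1}{z^2}\cdot z\,(x,y,z)\cdot\bar\eta = \dfrac{1}{z}\,(x,y,z)\cdot\bar\eta$. With $(x,y) = (s\cos\theta, s\sin\theta)$ and $z = \phi(s)$, this becomes $\dfrac{1}{\phi}\cdot\dfrac{-s\phi' + \phi}{(1+(\phi')^2)^{1/2}} = \dfrac{1}{(1+(\phi')^2)^{1/2}}\Bigl(1 - \dfrac{s\phi'}{\phi}\Bigr)$. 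Then I substitute everything into \eqref{eq-MCrelation} and set $H = \langle p,\eta\rangle$: on the left, $H = \phi\,\overbar H + \bar\eta_3$, so the translator condition reads
\begin{equation*}
\frac{\phi}{2}\left(\frac{\phi''}{(1+(\phi')^2)^{3/2}} + \frac{\phi'}{s\,(1+(\phi')^2)^{1/2}}\right) + \frac{1}{(1+(\phi')^2)^{1/2}} = \frac{1}{(1+(\phi')^2)^{1/2}}\left(1 - \frac{s\phi'}{\phi}\right).
\end{equation*}
The constant terms $(1+(\phi')^2)^{-1/2}$ cancel on both sides. Multiplying through by $2(1+(\phi')^2)^{3/2}/\phi$ and rearranging isolates $\phi''$, and collecting the two remaining terms over the common factor $\phi'(1+(\phi')^2)$ should produce exactly \eqref{eq-EDOPsi}, namely $\phi'' = -\phi'(1+(\phi')^2)\bigl(\tfrac{2s}{\phi^2} + \tfrac{1}{s}\bigr)$. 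Conversely, reversing these algebraic steps shows that any $\phi$ solving \eqref{eq-EDOPsi} yields a translator, giving the ``if and only if''.

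The only genuine obstacle here is sign and orientation bookkeeping: the mean curvature, the normal, and the inner product $\langle p,\eta\rangle$ all change sign under $\eta\mapsto-\eta$, and \eqref{eq-translatorH301} is orientation-invariant only because both sides flip together, so I must ensure the $\overbar H$ formula, the expression for $\bar\eta_3$, and the sign of $s\phi'/\phi$ term in $\langle p,\eta\rangle$ all correspond to the \emph{same} choice of $\bar\eta$. Once that is fixed the computation is a routine, if slightly tedious, simplification; I would double-check the final ODE by verifying that the horosphere $\phi \equiv$ const does \emph{not} solve it (consistent with horospheres not being rotational about the $z$-axis in the relevant sense) and that the equation has the expected qualitative behavior, e.g.\ $\phi' = 0$ forces $\phi'' = 0$, so a critical point of $\phi$ is degenerate — reflecting the rotational symmetry at the ``bottom'' of a catenoid-type translator.
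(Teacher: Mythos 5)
Your computation follows the paper's proof essentially verbatim: the same Euclidean unit normal, the same classical formula for the Euclidean mean curvature of a surface of revolution, conversion to $H$ via \eqref{eq-MCrelation}, the identity $\langle p,\eta\rangle=\frac{\rho}{\phi}(\phi-s\phi')$, and the same cancellation and rearrangement yielding \eqref{eq-EDOPsi}; the argument is correct. One correction to your proposed sanity check: a constant $\phi$ \emph{does} solve \eqref{eq-EDOPsi} (both sides vanish, as your own observation that $\phi'=0$ forces $\phi''=0$ shows), and it must, since the horizontal horosphere $\mathscr H_h$ is rotationally invariant about the $z$-axis and is a translator --- this is the paper's own example and item (i) of Lemma~\ref{lem-rotationalODE02} --- so your expectation that constants fail the ODE is mistaken.
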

\begin{proof}
For a rotational graph $\Sigma$ as above,
a direct computation gives that
\[
\bar\eta:=(\bar\eta_1,\bar\eta_2,\bar\eta_3)
=\rho(-\phi'\cos \theta,-\phi'\sin \theta,1),
\quad \rho:=\frac{1}{\sqrt{1+(\phi'))^2}},
\]
is a unit normal with respect to the induced Euclidean metric,
and that the corresponding Euclidean mean curvature is
\[
\overbar H=\frac{\rho}{2}\left(\frac{\phi''}{1+(\phi')^2}+\frac{\phi'}{s}\right).
\]
Thus, from \eqref{eq-MCrelation}, the mean curvature $H$ of $\Sigma$ in $\h^3$ with respect to $\eta:=\phi\bar\eta$ is
\begin{equation} \label{eq-Hrotationaltranslator}
H=\phi\overbar H+\bar\eta_3
=\rho\left(
\frac{\phi}{2}\left(\frac{\phi''}{1+(\phi')^2}+\frac{\phi'}{s}\right)+1\right).
\end{equation}
It is also straightforward to see that the equality
\begin{equation} \label{eq-Xeta}
\langle X,\eta\rangle=\frac{\rho}{\phi}(\phi-s\phi')
\end{equation}
holds everywhere on $\Sigma.$

From \eqref{eq-Hrotationaltranslator} and \eqref{eq-Xeta}, we conclude that
equation \eqref{eq-translatorH301} for the vertical graph
$\Sigma$ is equivalent to the second order ODE:
\begin{equation*}
\phi''=-\phi'(1+(\phi')^2)\left(\frac{2s}{\phi^2}+\frac1s\right),
\end{equation*}
which proves the lemma.
\end{proof}

Our next arguments will rely on qualitative
analysis of ordinary differential equations for
establishing some properties of the
solutions to~\eqref{eq-EDOPsi}.

\begin{lemma} \label{lem-rotationalODE02}
For any $s_0,z_0>0$ and any $\lambda\in\R,$ the initial value problem
\begin{equation} \label{eq-cauchyproblem}
\left\{
\begin{array}{l}
f''=-f'(1+(f')^2)\left(\frac{2s}{f^2}+\frac1s\right)\\[1ex]
f(s_0)=z_0\\[1ex]
f'(s_0)=\lambda
\end{array}
\right.
\end{equation}
has a unique smooth solution $\phi$ on
$[s_0,+\infty)$ which has the following properties:
\begin{itemize}[parsep=1ex]
\item[\rm (i)] $\phi$ is constant if $\lambda=0.$
\item[\rm (ii)] $\phi$ is increasing, concave and bounded above by a positive constant if $\lambda>0.$
\item[\rm (iii)] $\phi$ is decreasing, convex and bounded below by a positive constant if $\lambda<0.$
\end{itemize}
\end{lemma}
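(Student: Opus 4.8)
The plan is to treat the ODE in \eqref{eq-cauchyproblem} as an autonomous-in-spirit problem by exploiting that the right-hand side is linear in $f'$ up to the factor $(1+(f')^2)$. First I would establish \emph{local} existence and uniqueness from the standard Picard–Lindelöf theorem, since the right-hand side is smooth in $(f,f',s)$ on the region $\{f>0,\,s>0\}$; the claim $(i)$ that $\phi\equiv z_0$ when $\lambda=0$ is then immediate, because the constant function solves the IVP and uniqueness forbids any other solution. In particular, for $\lambda\neq 0$ the solution can never have $\phi'=0$ at any interior point of its maximal interval, since that would force it (again by uniqueness) to coincide with a constant solution; hence $\phi'$ keeps the sign of $\lambda$ throughout, giving monotonicity. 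This also shows $\phi$ stays positive as long as it is defined: if $\lambda>0$ then $\phi$ is increasing, so $\phi\ge z_0>0$; if $\lambda<0$ then $\phi$ is decreasing, and one must argue it cannot reach $0$ in finite $s$ (see below).

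Next I would extract the convexity/concavity. Rewrite \eqref{eq-EDOPsi} as
\[
\frac{\phi''}{\phi'(1+(\phi')^2)} = -\left(\frac{2s}{\phi^2}+\frac1s\right),
\]
whose right-hand side is strictly negative on the relevant domain. When $\lambda>0$ we have $\phi'>0$, so $\phi''<0$, i.e.\ $\phi$ is concave; when $\lambda<0$ we have $\phi'<0$, so $\phi''>0$, i.e.\ $\phi$ is convex. This is case $(ii)$'s concavity and case $(iii)$'s convexity. Moreover, the displayed identity integrates explicitly: setting $u=\phi'$, the left side is $\frac{d}{ds}\left(\log|u| - \tfrac12\log(1+u^2)\right) = \frac{d}{ds}\log\frac{|u|}{\sqrt{1+u^2}}$, while the right side integrates to $-\log(s^2\phi^2)' $-type expressions are not quite closed form because $\phi$ appears, but the \emph{inequality} $\phi''/(\phi'(1+(\phi')^2)) \le -1/s$ is enough: integrating from $s_0$ to $s$ gives
\[
\log\frac{|\phi'(s)|}{\sqrt{1+\phi'(s)^2}} - \log\frac{|\lambda|}{\sqrt{1+\lambda^2}} \le \log\frac{s_0}{s},
\]
so $\dfrac{|\phi'(s)|}{\sqrt{1+\phi'(s)^2}} \le \dfrac{|\lambda| s_0}{s\sqrt{1+\lambda^2}}$, hence $|\phi'(s)| \le C/s$ for a constant $C=C(\lambda,s_0)$, for all $s$ in the maximal interval of existence. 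This decay of $\phi'$ is the crucial quantitative estimate.

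From $|\phi'(s)|\le C/s$ I would close out everything. Global existence on $[s_0,+\infty)$: the a priori bound $|\phi'|\le C/s$ together with $\phi$ monotone and — in the decreasing case — bounded below away from $0$ (because $\int_{s_0}^\infty C/s\,ds$ diverges only logarithmically, while one checks more carefully that $\phi(s)\ge z_0 - \int_{s_0}^s |\phi'|\,d\sigma$ and the tighter bound $|\phi'(s)| \le \frac{|\lambda| s_0}{\sqrt{1+\lambda^2-\lambda^2 s_0^2/s^2}}\cdot\frac1s$ forces $\phi'(s)\to 0$ fast enough — indeed one must show $\phi$ cannot hit $0$; if it approached some $z_\infty\ge 0$, the estimate self-improves since $2s/\phi^2$ blows up and drives $\phi'\to 0$ even faster, preventing finite-$s$ vanishing) means the solution does not blow up and does not leave the domain $\{f>0\}$ in finite time, so it extends to all of $[s_0,+\infty)$. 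Boundedness: in the increasing case, $\phi$ is increasing and $\phi(s) = z_0 + \int_{s_0}^s \phi'(\sigma)\,d\sigma$ with $0<\phi'(\sigma)$ — to get a \emph{finite} upper bound one uses the sharper estimate coming from keeping the $2s/\phi^2$ term: from $\phi''\le -\phi'(1+(\phi')^2)\cdot\frac{2s}{\phi^2} \le -\frac{2s\phi'}{\phi^2}$ one gets $(\,\text{something like }\,1/\phi\,)'' \ge \dots$, more cleanly $\frac{d}{ds}\big(\tfrac{1}{\phi}\big) = -\phi'/\phi^2$ and $\frac{\phi''}{\phi'} \le -\frac{2s}{\phi^2}$ gives $(\log\phi')' \le -2s(1/\phi)'\cdot(-1) $ — the cleanest route is: $\phi'' \le -\frac{2s\,\phi'}{\phi^2}$, so $\phi^2\phi'' + 2s\phi' \le 0$; this is not an exact derivative, but combined with $|\phi'|\le C/s$ we get $\phi'' \le -\frac{2s}{\phi^2}\cdot\phi'$ and since $\phi\ge z_0$, $\phi''\le -\frac{2s\phi'}{(\text{const}+\int C/\sigma)^2}$; iterating the bound $|\phi'|\le C/s$ once more upgrades integrability and shows $\int_{s_0}^\infty \phi' <\infty$. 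The convex case is symmetric, with the lower bound following from the same self-improving argument.

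The main obstacle will be the last step: converting the soft sign/monotonicity information into the \emph{quantitative} assertions that $\phi$ is bounded (above, in case $(ii)$; below by a positive constant, in case $(iii)$) and that the solution exists for all $s\ge s_0$. The naive bound $|\phi'|\le C/s$ only gives $|\phi(s)-z_0| = O(\log s)$, which is not good enough; one genuinely needs to feed $|\phi'|\le C/s$ back into the $2s/\phi^2$ term to get a faster, integrable decay rate for $\phi'$, and to rule out $\phi\to 0$ in the decreasing case via a barrier or self-improvement argument exploiting that $2s/\phi^2\to\infty$ makes $\phi''$ increasingly dominant in the stabilizing direction. I expect this bootstrap to be the technical heart of the proof; everything else (local existence, sign of $\phi'$, sign of $\phi''$) is routine.
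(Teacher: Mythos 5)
Your soft analysis (local existence and uniqueness, item (i), the absence of critical points for $\lambda\neq 0$ via uniqueness against constant solutions, the sign of $\phi''$ from the sign of $\phi'$, and global existence from monotonicity plus convexity/concavity) matches the paper's proof and is fine. The genuine gap is exactly where you place it yourself: the quantitative bounds in (ii) and (iii). You discard the term $2s/\phi^2$ and integrate only $\phi''/(\phi'(1+(\phi')^2))\le -1/s$, obtaining $|\phi'|\le C/s$, which as you note gives only $|\phi(s)-z_0|=O(\log s)$; the subsequent ``bootstrap'' is never actually carried out, and in the decreasing case it cannot even be started as written, because the first-pass lower bound $\phi(s)\ge z_0-C\log(s/s_0)$ becomes vacuous for large $s$, so you have no upper bound on $1/\phi^2$ to feed back into the equation. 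Your remark that ``$2s/\phi^2$ blows up and drives $\phi'\to 0$ even faster'' is circular in that form: it presupposes control on $\phi$ from below, which is precisely what is to be proved.

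The single missing idea, which the paper isolates as a separate claim, is the elementary inequality (AM--GM in the variable $s$)
\[
\frac{2s}{\phi^2}+\frac1s\;\ge\;\frac{2\sqrt2}{\phi},
\]
valid for all $s,\phi>0$. This replaces the $s$-dependence of the right-hand side by $\phi$-dependence, so that the differential inequality becomes
\[
\frac{\phi''}{1+(\phi')^2}\;\le\;-2\sqrt2\,\frac{\phi'}{\phi}\qquad(\lambda>0),
\]
with \emph{both} sides exact derivatives: integrating over $[s_0,s]$ gives
$\arctan(\phi'(s))-\arctan(\lambda)\le -2\sqrt2\log\bigl(\phi(s)/z_0\bigr)$, hence
$2\sqrt2\log(\phi/z_0)\le\arctan(\lambda)-\arctan(\phi')<\pi/2$, and $\phi$ is bounded above with no iteration at all. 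For $\lambda<0$ the reversed inequality yields $2\sqrt2\log(\phi/z_0)\ge\arctan(\lambda)-\arctan(\phi')>\arctan(\lambda)$, i.e.\ $\phi>z_0e^{\arctan(\lambda)/(2\sqrt2)}>0$, which is the positive lower bound you were unable to reach. In short: the boundedness of $\arctan$ is traded directly for the boundedness of $\log\phi$; keeping the $1/s$ term alone loses exactly the factor of $\phi$ in the denominator that makes this work.
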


\begin{proof}
For $\Omega :=(0,+\infty)\times(0,+\infty)\times\R$, since the function
\begin{equation*}
(u,v,w) \in \Omega \mapsto
-w(1+w^2)\left(\frac{2u}{v^2}+\frac1u\right)\end{equation*}
is $C^\infty$ in $\Omega,$ the standard results on solutions for
 ODE's ensure the existence and uniqueness of a $C^\infty$ solution $\phi$
 defined in a maximal interval $I_{\rm max}:=[s_0,x_{\rm max}), \,x_{\rm max}\le+\infty,$
in the sense that the equality
 \begin{equation} \label{eq-phi''proof}
 \phi''=-\phi'(1+(\phi')^2)\left(\frac{2x}{\phi^2}+\frac{1}{x}\right)
 \end{equation}
 holds in $I_{\rm max}.$

 If $\lambda=0,$ it is clear from \eqref{eq-phi''proof} that the solution
 $\phi$ is constant, in which case $x_{\rm max}=+\infty.$ This proves (i).

 Assume now that $\lambda>0.$ Then, $\phi$ is increasing near $s_0.$ Also, from
 property (i) and the uniqueness of solutions,
 $\phi$ has no critical points. Hence, $\phi$ is increasing in $I_{\rm max}.$
 In addition, equality \eqref{eq-phi''proof}
 gives that $\phi$ is concave in $I_{\rm max},$ which yields $x_{\rm max}=+\infty.$

To prove that $\phi$ is bounded above we will make use of an
estimate that will be recurrent in the arguments of this section,
for which reason it is presented separately (without
proof due to its simplicity).

\begin{claim}\label{claimfuv}
For a given $v>0$, let $f_v\colon (0,\infty)\to \R$ be defined as
$f_v(u) = \frac{2u}{v^2} + \frac{1}{u}$. Then $f_v$ has a unique
critical point at $u = v/\sqrt{2}$, being
decreasing in $(0,v/\sqrt{2}]$
and increasing in $[v/\sqrt{2},+\infty)$. In particular, for
any $u,v>0$,
\begin{equation}\label{estimatefuv}
\frac{2u}{v^2} + \frac{1}{u} \geq \frac{2\sqrt{2}}{v}.
\end{equation}
\end{claim}
Using estimate~\eqref{estimatefuv} in~\eqref{eq-phi''proof}, we obtain
that
\begin{equation*}
\frac{\phi''}{1+(\phi')^2}\leq-\phi'\frac{2\sqrt{2}}{\phi}.
\end{equation*}
Thus, for $s>s_0$ we may integrate over $[s_0,s]$ to obtain
\begin{equation*}
\arctan(\phi')-\arctan(\lambda)\leq-2\sqrt{2}\log\left(\frac{\phi}{z_0}\right),
\end{equation*}
so
\begin{equation*}
2\sqrt{2}\log\left(\frac{\phi}{z_0}\right)\leq
\arctan(\lambda)-\arctan(\phi') < \frac{\pi}{2},
\end{equation*}
which implies that $\phi$ is bounded above, therefore proving (ii).

To prove (iii), we can argue as in the proof of (ii) to conclude
that $\phi$ is decreasing and convex in $I_{\rm max}$ if $\lambda<0.$
Once again we may use~\eqref{estimatefuv} in~\eqref{eq-phi''proof}, observing
that in this situation $\phi'<0$ to obtain
\begin{equation*}
\frac{\phi''}{1+(\phi')^2}
\geq
-\phi'\frac{2\sqrt{2}}{\phi},
\end{equation*}
and proceed as in the previous case to arrive in
\begin{equation*}
2\sqrt{2}\log\left(\frac{\phi}{z_0}\right)\geq
\arctan(\lambda)-\arctan(\phi')> \arctan(\lambda),
\end{equation*}
so
\begin{equation*}
\phi > z_0e^{\frac{\arctan(\lambda)}{2\sqrt{2}}},
\end{equation*}
proving (iii) and finishing the proof of the lemma.
\end{proof}

Lemmas~\ref{lem-rotationalODE01} and~\ref{lem-rotationalODE02}
already imply the existence of rotational translators. However,
to improve the description of these examples, we next
consider rotational surfaces which are also horizontal
graphs. More precisely, given a rotational surface
$\Sigma\subset\hn3$ with axis
$\ell:=\{(0,0)\}\times(0,+\infty)$, let us consider
$\gamma = \Sigma\cap \{x = 0\}$ as the profile
curve of $\Sigma$ and assume that
the tangent plane of $\Sigma$ at a given point $p\in \gamma$
is not orthogonal to $\ell$. If we let
$d$ denote the Euclidean distance function from $\gamma$ to
$\ell$ on $\R_+^3$ and let the $z$ coordinate parameterize $\gamma$, then,
in a neighborhood of $p,$ $\Sigma$ can be parameterized as
\begin{equation*}
X(x,z):=(x,\sqrt{d^2(z)-x^2},z), \,\,\, (x,z)\in U\subset\R\times(0,+\infty).
\end{equation*}
We shall call $X(U)$ the \emph{horizontal rotational graph determined by $d$.}

\begin{lemma} \label{lem-rotationalODE021}
A horizontal rotational graph determined by a smooth function $d$ is
a translator to MCF in $\h^3$ if and only if the function $d$
satisfies the ODE:
\[
d''=(1+(d')^2)\left(\frac{2d}{z^2}+\frac{1}{d}\right)\cdot
\]
In particular, such a solution $d$ is strictly convex.
\end{lemma}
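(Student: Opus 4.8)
The plan is to mimic the computation carried out in the proof of Lemma~\ref{lem-rotationalODE01}, but now for the horizontal rotational graph parametrization $X(x,z)=(x,\sqrt{d^2(z)-x^2},z)$. First I would compute the Euclidean unit normal $\bar\eta$ of $\Sigma=X(U)$ and the Euclidean mean curvature $\overbar H$ directly from this parametrization; it is convenient to observe that $\Sigma$ is the level set $\{F=0\}$ of $F(x,y,z)=x^2+y^2-d^2(z)$ on $\R_+^3$, so that $\bar\eta$ is parallel to $\nabla F=(2x,2y,-2dd')$ and $\overbar H$ can be extracted from $\operatorname{div}(\nabla F/|\nabla F|)$. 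Then I would invoke the relation \eqref{eq-MCrelation}, namely $H=z\overbar H+\bar\eta_3$ with $\eta=z\bar\eta$, to get the hyperbolic mean curvature, and separately compute $\langle X,\eta\rangle$ using $\xi(p)=p$. Imposing the translator equation \eqref{eq-translatorH301}, $H=\langle X,\eta\rangle$, and simplifying (clearing the common factor $1/\sqrt{1+(d')^2}$ and the powers of $d$ and $z$ that appear) should yield exactly the stated ODE
\begin{equation*}
d''=(1+(d')^2)\left(\frac{2d}{z^2}+\frac1d\right).
\end{equation*}
The rotational symmetry guarantees the equation is independent of $x$, which is a useful consistency check; if $x$ fails to cancel I have made an algebra error.

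Once the ODE is established, the final assertion that $d$ is strictly convex is immediate: the right-hand side is a product of $(1+(d')^2)\ge 1>0$ with $\frac{2d}{z^2}+\frac1d$, and since $d>0$ and $z>0$ this second factor is strictly positive (indeed by Claim~\ref{claimfuv} it is bounded below by $2\sqrt2/z>0$). Hence $d''>0$ on the whole interval of definition, so $d$ is strictly convex.

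The main obstacle I anticipate is purely computational: deriving the second fundamental form (equivalently $\overbar H$) for the graph $y=\sqrt{d^2(z)-x^2}$ is messier than for the vertical graph $z=\phi(s)$ treated in Lemma~\ref{lem-rotationalODE01}, because the function $\sqrt{d^2(z)-x^2}$ is not radial in the base variables and its derivatives in $x$ and $z$ are genuinely different. The cleanest route, and the one I would actually write up, is to avoid the explicit graph formula and instead work with the implicit description $F=x^2+y^2-d^2(z)=0$: compute $|\nabla F|$, then $\overbar H=\tfrac12\operatorname{div}(\nabla F/|\nabla F|)$ restricted to $\Sigma$, which keeps the algebra symmetric in $x$ and $y$ and lets the rotational invariance do the bookkeeping. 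After that, matching with $\langle X,\eta\rangle$ and reading off the ODE is routine, and the convexity statement follows at once as above. One should also note the sign of $\bar\eta_3$ (it is $-dd'/|\nabla F|$ up to normalization, so its sign matches $-d'$), and choose the orientation consistently with $\eta=z\bar\eta$ so that the final equation comes out with the displayed signs rather than their negatives.
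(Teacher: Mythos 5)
Your proposal follows essentially the same route as the paper's proof: compute the Euclidean mean curvature of the rotational surface (the paper does this via the explicit graph $y=\varphi(x,z)=\sqrt{d^2(z)-x^2}$ and the operator $\Lambda$, you via the level set $F=x^2+y^2-d^2(z)$ — a harmless repackaging of the same algebra), then apply \eqref{eq-MCrelation} and the soliton equation \eqref{eq-translatorH301} and simplify, with the strict convexity being the same one-line positivity observation. The one caution is that with the paper's convention $A=-\ol\nabla\eta$ the Euclidean mean curvature with respect to $\nu=\nabla F/\vert\nabla F\vert$ is $\overbar H=-\tfrac12\operatorname{div}(\nu)$ rather than $+\tfrac12\operatorname{div}(\nu)$, and this sign cannot be absorbed by reversing the orientation (both sides of $H=\langle X,\eta\rangle$ flip together), so it must be fixed in the formula itself — e.g.\ by checking against the horosphere, where $H=1$ with respect to the upward normal.
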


\begin{proof}
Writing $\varphi(x,z):=\sqrt{d^2(z)-x^2},$
we have that a Euclidean unit normal to $\Sigma$ is
\[
\bar\eta:=(\bar\eta_1,\bar\eta_2,\bar\eta_3)
=\rho(-\varphi_x,1,-\varphi_z), \quad
\rho:=\frac{1}{\sqrt{1+\varphi_x^2+\varphi_z^2}},
\]
and the corresponding Euclidean mean curvature is
\[
\overbar H(X(x,z))=\frac{\rho^3(x,z)}{2}\Lambda(x,z),
\]
where $\Lambda$ is the function
\[
\Lambda:=\varphi_{xx}(1+\varphi_z^2)-2\varphi_{xz}\varphi_x\varphi_z
+\varphi_{zz}(1+\varphi_x^2).
\]
Hence, the hyperbolic mean curvature $H$ of $\Sigma$ is
\begin{equation} \label{eq-Hhorizontalgraph}
H=v\overbar H+\bar\eta_3=\rho\left(\frac{z\rho^2}{2}\Lambda-\varphi_z\right),
\end{equation}
and its hyperbolic unit normal is $\eta:=z\bar\eta,$ so that
\begin{equation} \label{eq-Xetahorizontalgraph}
\langle X,\eta\rangle=\frac{\rho}{z}(\varphi-x\varphi_x-z\varphi_z).
\end{equation}

From \eqref{eq-Hhorizontalgraph} and \eqref{eq-Xetahorizontalgraph},
after noticing that
$\varphi_x=\frac{-x}{\varphi}$, we have that
the translating soliton equation $\langle X,\eta\rangle=H$
for $\Sigma$ is equivalent to
\begin{equation} \label{eq-lambda1}
\Lambda=\frac{2d^2}{z^2\varphi\rho^2}\cdot
\end{equation}

After taking all first and second order partial derivatives
of $\varphi$ and applying to
$\Lambda,$ we get from a direct and long calculation that
\begin{equation} \label{eq-lambda2}
\Lambda=\frac{d^2}{\varphi^3}(dd''-(d')^2-1).
\end{equation}

Finally, observing that
\[
\frac{\varphi^2}{\rho^2}=\varphi^2(1+\varphi_x^2+\varphi_z^2)
=\varphi^2\frac{x^2+(dd')^2+\varphi^2}{\varphi^2}=d^2(1+(d')^2),
\]
it follows from \eqref{eq-lambda1} and \eqref{eq-lambda2} that
\[
d''=\left(\frac{2d^2}{z^2}+1\right)\frac{1+(d')^2}{d}\,,
\]
as we wished to prove.
\end{proof}

Now, we are in position to prove the existence of
properly embedded annular translators to MCF
in $\h^3,$ which we shall
call \emph{translating catenoids}, see
Figure~\ref{fig-translatingcatenoid}.

\begin{figure}[htbp]
\includegraphics[width=\textwidth]{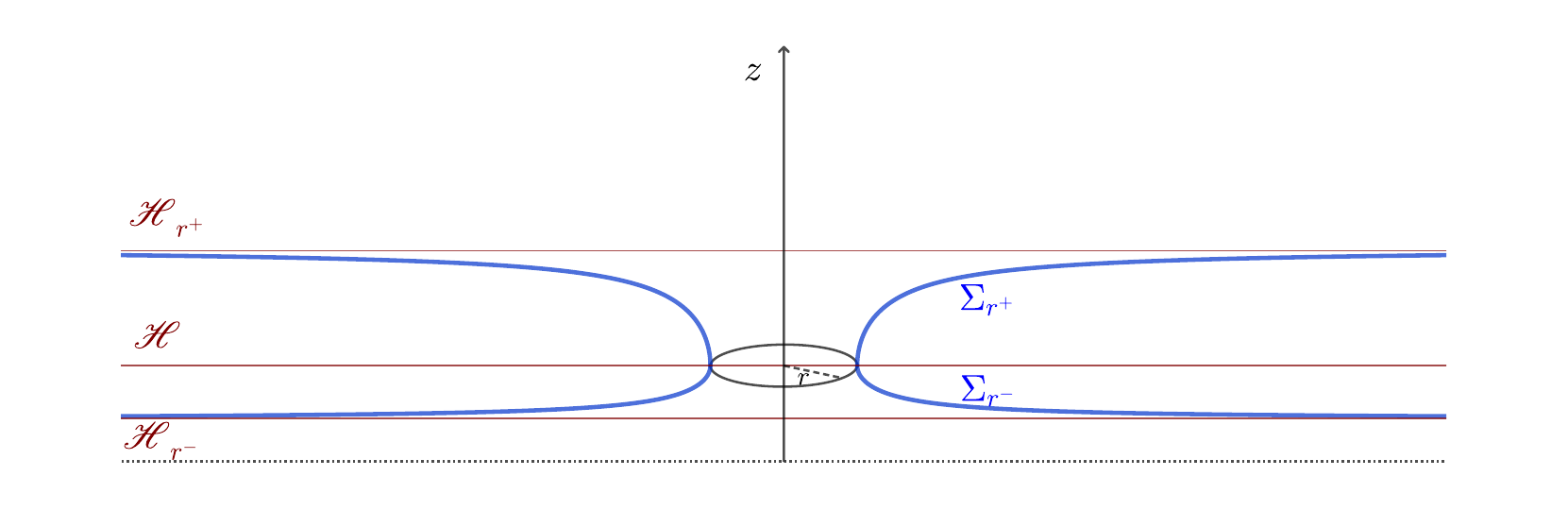}
\caption{\small {{The profile curve of a translating catenoid $\Sigma_r$ in $\h^3$
bounded by (and asymptotic to)
two horospheres $\mathscr H_{r^-}$ and
$\mathscr H_{r^+}$. $\Sigma_r\setminus \mathscr H$ decomposes
as two vertical graphs $\Sigma_r^-$ and $\Sigma_r^+$ over the complement
of the Euclidean disk of radius $r$ centered at the rotation
axis $z$ in the horosphere $\mathscr H$.}}}
\label{fig-translatingcatenoid}
\end{figure}

\begin{theorem} \label{th-translatingcatenoids}
There exists a one-parameter family
$\mathscr C:=\{\Sigma_{r}\mid r>0\}$
of noncongruent, properly embedded rotational
annular translators in $\h^3$ (to be called {\em
translating catenoids}). For
each $r>0$, the surface $\Sigma_r\in \mathscr C$ satisfies:
\begin{itemize}[parsep=1ex]
\item[\rm i)] $\Sigma_{r}$ is contained in a slab determined by
two horospheres $\mathscr H_{r^-}$ and $\mathscr H_{r^+}.$
In particular, the asymptotic boundary of $\Sigma_r$ is the
point at infinity of the horosphere $\mathscr H$ at height $1$.
\item[\rm ii)] $\Sigma_r$ is the union of two vertical graphs $\Sigma_{r}^-$ and
$\Sigma_{r}^+$ over the complement of the Euclidean $r$-disk
\,$\mathcal D_r$ centered at the rotation axis
in the horosphere $\mathscr H$.
\item[\rm iii)] The graphs $\Sigma_{r}^-$ and $\Sigma_{r}^+$ lie in
distinct connected components of \,$\h^3-\mathscr H$
with common boundary the $r$-circle that bounds $\mathcal D_r$ in
$\mathscr H,$ being $\Sigma_{r}^-$
asymptotic to $\mathscr H_{r^-}$ and $\Sigma_{r}^+$
asymptotic to $\mathscr H_{r^+}.$
\end{itemize}

In addition, when $r\to 0$ or when $r\to \infty$,
both $r^+$ and $r^-$ converge to 1 and
the limiting behaviour of $\Sigma_r$ is
as follows:

\begin{itemize}[parsep=1ex]
\item[\rm iv)] As $r\to 0,$ $\Sigma_r$ converges
(on the $C^{2,\alpha}$-norm, on compact sets
outside $(0,0,1)$) to a double copy of
$\mathscr H.$

\item[\rm v)] As $r\to +\infty,$
$\Sigma_r$ escapes to infinity.
\end{itemize}
\end{theorem}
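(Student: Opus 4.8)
The plan is to produce $\mathscr C$ by solving, for each $r>0$, the Cauchy problem attached to the ODE of Lemma~\ref{lem-rotationalODE021}, and then analysing its profile curve both near the waist and near the ends. Concretely, fix $r>0$ and let $d_r$ be the maximal solution of $d''=(1+(d')^2)(2d/z^2+1/d)$ with $d(1)=r$, $d'(1)=0$; since the right-hand side is $C^\infty$ on $\{z>0,\,d>0\}$, standard ODE theory gives a unique smooth solution on a maximal interval $I_r=(z_-(r),z_+(r))\ni 1$, and as $d_r''>0$ the function $d_r$ is strictly convex with $d_r\ge d_r(1)=r>0$ on $I_r$ (so the solution never leaves the region of smoothness), strictly increasing on $[1,z_+(r))$ and strictly decreasing on $(z_-(r),1]$. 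Setting $\theta:=\arctan d_r'\in(-\pi/2,\pi/2)$, one has $\theta(1)=0$ and, by estimate~\eqref{estimatefuv} with $u=d_r(z)$, $v=z$, $\theta'=2d_r/z^2+1/d_r\ge 2\sqrt2/z$; integrating and using $|\theta|<\pi/2$ forces $e^{-\pi/(4\sqrt2)}\le z_-(r)<1<z_+(r)\le e^{\pi/(4\sqrt2)}=:Z_0$ for every $r$.

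\textbf{Behaviour at the endpoints (the main obstacle).} I claim $d_r\to+\infty$ and $d_r'\to+\infty$ as $z\uparrow z_+(r)$, and symmetrically $d_r\to+\infty$, $d_r'\to-\infty$ as $z\downarrow z_-(r)$. Since $z_+(r)$ is finite, if $d_r$ stayed bounded on $[1,z_+(r))$ then by maximality $d_r'$ would be unbounded there, so $\theta\to\pi/2$, i.e.\ the profile arc would have a horizontal limiting tangent; but then, on a left-neighbourhood of $z_+(r)$, the surface would also be a vertical rotational graph whose generating function $\phi=d_r^{-1}$ solves the ODE of Lemma~\ref{lem-rotationalODE01}, extends smoothly up to the endpoint with vanishing derivative there, and hence — by Lemma~\ref{lem-rotationalODE02}(i) — is constant, contradicting $\phi'=1/d_r'>0$. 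Thus $d_r\to\infty$; and if $d_r'$ stayed bounded, then $d_r(z_+(r))=r+\int_1^{z_+(r)}d_r'<\infty$, a contradiction, so $d_r'\to\infty$ too. This is the step I expect to be the crux: the ``bad'' scenario (bounded $d_r$, exploding $d_r'$) is killed precisely by \emph{switching from the horizontal-graph description to the vertical-graph one and invoking the rigidity $\lambda=0\Rightarrow$ constant}.

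\textbf{Construction of $\Sigma_r$ and properties (i)--(iii) and noncongruence.} I would then let $\Sigma_r$ be the surface of revolution of the profile curve $z\mapsto(0,d_r(z),z)$, $z\in I_r$, about the $z$-axis. It is a smoothly embedded, properly embedded cylinder (the profile curve is regular and stays at distance $\ge r$ from the axis, and $d_r\to\infty$ at both ends makes the revolution proper), and a translator by Lemma~\ref{lem-rotationalODE021}. It lies in the slab $\{z_-(r)<z<z_+(r)\}$, giving~(i) with $r^\pm:=z_\pm(r)$; and since its points escape to Euclidean infinity as $z\to z_\pm(r)$, its asymptotic boundary is the single point at infinity of the half-space model, i.e.\ the asymptotic boundary of any horizontal horosphere. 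Because $d_r$ is a bijection of $[1,z_+(r))$ onto $[r,\infty)$, the part $\Sigma_r^+=\Sigma_r\cap\{z\ge1\}$ is the vertical graph $z=\phi_r^+(\rho)$, $\rho=\sqrt{x^2+y^2}\ge r$, over the exterior of $\mathcal D_r$ in $\mathscr H$, likewise $\Sigma_r^-=\Sigma_r\cap\{z\le1\}$, and $\Sigma_r^+\cap\Sigma_r^-=\partial\mathcal D_r$, which gives~(ii) and~(iii) — each $\Sigma_r^\pm$ is asymptotic to $\mathscr H_{r^\pm}$ since $\phi_r^\pm(\rho)\to r^\pm$ and $(\phi_r^\pm)'(\rho)=1/d_r'\to0$ as $\rho\to\infty$. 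Finally, the ratio of the radius to the height of the (unique, by strict convexity) waist circle of $\Sigma_r$ equals $r$ and is an isometry invariant: any isometry of $\h^3$ carrying one $\Sigma_r$ to another must fix the $z$-axis (the common rotation axis), and every isometry of $\h^3$ fixing the $z$-axis — a rotation or reflection about it, composed with a hyperbolic translation $p\mapsto e^tp$ and possibly the inversion $p\mapsto p/|p|^2$ — scales the radius and the height of the waist by the same factor; hence the $\Sigma_r$ are pairwise noncongruent.

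\textbf{The limiting regimes (iv) and (v).} Multiplying $\theta'\ge 1/d_r$ by $d_r'>0$ and integrating yields $1+(d_r')^2\ge(d_r/r)^2$, hence $d_r'\ge r^{-1}\sqrt{d_r^2-r^2}$ on $[1,z_+(r))$, and integrating once more, $d_r(z)\ge r\cosh\!\big(\tfrac{z-1}{r}\big)$ (and symmetrically below $z=1$). For~(v) this already gives $\Sigma_r\subset\{\sqrt{x^2+y^2}\ge r\}$, so $\Sigma_r$ leaves every compact set as $r\to\infty$; and since $\theta'\ge 2d_r/z^2\ge 2r/Z_0^2$ on $I_r$ while $\theta$ ranges in an interval of length $<\pi$, the length of $I_r$ is $O(1/r)$, so $r^\pm\to1$. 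For~(iv): on a compact $K\subset\h^3\setminus\{(0,0,1)\}$, the bound $d_r(z)\ge r\cosh((z-1)/r)$ shows that for small $r$ the part of $\Sigma_r$ meeting $K$ lies in $\{\sqrt{x^2+y^2}\ge\varepsilon\}$ for some $\varepsilon>0$, where $\rho=d_r(z)$ forces $|z-1|\le r\,\mathrm{arccosh}(\rho/r)\to0$ and $(\phi_r^\pm)'(\rho)\le r/\sqrt{\rho^2-r^2}\to0$, both uniformly on $K$; bootstrapping these $C^1$ bounds in the ODE of Lemma~\ref{lem-rotationalODE01} promotes the convergence of each sheet $\Sigma_r^\pm$ to $\mathscr H$ to $C^{2,\alpha}$ on $K$, so $\Sigma_r$ converges to a double copy of $\mathscr H$. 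The one remaining delicate point is $r^\pm\to1$ as $r\to0$: the naive bound on $(\phi_r^\pm)'$ only barely fails to be integrable at $\rho=\infty$, and one must use the sharper inequality $1+(d_r')^2\ge(d_r/r)^2\exp(2(d_r^2-r^2)/Z_0^2)$ (obtained as above but retaining the $2d/z^2$ term) to see that $(\phi_r^\pm)'$ decays like a Gaussian and $\int_r^\infty(\phi_r^\pm)'\,d\rho\to0$.
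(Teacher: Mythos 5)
Your construction and your proofs of items (i)--(iii) follow essentially the same route as the paper: solve the Cauchy problem for $d_r$ from Lemma~\ref{lem-rotationalODE021}, use strict convexity to split off the waist, pass to the vertical-graph description away from the waist and invoke Lemma~\ref{lem-rotationalODE02} to get completeness and the asymptotic horospheres, with the a priori bound $r^\pm\in[e^{-\pi/(4\sqrt2)},e^{\pi/(4\sqrt2)}]$ being exactly the paper's Claim~\ref{claimunifr}. Your endpoint analysis ($d_r\to\infty$, $d_r'\to\infty$ at $z_\pm(r)$, ruled out via uniqueness against the constant solution of Lemma~\ref{lem-rotationalODE02}(i)) is a contrapositive version of the paper's direct extension of $G_r^\pm$ by Lemma~\ref{lem-rotationalODE02}(ii)--(iii); both work. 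The genuine divergence is in proving $r^\pm\to1$ as $r\to0$. The paper runs a contradiction argument in several steps (Claims~\ref{claimestimateb}--\ref{finalcl}), extracting a sequence with $r_n^+\to h>1$, producing a uniform lower bound $d_{r_n}(h_0)>\delta$, and then showing $\phi_{r_n}'(\delta)\to 0$ to force $\phi_{r_n}(\delta)\to h$, contradicting monotonicity of $d_{r_n}$. You instead integrate $\frac12\frac{d}{dz}\log(1+(d_r')^2)=\frac{1}{z^2}\frac{d}{dz}(d_r^2)+\frac{d}{dz}\log d_r$ to get the pointwise bound $1+(d_r')^2\ge (d_r/r)^2e^{2(d_r^2-r^2)/Z_0^2}$ and then estimate $r^+-1=\int_r^\infty (1/d_r')\,d\rho$ directly, obtaining a quantitative rate $O(r\log(1/r))$; this is cleaner, gives uniform $C^1$ control that feeds straight into item (iv), and I verified the inequality and the integrability (splitting the integral near $\rho=r$, on $[2r,1]$, and on $[1,\infty)$), though you should actually write out that splitting since the near-waist contribution is where the $r\log(1/r)$ comes from. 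Two minor caveats: your noncongruence argument leans on the ``waist'' defined by minimal \emph{Euclidean} distance to the axis, which is not a hyperbolic invariant (the hyperbolic distance to the axis is $\operatorname{arcsinh}(d_r(z)/z)$, minimized where $zd_r'=d_r$, not at $z=1$), and the claim that the inversion $p\mapsto p/|p|^2$ scales the waist radius and height equally is unjustified; this is repairable (e.g., the inversion reverses the Killing field, so it cannot carry one translator of the family to another, and $\Gamma_t$ visibly moves the unique critical point of $d_r$ unless $t=0$), and the paper itself offers no proof of noncongruence, but as written this step is not sound.
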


\begin{proof}
Given $r>0,$ let $d_r:(1-\delta,1+\delta)\rightarrow(0,+\infty)$ be the local solution to
the following initial value problem:
\begin{equation} \label{eq-cauchyproblem02}
\left\{
\begin{array}{l}
f''=(1+(f')^2)\left(\frac{2f}{z^2}+\frac{1}{f}\right), \\[1ex]
f(1)=r,\\[1ex]
f'(1)=0.
\end{array}
\right.
\end{equation}

By Lemma \ref{lem-rotationalODE021}, the rotational horizontal
graph $\Sigma_{r}$ determined by $d_r$ is a translator
to MCF in $\h^3.$ Since $d_r$ is strictly convex,
$z=1$ is a strict local minimum of $d_r$
and $\Sigma_{r}-\mathscr H$ is the union of two disjoint rotational vertical
graphs $\Sigma_{r}^-$ and $\Sigma_{r}^+$
over an open set contained in $\mathscr H-\mathcal D_r.$
Let us index $\Sigma_r^+$ as being the component contained in the
horoball $\{z>1\}$ and let $G_r^+$, $G_r^-$ denote the closure
of the respective generating curves to $\Sigma_r^+$ and $\Sigma_r^-$
in the $yz$ plane.

Lemmas~\ref{lem-rotationalODE01}
and~\ref{lem-rotationalODE02}
apply to $G_r^+$ to show that
there exists an increasing,
concave function $\phi_r \colon [r,+\infty)\to \R$ so that
we may extend $G_r^+$ to assume it is a
complete curve
\begin{equation*}
G_r^+ = \{(s,\phi_r(s))\mid s\geq r\}.
\end{equation*}
On $(r,+\infty)$, $\phi_r$ is smooth and
satisfies~\eqref{eq-phi''proof}, and it holds that
$\lim_{s\to r}\phi_r'(s) = +\infty$, see Figure~\ref{figgraphGr}.
In particular, $\Sigma_{r}^+$ is a vertical graph over
$\mathscr H-\mathcal D_r.$ Also,
item~(ii) of Lemma~\ref{lem-rotationalODE02} implies that
$\phi_r$ is bounded above by a positive constant
$r^+ = \lim_{s\to\infty}\phi_r(s)$, so $\Sigma_r^+$ is
asymptotic to the horosphere $\mathscr H_{r^+}$.

Analogously, Lemmas \ref{lem-rotationalODE01} and
\ref{lem-rotationalODE02} give that $\Sigma_r^-$
can be extended and is asymptotic
to the horosphere $\mathscr H_{r^-}$ of $\h^3$, where
$r^- = \lim_{s\to\infty}\varphi_r(s)$
and $\varphi_r\colon [r,+\infty)\to \R$ is the graphing
function {of the profile curve of} $\Sigma_r^-$ when it is considered as a rotational
vertical graph. Since
$\Sigma_{r}=\ol{\Sigma_{r}^-}\cup\ol{\Sigma_{r}^+},$
we have that $\Sigma_{r}$ is an annular properly
embedded translator to MCF in $\h^3.$
{This proves assertions (i)--(iii).}

\begin{figure}[h]
\centering
\includegraphics[width=0.9\textwidth]{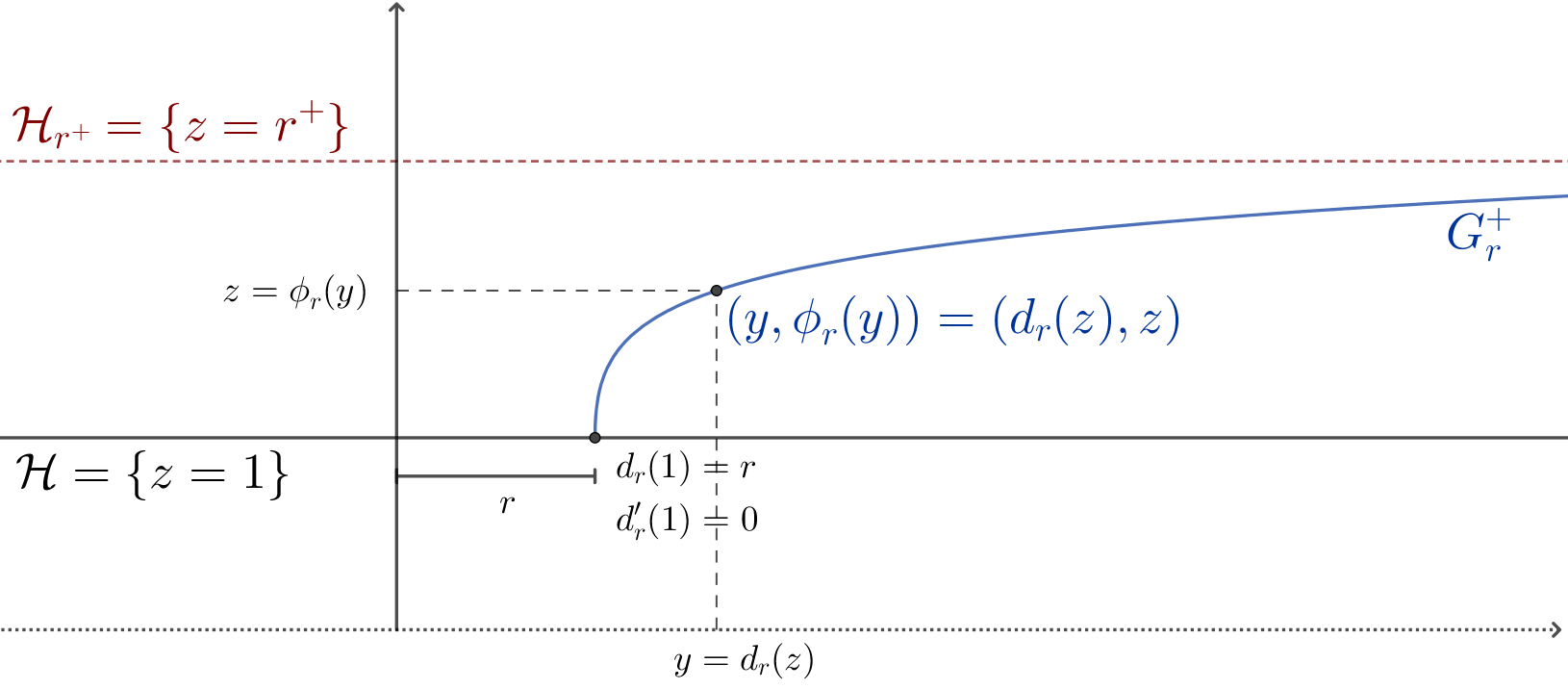}
\caption{In the $yz$ plane, the curve $G_r^+$ can be seen both as
a horizontal graph $z = d_r(x)$ or as a vertical
graph $y = \phi_r(z)$.}\label{figgraphGr}
\end{figure}

In the remainder of the proof, we will use the functions
$\phi_r,\varphi_r$ and $d_r$ as defined above, and we notice that
the previous arguments imply that the maximal interval of
definition of $d_r$ is $(r^-,r^+)$.
For $r^* = 0$ or $\infty$, we will prove that
\begin{equation}\label{limitsrstar}
\lim_{r \to r^*} r^+ = 1 = \lim_{r \to r^*} r^-.
\end{equation}
From there, item (iv) follows
directly, since $\Sigma_r$ is a bygraph of functions
that will both converge to $1$ uniformly in the $C^2$ norm
on compact sets that do not contain $(0,0,1)$ (the $C^{2,\alpha}$
convergence can be obtained by the standard theory of elliptic
partial differential equations, after observing that the graphing
functions of $\Sigma_r^+$ and $\Sigma_r^-$ satisfy an equation
as presented in~\eqref{eq:operator}).
We notice that our proof of~\eqref{limitsrstar} follows
a series of steps, each of which will be presented separately.
Our first claim, stated below, shows that $r^+$ is uniformly
bounded.

\begin{claim}\label{claimunifr}
For any $r>0$, $r^+ \leq e^{\frac{\pi}{4\sqrt{2}}}.$
\end{claim}
\begin{proof}[Proof of Claim~\ref{claimunifr}]
Let $r>0$ be given. For any $z\in[1,r^+)$, Claim~\ref{claimfuv} implies
that
$\frac{2d_r}{z^2}+\frac{1}{d_r} \ge \frac{2\sqrt{2}}{z}$.
In particular, since $d_r$ is a solution to~\eqref{eq-cauchyproblem02}, one has
\begin{equation*}
\frac{d_r''}{1+(d_r')^2}\ge \frac{2\sqrt{2}}{z}.
\end{equation*}
Integrating over $[1,z]$, we obtain
\begin{equation*}
\int_1^z\frac{d_r''(t)}{1+(d_r'(t))^2}dt \ge
\int_1^z\frac{2\sqrt{2}}{t}dt.\end{equation*}
Since $d_r'(1) = 0$, it follows that
\begin{equation*}
\arctan(d_ r'(z)) \ge 2\sqrt{2} \log(z)\quad \Longrightarrow\quad
2\sqrt{2} \log(z) \le \frac{\pi}{2}.\end{equation*}
Since $z$ can be chosen arbitrarily close to $r^+$, this proves the claim.
\end{proof}

\begin{claim}\label{limitsrinfty}
With the above notation, it holds that
\begin{equation*}\lim_{r\to +\infty} r^+ = 1 = \lim_{r\to +\infty} r^-.\end{equation*}
\end{claim}
\begin{proof}[Proof of Claim~\ref{limitsrinfty}]
For a given $r>0$, {we have that $d_r(z) \geq r$ for any $z\in(r^-,r^+)$.
This, together with the fact that $d_r$ is a solution to~\eqref{eq-cauchyproblem02}, implies that}
\begin{equation}\label{inequ12}
\frac{d_r''}{1+(d_r')^2} =\frac{2d_r}{z^2}+\frac{1}{d_r}
> \frac{2r}{z^2}\cdot
\end{equation}
As in the proof of Claim~\ref{claimunifr},
given $z\in(1,r^+),$ we may integrate both sides of~\eqref{inequ12}
over $[1,z]$ to obtain
\begin{equation*}
\arctan(d_r'(z))
> -2r\left(\frac{1}{z}-1\right).
\end{equation*}
Passing to the limit when $z\to r^+$, we find the inequality
\begin{equation}\label{ineqfinal1}
\frac{\pi}{2}
\geq 2r\frac{r^+-1}{r^+}.
\end{equation}
By~\eqref{ineqfinal1} and Claim~\ref{claimunifr}, $r(r^+-1)$ is uniformly bounded by a constant, 
from where it follows that $\lim_{r\to + \infty} r^+ =1$ AQUI.

An analogous argument can be used to show that
$\lim_{r\to +\infty} r^- = 1$. Indeed, for $z\in(r^-,1)$, we
may integrate~\eqref{inequ12} over $[z,1]$ and take the limit when $z\to r^-$ to obtain
\begin{equation*}
-\arctan(d_r'(z))
> -2r\left(1-\frac{1}{z}\right) \quad \Longrightarrow\quad
2r\left(\frac{1- r^-}{r^-}\right)
\leq \frac{\pi}{2}.
\end{equation*}
When $r\to +\infty$, since $r^-\in(0,1)$, we have that
the left hand side becomes arbitrarily large,
unless $r^-\to1$.
\end{proof}

Next, we analyze the limits of $r^+$ and $r^-$ when $r \to 0$.

\begin{claim}\label{claimestimateb}
Let $(r_n)_{n\in\N}$ be a sequence so that $\lim r_n = 0$ and for which
$\lim r_n^+ = h>1$. Then, for any $h_0 \in (1, h)$, there exists
$\delta>0$, depending on $h_0$, such that
\begin{equation*}\liminf d_{r_n}(h_0) > \delta.\end{equation*}
\end{claim}
\begin{proof}[Proof of Claim~\ref{claimestimateb}]
Take $h_0\in (1,h)$. For any sufficiently large $n$, the domain of
$d_{r_n}$ contains $[1,h_0]$. Then,
as in the proof of Claim~\ref{claimunifr}, we may use the inequality
$$\frac{2d_{r_n}}{z^2}+\frac{1}{d_{r_n}}
\geq \frac{2\sqrt{2}}{z}$$
to conclude that, for any $z\in[1,h_0]$,
\begin{equation*}\arctan(d_{r_n}'(z)) \geq 2\sqrt{2}\log(z)
\quad \Longrightarrow
\quad
d_{r_n}'(z) \geq \tan\left(2\sqrt{2}\log(z)\right).\end{equation*}
Integrating over $[1,h_0]$, we arrive at
\begin{equation*}d_{r_n}(h_0) - r_n > \int_1^{h_0}\tan\left(2\sqrt{2}\log(z)\right)dz.\end{equation*}
Setting $2\delta = \int_1^{h_0}\tan\left(2\sqrt{2}\log(z)\right)dz>0$
proves the claim.
\end{proof}

As the proof continues,
for $r>0$, we will make use of the functions
$\phi_r \colon [r,+\infty)\to [1,r^+)$,
and recall that, by Lemma~\ref{lem-rotationalODE01},
\begin{equation}\label{edovirada}
\phi_r'' = -\phi_r'\left(1+(\phi_r')^2\right)
\left(\frac{2s}{\phi_r^2}+\frac{1}{s}\right).
\end{equation}
\begin{claim}\label{claimvdm}
Let $\delta>0$ be given. Then, the following inequality holds
for any $r\in (0,\delta)$
\begin{equation}\label{vaidarm}
\arctan(\phi_r'(\delta))
\geq 2\delta\frac{r^+-\phi_r(\delta)}{r^+\phi_r(\delta)}.
\end{equation}
\end{claim}
\begin{proof}[Proof of Claim~\ref{claimvdm}]
For $\delta>r >0$,AQUI as stated, since $\phi_r'$ is
nonnegative,~\eqref{edovirada} gives the inequality
\begin{equation}\label{edoviradaineq}
\frac{-\phi_r''{(s)}}{1+(\phi_r'{(s)})^2}
> 2\delta\frac{\phi_r'{(s)}}{\phi_r^2{(s)}} \,\,\,\, {\forall s>\delta.}
\end{equation}
Then, integrating~\eqref{edoviradaineq} over
$[\delta,s]$ yields
\begin{equation*}
\arctan\left(\phi_r'(\delta)\right)
- \arctan\left(\phi_r'(s)\right)
> 2\delta\frac{\phi_r(s)-\phi_r(\delta)}{\phi_r(s)\phi_r(\delta)} \,\,\,{\forall s>\delta}.
\end{equation*}
When $s\to +\infty$, $\phi_r'(s) \to 0$ and $\phi_r(s) \to r^+$,
proving~\eqref{vaidarm}.
\end{proof}

\begin{claim}\label{limitzerofirst1}
$\displaystyle\lim_{r\to 0} r^+ = 1.$
\end{claim}
\begin{proof}[Proof of Claim~\ref{limitzerofirst1}]
We argue by contradiction and assume that there exists a
sequence $(r_n)_{n\in\N}$, $\lim_{n\to \infty} r_n = 0$,
such that
$\lim_{n\to \infty}r_n^+ = h>1$. Choose any $h_0 \in (1,h)$
and let $\delta$ be given by Claim~\ref{claimestimateb}.
It then follows from Claim~\ref{claimvdm} that,
for any sufficiently large $n\in\N$ such that $r_n<\delta$,
the following inequality holds
\begin{equation}\label{vaidarm2}
\arctan(\phi_{r_n}'(\delta))
\geq 2\delta\frac{r_n^+-\phi_{r_n}(\delta)}{r_n^+\phi_{r_n}(\delta)}.
\end{equation}

Next, AQUI we prove  that $\lim_{n\to \infty} \phi_{r_n}'(\delta) = 0$. To that end,
first notice that $\phi_{r_n}'(\delta)<\phi_{r_n}'(s)$ for all  $s\in(r_n,\delta)$, since
$\phi_{r_n}$ is concave. Therefore,
\begin{equation}\label{tantolabel}
\phi_{r_n}'{(s)}\left(\frac{2s}{\phi_{r_n}^2{(s)}}+\frac{1}{s}\right)
>
\phi_{r_n}'(\delta)\frac{1}{s} \,\,\,\, \forall s\in(r_n,\delta).
\end{equation}

Together with~\eqref{edovirada},~\eqref{tantolabel} implies
\begin{equation*}\arctan(\phi_{r_n}'(s))- \arctan(\phi_{r_n}'(\delta))>
\phi_{r_n}'(\delta)\log\left(\frac{\delta}{s}\right) \,\,\,\, {\forall s\in(r_n,\delta).}
\end{equation*}
By making $s\to r_n$, we finally obtain
\begin{equation}\label{eq688}
\frac{\pi}{2} > \frac{\pi}{2}- \arctan(\phi_{r_n}'(\delta))>
\phi_{r_n}'(\delta)\log\left(\frac{\delta}{r_n}\right).
\end{equation}

Since $\delta$ is fixed and $\lim_{n\to \infty} r_n = 0$, we have from~\eqref{eq688}
that $\lim_{n\to \infty} \phi'_{r_n}(\delta) = 0$, as we wished to prove.
Therefore, taking the limit when $n\to \infty$ in~\eqref{vaidarm2}
yields $\lim_{n\to \infty} \phi_{r_n}(\delta)=h>h_0$.

To finish the proof of the claim, we notice that there exists
$n_0>0$ such that
$$\phi_{r_n}(\delta) = h_n > h_0 \,\,\, \forall  n\geq n_0.$$
Since $d_{r_n}(h_n) = \delta$
and $h_n>h_0$, the fact that $d_{r_n}$ is increasing in $[1,r_n^+)$ implies
that, for all $n\geq n_0$, $d_{r_n}(h_0) <\delta$, contradicting
the defining property of $\delta$.
\end{proof}

Next claim will finish the proof of~\eqref{limitsrstar}.
We will follow the same arguments presented in
Claims~\ref{claimestimateb},~\ref{claimvdm} and~\ref{limitzerofirst1},
and due to the similarity on the reasoning, only the main differences
will be discussed.
\begin{claim}\label{finalcl}
$\displaystyle \lim_{r\to 0} r^- = 1$.
\end{claim}
\begin{proof}[Proof of Claim~\ref{finalcl}]
By contradiction, assume that there exists a sequence $(r_n)_{n\in\N}$
converging to zero for which $\lim r^-_n = h <1$. As in
Claim~\ref{claimestimateb},
fix $h_0 \in (h,1)$ and consider the functions
$d_{r_n}\colon (r_n^-,1]\to [r_n,+\infty)$.
Then, we may use the inequality
$\frac{2d_{r_n}}{z^2}+\frac{1}{d_{r_n}} \geq \frac{2\sqrt{2}}{z}$
to obtain, for any $z\in[h_0,1]$,
\begin{equation*}d_{r_n}'(z) \leq \tan\left(2\sqrt{2}\log(z)\right).\end{equation*}
Integrating over $[h_0,1]$ and setting
$2\delta = -\int_{h_0}^1 \tan(2\sqrt{2}\log(z))dz>0$, we arrive at
\begin{equation}\label{eqqunifes}
\liminf d_{r_n}(h_0) > \delta.
\end{equation}

Next, for given $r>0$, we will
make use of the graphing function
$\varphi_r\colon [r,+\infty)\to(r^-,1]$ to
the profile curve of $\Sigma_r^-$, which by
Lemma~\ref{lem-rotationalODE01} satisfies
\begin{equation}\label{edovirada2}
\varphi_r'' = -\varphi_r'\left(1+(\varphi_r')^2\right)
\left(\frac{2s}{\varphi_r^2}+\frac{1}{s}\right).
\end{equation}
Following the arguments
in Claim~\ref{claimvdm}, for any $s\geq \delta$ we have the
inequality
$\frac{2s}{\varphi_{r_n}^2}+\frac{1}{s}> \frac{2\delta}{\varphi_{r_n}^2}$.
Since $\varphi_{r_n}'(s) <0$ for all $s\geq \delta$,
this allows us to obtain
\begin{equation*}
\frac{\varphi_{r_n}''}{1+(\varphi_{r_n}')^2}
> -\varphi_{r_n}'\frac{2\delta}{\varphi_{r_n}^2}.
\end{equation*}
Integrating over $[\delta, s]$ and making $s\to \infty$ gives
\begin{equation}\label{eqmaisad}
-\arctan(\varphi_{r_n}'(\delta))
\geq
2\delta\left(\frac{1}{r_n^-}-\frac{1}{\varphi_{r_n}(\delta)}\right).
\end{equation}
Next, we will prove that $\lim \varphi_{r_n}'(\delta) = 0$. To do so,
consider $s\in (r_n,\delta)$ and observe that
\begin{equation*}
\frac{\varphi_{r_n}''}{1+(\varphi_{r_n}')^2} =
-\varphi_{r_n}'\left(\frac{2s}{\varphi_{r_n}^2}+\frac{1}{s}\right)
>
-\varphi_{r_n}'(\delta)\frac{1}{s},\end{equation*}
so integrating over $[s,\delta]$ and making $s\to r_n^-$ yields
\begin{equation*}
\frac{\pi}{2}>\arctan(\varphi'_{r_n}(\delta))+\frac{\pi}{2}
>
-\varphi_{r_n}'(\delta)\log\left(\frac{\delta}{r_n^-}\right),\end{equation*}
which proves that $\lim \varphi_{r_n}'(\delta) = 0$. Using this limit
in~\eqref{eqmaisad} then implies that
$\lim \varphi_{r_n}(\delta) = h$, which, analogously to the final step in
Claim~\ref{limitzerofirst1}, contradicts~\eqref{eqqunifes}.
\end{proof}

Together, Claims~\ref{limitsrinfty},~\ref{limitzerofirst1} and~\ref{finalcl}
prove~\eqref{limitsrstar}, finishing the proof of
Theorem~\ref{th-translatingcatenoids}.
\end{proof}

Let $\Sigma$ be a connected rotational translator in $\h^3$
with (possibly empty) boundary.
If $\Sigma$ is not a horosphere,
Lemmas \ref{lem-rotationalODE01} and \ref{lem-rotationalODE021},
together with
the uniqueness of solutions of ODE's with given initial conditions,
imply that the profile curve of $\Sigma$ coincides, up to its boundary, with the profile curve of
some translating catenoid $\Sigma_r$ obtained in
Theorem~\ref{th-translatingcatenoids}.
Therefore, we have the following uniqueness result.

\begin{theorem} \label{th-uniquenesscatenoid}
Any connected rotational translator of \,$\h^3$
is either an open subset of a horosphere or of some translating catenoid.
\end{theorem}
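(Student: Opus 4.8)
The plan is to reduce everything to the profile curve and then run a uniqueness argument for the ODEs of Lemmas~\ref{lem-rotationalODE01} and~\ref{lem-rotationalODE021}. Write $\ell=\{(0,0)\}\times(0,+\infty)$ for the axis and $\gamma=\Sigma\cap P$ for the profile curve of $\Sigma$, where $P$ is a half-plane bounded by $\ell$; then $\gamma$ is a connected regular curve in the quadrant $Q=\{(s,z)\mid s\ge 0,\ z>0\}$ and determines $\Sigma$. The key local remark is that at every interior point $q=(s_0,z_0)$ of $\gamma$ with $s_0>0$ the tangent line of $\gamma$ at $q$ is not parallel to $\ell$ or not orthogonal to $\ell$: in the first case $\gamma$ is, near $q$, a vertical graph $z=\phi(s)$ and $\phi$ satisfies~\eqref{eq-EDOPsi} by Lemma~\ref{lem-rotationalODE01}; in the second case $\gamma$ is, near $q$, a horizontal graph $s=d(z)$ and $d$ satisfies the ODE of Lemma~\ref{lem-rotationalODE021}; and when both charts are available the two descriptions agree. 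By the standard uniqueness theorem for ODEs it follows that through a prescribed point of $Q\setminus\ell$ with a prescribed tangent direction there passes a unique maximal ``rotational translator curve''.

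Next I would dispose of the horosphere alternative. If $\gamma$ lies in a horizontal line $\{z=h\}$, then $\Sigma$ is an open subset of the horosphere $\mathscr H_h$ and we are done. Suppose instead that $\gamma$ meets $\ell$, say at $(0,h)$; near that point $\Sigma$ is a rotationally invariant graph over a disk of the plane $\{z=h\}$, so $\gamma$ is a vertical graph $z=\phi(s)$ on $[0,\varepsilon)$ with $\phi(0)=h$, $\phi'(0)=0$. I claim $\phi\equiv h$. Indeed, $\phi$ solves~\eqref{eq-EDOPsi} on $(0,\varepsilon)$, so — comparing with the constant solution exactly as in the proof of Lemma~\ref{lem-rotationalODE02} — if $\phi$ is nonconstant it has no critical point on $(0,\varepsilon)$, hence $\phi'$ keeps a fixed sign there; but~\eqref{eq-EDOPsi} makes $\phi$ concave when $\phi'>0$ and convex when $\phi'<0$, and in either case the monotone function $\phi'$ would be forced to change sign as $s\to 0^+$, contradicting $\phi'(0)=0$. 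Hence $\phi\equiv h$, and again $\Sigma\subset\mathscr H_h$. From now on we assume $\Sigma$ is not an open subset of a horosphere, so $\gamma$ avoids $\ell$ and is not contained in any horizontal line.

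Then there is an interior point of $\gamma$, off $\ell$, at which $\gamma$ is a vertical graph $z=\phi(s)$ with $\phi'(s_0)=\lambda\ne 0$ (if the tangent of $\gamma$ is vertical there, then in the $s=d(z)$ chart one has $d'=0$, that is, the point is already a waist, and the reasoning below only simplifies). Assuming, say, $\lambda>0$, parts~(ii)--(iii) of Lemma~\ref{lem-rotationalODE02} describe the solution of~\eqref{eq-cauchyproblem} with these data: it is strictly monotone, concave, and bounded, and uniqueness forces $\gamma$ to agree with it on their common interval. Continuing $\gamma$ maximally to the left of $s_0$: since $\gamma$ cannot reach $\ell$, and since $\phi'$ is monotone along the vertical-graph portion, $\gamma$ must develop a vertical tangent at a point $(\sigma,m)$ with $\sigma,m>0$; there we pass to the chart $s=d(z)$ of Lemma~\ref{lem-rotationalODE021}, solving the initial value problem $d(m)=\sigma$, $d'(m)=0$. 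By the strict convexity of $d$, the point $(\sigma,m)$ is the unique minimum of this solution, which is then continued symmetrically and eventually returns to being a vertical graph on both sides, asymptotic to two horizontal horospheres. Comparing with the construction of the translating catenoids in Theorem~\ref{th-translatingcatenoids}, this maximal curve is precisely the profile curve of $\Sigma_r$ with $r=\sigma$, pulled back by the hyperbolic translation $\Gamma_t(p)=e^tp$, $t=\log m$, which normalizes the waist to height $1$; since~\eqref{eq-translatorH301} is invariant under these $\Gamma_t$, ``translating catenoid'' here is to be read modulo this one-parameter isometry group. The case $\lambda<0$ is entirely analogous. In all cases $\gamma$ is a subarc of the profile curve of a translating catenoid, so $\Sigma$ is an open subset of one, which finishes the proof.

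I expect the main obstacle to be the continuation step in the last paragraph: one must check that alternating between the two ODEs of Lemmas~\ref{lem-rotationalODE01} and~\ref{lem-rotationalODE021} along $\gamma$ yields one genuinely smooth curve, that this curve neither closes up nor escapes to the ideal boundary, and that after the normalizing translation it is exactly one of the surfaces $\Sigma_r$ of Theorem~\ref{th-translatingcatenoids} — equivalently, that the (translated) translating catenoids together with the horizontal horospheres realize every admissible initial condition of a rotational translator curve. The monotonicity and boundedness in Lemma~\ref{lem-rotationalODE02} and the strict convexity in Lemma~\ref{lem-rotationalODE021} are precisely what rule out the pathological continuations and pin down the matching catenoid.
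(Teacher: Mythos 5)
Your proposal is correct and follows essentially the same route as the paper, whose entire proof is the short remark preceding the theorem: Lemmas~\ref{lem-rotationalODE01} and~\ref{lem-rotationalODE021} together with uniqueness of solutions of ODEs with given initial conditions force the profile curve to coincide with that of a horosphere or of some translating catenoid $\Sigma_r$. You supply considerably more detail than the paper does --- the chart-switching at the waist, the exclusion of the case where the curve meets the axis, and the normalization of the waist height by the isometry $\Gamma_{\log m}$ (a point the paper leaves implicit, since its family $\mathscr C$ is normalized to have waist at height $1$) --- but the underlying argument is the same.
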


\subsection{Parabolic translators}

Having considered rotational translators in the previous
section, we now look at
translators which are invariant by a 1-parameter
group of {\em parabolic} isometries of $\hn3$, i.e.,
isometries of $\hn3$ that fix
parallel families of horospheres.
Horizontal cylinders over curves on
vertical totally geodesic planes
of $\h^3$ (to be called \emph{parabolic cylinders})
are the simplest examples of surfaces which are invariant by
parabolic translations. When these generating curves are graphs on the whole of $\R,$
such a surface can be parameterized by a map
$X\colon\R^2\to\R_+^3$ defined by
\begin{equation*}
X(x,y)=(x,y,\phi(y)), \,\, (x,y)\in\R^2,
\end{equation*}
where $\phi$ is a smooth positive function on $\R.$
We shall call $\Sigma:=X(\R^2)$ the \emph{parabolic cylinder determined by}
$\phi.$

Defining $\rho(y):=(1+(\phi'(y))^2)^{-1/2},$
we have that
\[
\bar\eta:=\rho(0,-\phi',1)
\]
is a unit normal to $\Sigma$ with respect to the induced Euclidean
metric of $\R_+^3.$ With this orientation, the Euclidean mean curvature
$\overbar H$ of $\Sigma$ is
\[
\overbar H=\frac{\rho^3\phi''}{2}\,\cdot
\]

From this last equality and \eqref{eq-MCrelation}, we have that
the hyperbolic mean curvature $H$ of $\Sigma$ with respect to
the orientation $\eta:=\phi\bar\eta$ is
\[
H=\rho\left(\frac{\rho^2\phi\phi''}{2}+1\right).
\]

Since $\langle\eta,X\rangle=\rho(\phi-y\phi')/\phi,$
we also have that the identity \eqref{eq-translatorH301} for
the parabolic cylinder $\Sigma=X(\R^2)$ is equivalent to the following
second order ODE:
\[
\phi''=-\phi'(1+(\phi')^2)\frac{2y}{\phi^2}\cdot
\]

The above considerations yield

\begin{lemma} \label{lem-parabolicODE01}
A parabolic cylinder determined by a smooth function $\phi$ is
a translator to MCF in \,$\h^3$ if and only if $\phi=\phi(y)$ is a solution to the second order ODE:
\begin{equation} \label{eq-EDOparabolic}
f''=-f'(1+(f')^2)\frac{2y}{f^2}\cdot
\end{equation}
\end{lemma}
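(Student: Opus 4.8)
The plan is to follow exactly the same computational template already used for rotational vertical graphs (Lemma~\ref{lem-rotationalODE01}) and horizontal rotational graphs (Lemma~\ref{lem-rotationalODE021}): set up the parametrization, compute the Euclidean unit normal and Euclidean mean curvature of the parabolic cylinder, convert to the hyperbolic mean curvature $H$ via \eqref{eq-MCrelation}, compute $\langle X,\eta\rangle$, and then observe that the translator equation \eqref{eq-translatorH301} reduces to the stated ODE. In fact, all of the intermediate formulas have already been recorded in the paragraphs immediately preceding the statement of Lemma~\ref{lem-parabolicODE01}, so the proof is essentially an assembly of those identities.

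First I would recall, for $\Sigma=X(\R^2)$ with $X(x,y)=(x,y,\phi(y))$ and $\rho(y):=(1+(\phi'(y))^2)^{-1/2}$, that a Euclidean unit normal is $\bar\eta=\rho(0,-\phi',1)$, that the Euclidean mean curvature is $\overbar H=\rho^3\phi''/2$, and hence, by \eqref{eq-MCrelation} with $z=\phi$ and $\bar\eta_3=\rho$, that the hyperbolic mean curvature with respect to $\eta:=\phi\bar\eta$ is
\[
H=\phi\overbar H+\bar\eta_3=\rho\left(\frac{\rho^2\phi\phi''}{2}+1\right).
\]
Next I would record the identity $\langle\eta,X\rangle=\rho(\phi-y\phi')/\phi$, which follows directly from $\eta=\phi\bar\eta=\rho\phi(0,-\phi',1)$ and $X=(x,y,\phi)$ by taking the hyperbolic inner product (i.e.\ the Euclidean one divided by $\phi^2$): one gets $\langle\eta,X\rangle=\frac{1}{\phi^2}\,\rho\phi(-y\phi'+\phi)=\rho(\phi-y\phi')/\phi$.

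Then the translator equation \eqref{eq-translatorH301}, namely $H=\langle X,\eta\rangle$, becomes
\[
\rho\left(\frac{\rho^2\phi\phi''}{2}+1\right)=\frac{\rho}{\phi}(\phi-y\phi').
\]
Cancelling the common factor $\rho>0$, multiplying through by $\phi$, and simplifying $\frac{\rho^2\phi^2\phi''}{2}+\phi=\phi-y\phi'$ gives $\frac{\rho^2\phi^2\phi''}{2}=-y\phi'$, that is, using $\rho^{-2}=1+(\phi')^2$,
\[
\phi''=-\phi'(1+(\phi')^2)\frac{2y}{\phi^2},
\]
which is precisely \eqref{eq-EDOparabolic}. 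The argument is reversible at each step, so the parabolic cylinder is a translator if and only if $\phi$ solves this ODE.

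There is no real obstacle here; the only mild subtlety is bookkeeping of the orientation (the sign of $\bar\eta_3=\rho$ in \eqref{eq-MCrelation}) and the factor $z=\phi$ appearing both in the conformal factor of the metric and in the passage $\eta=\phi\bar\eta$, exactly as in the rotational cases. Since all the needed formulas are already displayed just above the statement, the cleanest presentation is simply to say ``the above considerations yield'' and let the one-line algebraic reduction $H=\langle X,\eta\rangle\iff$ \eqref{eq-EDOparabolic} stand as the proof.
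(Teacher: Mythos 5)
Your proposal is correct and follows exactly the paper's route: the paper derives all the displayed identities ($\bar\eta$, $\overbar H$, $H$, $\langle\eta,X\rangle$, and the resulting ODE) in the paragraphs immediately preceding the lemma and then states ``The above considerations yield'' the result, which is precisely the assembly you describe. Your algebraic reduction of $H=\langle X,\eta\rangle$ to \eqref{eq-EDOparabolic} checks out.
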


The solutions of \eqref{eq-EDOparabolic} are all increasing on $\R$ and their graphs
are ``S-shaped'', as attested by the following

\begin{lemma} \label{lem-parabolicODE02}
Given $\lambda\ge 0,$ the initial value problem
\begin{equation} \label{eq-parabolicIVP}
\left\{
\begin{array}{l}
f''=-f'(1+(f')^2)\frac{2y}{f^2}\\[1ex]
f(0)=1\\[1ex]
f'(0)=\lambda
\end{array}
\right.
\end{equation}
has a unique smooth solution
$\phi:\R\to(0,+\infty)$ which has the following properties:
\begin{itemize}[parsep=1ex]
\item[\rm i)] $\phi$ is constant if $\lambda=0.$
\item[\rm ii)] $\phi$ is increasing, convex in $(-\infty,0),$ and concave in $(0,+\infty)$ if $\lambda>0.$
\item[\rm iii)] $\phi$ is bounded from above by a positive constant.
\item[\rm iv)] $\phi$ is bounded from below by a positive constant. 
\end{itemize}
\end{lemma}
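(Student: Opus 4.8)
The plan is to follow the template of Lemma~\ref{lem-rotationalODE02}: extract existence, uniqueness and smoothness from the general theory, and then run a qualitative analysis of the orbit based on one integrated form of the equation. Since the map $(u,v,w)\in\R\times(0,+\infty)\times\R\mapsto -w(1+w^2)\tfrac{2u}{v^2}$ is $C^\infty$, the problem~\eqref{eq-parabolicIVP} has a unique $C^\infty$ solution $\phi$ on a maximal interval $I_{\max}=(y_{\min},y_{\max})\ni 0$. Part~(i) is immediate: when $\lambda=0$ the constant $\phi\equiv 1$ solves the problem, and by uniqueness it is the only solution, defined on all of $\R$. Assume henceforth $\lambda>0$. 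If $\phi'(y_0)=0$ at some point, then~\eqref{eq-parabolicIVP} forces $\phi''(y_0)=0$ as well, so comparing with the constant solution $\phi\equiv\phi(y_0)$ would make $\phi$ constant, against $\phi'(0)=\lambda>0$; hence $\phi'$ never vanishes and, being positive at $0$, it is positive on $I_{\max}$, i.e.\ $\phi$ is strictly increasing. Since $\phi'$, $1+(\phi')^2$ and $\phi^2$ are positive, the sign of $\phi''$ in~\eqref{eq-parabolicIVP} is that of $-y$, which makes $\phi$ convex on $(y_{\min},0)$ and concave on $(0,y_{\max})$: this is~(ii).

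For~(iii) and~(iv) the key tool is the identity obtained by dividing the equation in~\eqref{eq-parabolicIVP} by $1+(\phi')^2$, using $\phi'/\phi^2=-(1/\phi)'$, and integrating by parts:
\[
\arctan(\phi'(y))-\arctan(\lambda)=\frac{2y}{\phi(y)}-2\int_0^y\frac{dt}{\phi(t)},
\]
together with its differential form $\tfrac{d}{dy}\log\phi'=-(1+(\phi')^2)\tfrac{2y}{\phi^2}\le-\tfrac{2y}{\phi^2}$ for $y>0$. To prove~(iii) I would argue by contradiction: if $\phi$ were unbounded above, concavity gives $\phi'<\lambda$, hence $\phi<1+\lambda y$ for $y>0$, so $\phi$ has bounded derivative and stays positive, forcing $y_{\max}=+\infty$; moreover $\phi'(y)\to 0$ as $y\to+\infty$, since $\phi'\to c>0$ would give $-\phi''\ge c\,\tfrac{2y}{(1+\lambda y)^2}$, whose integral diverges, and then $\phi'\to-\infty$, which is absurd. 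Choosing $y_1$ with $\phi'(y_1)\le 1$, concavity yields $\phi(y)\le\phi(y_1)+y$ for $y\ge y_1$; plugging this into $\tfrac{d}{dy}\log\phi'\le-\tfrac{2y}{\phi^2}$ and integrating gives $\phi'(y)\le C(\phi(y_1)+y)^{-2}$, which is integrable on $[y_1,+\infty)$ — so $\phi$ would be bounded, a contradiction. (The same reasoning, together with the trivial bounded case, gives $y_{\max}=+\infty$ in all cases, while $\phi<1$ for $y<0$ follows from monotonicity.)

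To prove~(iv) I would fix any $a\in(y_{\min},0)$ and, for $y\in(y_{\min},a)$, split $\int_y^0\tfrac{dt}{\phi(t)}=\int_y^a\tfrac{dt}{\phi(t)}+\int_a^0\tfrac{dt}{\phi(t)}$, bounding the two pieces, via monotonicity of $\phi$, by $\tfrac{|y|-|a|}{\phi(y)}$ and $\tfrac{|a|}{\phi(a)}$ respectively. Since the left-hand side of the displayed identity lies in $(-\pi/2,0)$, this gives $\tfrac{2|a|}{\phi(y)}-\tfrac{2|a|}{\phi(a)}<\tfrac{\pi}{2}$, hence $\phi(y)\ge m_a>0$ with $m_a$ depending only on $\phi(a)$ and $|a|$. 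Thus $\phi$ is bounded below by a positive constant on $(y_{\min},0)$; since $\phi'$ is also bounded there (by $\lambda$), the solution extends past $y_{\min}$, so $y_{\min}=-\infty$ and $\phi$ is globally defined, and the same bound (with $\phi\ge\phi(0)=1$ for $y\ge 0$) yields the global positive lower bound~(iv). The main obstacle I expect is precisely~(iv): a priori the backward orbit could reach the singular locus $\{\phi=0\}$ in finite time, and it is exactly this integral-splitting estimate that both rules that out and supplies global existence to the left; the upper bound~(iii) is softer, needing only the one refinement of the crude linear estimate $\phi<1+\lambda y$.
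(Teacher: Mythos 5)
Your proposal is correct and follows essentially the same route as the paper: existence/uniqueness and part (i) from standard ODE theory, part (ii) from the absence of critical points plus the sign of $y$, and parts (iii)--(iv) from integrating $\bigl(\arctan\phi'\bigr)'=-(1+(\phi')^2)\,2y\phi'/\phi^2$ against the monotonicity of $\phi$. The only notable variation is in (iii), where the paper splits on $c=\lim_{y\to+\infty}\phi'(y)$ being zero or positive and concludes each case with L'H\^opital, whereas you first force $c=0$ and then extract the quantitative decay $\phi'(y)=O\bigl((\phi(y_1)+y)^{-2}\bigr)$, whose integrability bounds $\phi$ directly; both arguments are valid and of comparable length.
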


\begin{proof}
Once again, the existence and uniqueness of local solutions follow
directly from the fact that
\begin{equation*}(u,v,w)\in \R\times(0,\infty)\times\R\mapsto -w(1+(w)^2)\frac{2u}{v^2}\end{equation*}
is smooth. In particular, assertion (i) is immediate.

Let $\phi$ denote a local solution to~\eqref{eq-parabolicIVP} for
some $\lambda>0$, defined in its maximal domain
$I_{\rm max}:=(y_{\rm min},y_{{\rm max}})$,
$-\infty \leq y_{\rm min} < 0 < y_{\rm max} \leq +\infty$.
From uniqueness of solutions, $\phi$ cannot admit any critical point.
Hence, we may argue as in the proof of Lemma~\ref{lem-rotationalODE02}
to observe that $\phi$ is strictly increasing. In particular, the equality
\begin{equation} \label{eq-phi''proof001}
 \phi''=-\phi'(1+(\phi')^2)\frac{2y}{\phi^2}
 \end{equation}
implies that $\phi$ is convex in $(y_{\rm min},0)$ and concave
in $(0,y_{{\rm max}})$, which yields $y_{\rm max} = +\infty$.

Next, we prove that the solution $\phi$ is bounded above.
We will argue by contradiction and assume that
$\lim_{y\to \infty}\phi(y) = + \infty$.

The fact that $\phi'(y) >0$ and $\phi''(y)<0$, for all $y>0$,
implies the existence of some $c \geq 0$ such that
$\lim_{y\to +\infty} \phi'(y) = c$. We will derive a contradiction by ruling  out
the two possible cases: $c = 0$ and  $c>0$.
First, fix any $y_0 > 0$.  It follows from~\eqref{eq-phi''proof001} that
\begin{equation*}
\frac{-\phi''{(y)}}{1+(\phi')^2{(y)}}=2y\frac{\phi'{(y)}}{\phi^2{(y)}}
\geq 2y_0\frac{\phi'{(y)}}{\phi^2{(y)}} \,\,\, {\forall y\geq y_0.}
\end{equation*}
Integrating over $[y_0,y]$ gives that
\begin{equation*}
\arctan(\phi'(y_0))-\arctan(\phi'(y))
\geq
2y_0\left(\frac{1}{\phi(y_0)}-\frac{1}{\phi(y)}\right).
\end{equation*}
Then, taking the limit when $y\to + \infty$,
the hypothesis that $c = 0$ implies that
\begin{equation*}
\frac{2y_0}{\phi(y_0)}\leq \arctan(\phi'(y_0))< \frac{\pi}{2}.
\end{equation*}
However, it follows from L'H\^opital's rule that
\begin{equation*}
\frac{\pi}{2} \geq \lim_{y_0\to + \infty} \frac{2y_0}{\phi(y_0)}
= \lim_{y_0\to + \infty} \frac{2}{\phi'(y_0)} = + \infty,
\end{equation*}
a contradiction.

To rule out the case when $c > 0$, once again we let $y_0>0$ be given
and, for $y \geq y_0$, we use the inequality $1+(\phi'(y))^2 \geq 2\phi'(y)$
in~\eqref{eq-phi''proof001} to obtain
\begin{equation*}
\frac{-\phi''{(y)}}{\phi'{(y)}}=(1+(\phi'{(y)})^2)\frac{2y}{\phi^2{(y)}}
\geq 4y_0\frac{\phi'{(y)}}{\phi^2{(y)}} \,\,\,\, {\forall y\ge y_0}. 
\end{equation*}
Integrating over $[y_0,y]$ and making $y\to + \infty$ gives
\begin{equation*}
\log\left(\frac{\phi'(y_0)}{c}\right)
\geq
\frac{4y_0}{\phi(y_0)} >0,
\end{equation*}
which immediately implies that
$\lim_{y_0 \to + \infty} \frac{y_0}{\phi(y_0)} = 0$. This is a contradiction, since
\begin{equation*}
\lim_{y_0 \to + \infty} \frac{y_0}{\phi(y_0)}=\lim_{y_0 \to + \infty} \frac{1}{\phi'(y_0)}
= \frac{1}{c} > 0.
\end{equation*}
Hence, this proves item iii) of the lemma.

To prove iv), fix any $y_0\in (y_{\rm min},0)$ and, for
$y_{\rm min} <y\leq y_0$, apply~\eqref{eq-phi''proof001} to obtain
\begin{equation*}
\frac{-\phi''{(y)}}{1+(\phi'{(y)})^2}
\leq 2y_0\frac{\phi'{(y)}}{\phi^2{(y)}} \,\,\,\, {\forall y\in (y_{\rm min},y_0),}
\end{equation*}
which implies that
\begin{equation}\label{pranaodizerquenaofaleideflores}
\arctan(\phi'(y))-\arctan(\phi'(y_0))
\leq
2y_0\left(\frac{1}{\phi(y)}-\frac{1}{\phi(y_0)}\right) \,\,\,\, {\forall y\in (y_{\rm min},y_0)}.
\end{equation}
Since the left hand side of~\eqref{pranaodizerquenaofaleideflores}
is bounded from below by $-\pi$ and $y_0<0$,
a simple algebraic manipulation gives
\begin{equation*}
\frac{1}{\phi(y_0)}-\frac{\pi}{2y_0}
\geq
\frac{1}{\phi(y)} \,\,\,\, {\forall y\in (y_{\rm min},y_0),}
\end{equation*}
from which we conclude that $\lim_{y\to y_{\rm min}}\phi(y) >0$.
This shows that $y_{\rm min} = -\infty$ and proves iv).
\end{proof}

Lemmas~\ref{lem-parabolicODE01} and~\ref{lem-parabolicODE02}
immediately give the following result (see Fig.~\ref{fig-grimreaperH3}).

\begin{figure}[h]
\includegraphics[width=0.9\textwidth]{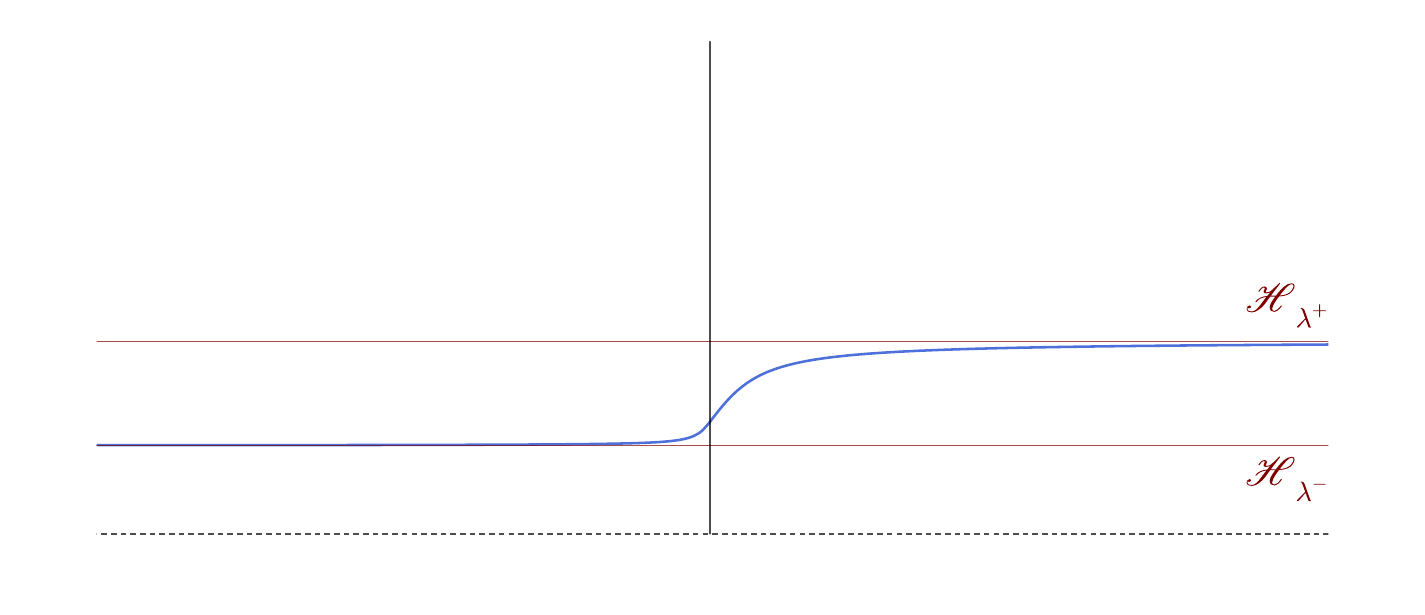}
\caption{\small The profile curve of a hyperbolic grim reaper in $\h^3$
lying between two horospheres $\mathscr H_{\lambda^-}$ and
$\mathscr H_{\lambda^+}$.}
\label{fig-grimreaperH3}
\end{figure}

\begin{theorem} \label{th-grimreapers}
In hyperbolic space $\h^3,$ there exists a one-parameter family
\begin{equation*}\mathscr G:=\{\Sigma_{\lambda}\mid \lambda\in[0,+\infty)\}\end{equation*}
of noncongruent,
complete translators (to be called \emph{grim reapers})
which are horizontal parabolic cylinders
generated by the solutions of \eqref{eq-parabolicIVP}.
$\Sigma_0$ is the horosphere $\mathscr H\subset\h^3$ at height one, and
for $\lambda>0,$
each $\Sigma_{\lambda}\in\mathscr G$ is an entire graph over $\R^2$ which is
contained in a slab determined by two
horospheres $\mathscr H_{\lambda^-}$ and $\mathscr H_{\lambda^+}.$
Furthermore, there exist open sets $\Sigma_{\lambda}^-$ and $\Sigma_{\lambda}^+$ of $\Sigma_\lambda$
such that $\Sigma_{\lambda}^-$ is
asymptotic to $\mathscr H_{\lambda^-}$, $\Sigma_{\lambda}^+$ is asymptotic to $\mathscr H_{\lambda^+},$
and $\Sigma_\lambda={\rm closure}\,(\Sigma_{\lambda}^-)\cup{\rm closure}\,(\Sigma_{\lambda}^+).$
\end{theorem}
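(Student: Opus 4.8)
The plan is to let Lemmas~\ref{lem-parabolicODE01} and~\ref{lem-parabolicODE02} do essentially all of the work, exactly as announced before the statement. For each $\lambda\ge0$, let $\phi_\lambda\colon\R\to(0,+\infty)$ be the unique global solution of the initial value problem \eqref{eq-parabolicIVP} supplied by Lemma~\ref{lem-parabolicODE02}, let $\Sigma_\lambda$ be the parabolic cylinder it determines (parametrized by $X_\lambda(x,y)=(x,y,\phi_\lambda(y))$), and set $\mathscr G:=\{\Sigma_\lambda\mid\lambda\in[0,+\infty)\}$. By Lemma~\ref{lem-parabolicODE01}, each $\Sigma_\lambda$ is a translator to MCF in $\h^3$. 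Since $\phi_\lambda$ is defined on all of $\R$, $\Sigma_\lambda$ is an entire graph over the $(x,y)$-plane; when $\lambda>0$, items~(iii)--(iv) of Lemma~\ref{lem-parabolicODE02} confine $\phi_\lambda$ between two positive constants, so the induced metric $\bigl(dx^2+(1+(\phi_\lambda')^2)\,dy^2\bigr)/\phi_\lambda^2$ dominates a fixed positive multiple of the flat metric on $\R^2$, whence $\Sigma_\lambda$ is complete. For $\lambda=0$, item~(i) gives $\phi_0\equiv1$, so $\Sigma_0=\{(x,y,1)\}$ is the horosphere $\mathscr H$ at height one.

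Fix now $\lambda>0$. By items~(ii)--(iv) of Lemma~\ref{lem-parabolicODE02}, $\phi_\lambda$ is strictly increasing and bounded, so
\[
\lambda^-:=\lim_{y\to-\infty}\phi_\lambda(y)\qquad\text{and}\qquad\lambda^+:=\lim_{y\to+\infty}\phi_\lambda(y)
\]
exist and satisfy $0<\lambda^-<\phi_\lambda(0)=1<\lambda^+<+\infty$; hence $\Sigma_\lambda$ lies in the slab $\{\lambda^-<z<\lambda^+\}$ bounded by the horospheres $\mathscr H_{\lambda^-}$ and $\mathscr H_{\lambda^+}$. Put $\Sigma_\lambda^{+}:=\{(x,y,\phi_\lambda(y))\mid y>0\}$ and $\Sigma_\lambda^{-}:=\{(x,y,\phi_\lambda(y))\mid y<0\}$; these are open subsets of $\Sigma_\lambda$ meeting along the curve $\{y=0\}$, so ${\rm closure}(\Sigma_\lambda^{-})\cup{\rm closure}(\Sigma_\lambda^{+})=\Sigma_\lambda$. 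Being bounded, increasing and concave on $(0,+\infty)$, $\phi_\lambda$ satisfies $\phi_\lambda'(y)\to0$ as $y\to+\infty$, and inserting this into the ODE \eqref{eq-EDOparabolic} — together with the elementary fact that a nonnegative decreasing integrable function $g$ satisfies $yg(y)\to0$, applied to $g=\phi_\lambda'$ — gives $\phi_\lambda''(y)\to0$, and likewise all higher derivatives of $\phi_\lambda$ decay; therefore $\Sigma_\lambda^{+}$ is asymptotic to $\mathscr H_{\lambda^+}$. The symmetric argument, using the convexity of $\phi_\lambda$ on $(-\infty,0)$, shows that $\Sigma_\lambda^{-}$ is asymptotic to $\mathscr H_{\lambda^-}$. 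This yields every assertion of the theorem except the pairwise noncongruence of the $\Sigma_\lambda$.

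For that last point, note that $\Sigma_0=\mathscr H$ has $H\equiv1$, whereas for $\lambda>0$ the surface $\Sigma_\lambda$ is not an open subset of a horosphere (it is invariant under Euclidean translations in $x$ and is non-planar) and is not minimal, so by Theorem~\ref{th-cmctranslator} its mean curvature is nonconstant; since congruences preserve the mean curvature function, $\Sigma_\lambda\not\cong\Sigma_0$. To distinguish $\Sigma_\lambda$ from $\Sigma_{\lambda'}$ for $0<\lambda<\lambda'$, I would produce a congruence invariant depending injectively on $\lambda$: a natural choice is the width $\log(\lambda^+/\lambda^-)$ of the smallest horospherical slab containing the surface, which one verifies to be a strictly increasing function of $\lambda$ by means of continuous dependence on initial data and a comparison argument for \eqref{eq-parabolicIVP}; alternatively, pairwise noncongruence follows a posteriori from the uniqueness theorem for grim reapers (Theorem~\ref{th-uniquenesscylinder}). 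I expect this noncongruence step to be the only real obstacle: all the geometric content of the statement is an immediate transcription of Lemmas~\ref{lem-parabolicODE01} and~\ref{lem-parabolicODE02}, but exhibiting a clean, provably injective congruence invariant — rather than merely asserting that the surfaces are distinct — requires a short but genuine extra argument.
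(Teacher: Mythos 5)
Your proposal is correct and follows essentially the same route as the paper, which deduces the theorem directly from Lemmas~\ref{lem-parabolicODE01} and~\ref{lem-parabolicODE02} with no further argument. You in fact supply more detail than the paper does (completeness from the two-sided bounds on $\phi_\lambda$, the asymptotics, and the noncongruence discussion); the only caveat is that your fallback of deducing noncongruence from Theorem~\ref{th-uniquenesscylinder} would not work, since that theorem does not distinguish the grim reapers from one another, so the congruence-invariant argument is the one to keep.
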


If $\Gamma\subset \R^2$ is the graph of the function
$t\in(-\pi/2,\pi/2)\mapsto-\log(\cos t)$, then the
cylinder $\Sigma = \Gamma\times\R\subset\R^3$ is a translator to MCF
contained in a slab $\mathcal S$ of $\R^3$, known as the
\emph{grim reaper cylinder}.
This nomenclature is due to the fact that the curve $\Gamma$ provides a solution to
the curve shortening flow,
called \emph{the grim reaper}, which is
given by the translation of $\Gamma$ in $\mathbb{R}^2$ in the $\vec{e}_2$-direction. By the
avoidance principle, such a solution ``kills'' any other solution in the
region $(-\pi/2,\pi/2)\times\mathbb{R}$ (see \cite[Chapter 2]{andrewsetal}). Similarly,
two surfaces (one of them compact) in $\R^3$ moving under MCF which are initially disjoint
remain so until one of them collapses. Hence, as $\Sigma$ translates under MCF, it ``kills'' all solutions to \eqref{eq-Kalphaflow}
in $\mathcal S$ with compact initial condition. An analogous process occurs in our case: any surface of the family $\mathscr G$ in
Theorem~\ref{th-grimreapers} has this
``killing'' property. Indeed, by \cite[Theorem 4]{delima},
the avoidance principle applies to surfaces moving under MCF
in $\h^3.$ For this reason, we named the elements of $\mathscr G$
grim reapers.

\begin{remark} \label{rem-symmetry}
The symmetry in~\eqref{eq-EDOparabolic} allows us to extend
the family $\mathscr G$ in Theorem~\ref{th-grimreapers}
for values $\lambda<0$ by simply defining
$\psi(y) = \phi(-y)$ for a given solution $\phi$
to~\eqref{eq-parabolicIVP} with positive initial data for
$\phi'$. However, the translator generated
by $\psi$ correspond to a rotation of $\pi$
around the $z$-axis of a grim reaper, being therefore congruent to it.
\end{remark}

\begin{remark}
At the completion of this manuscript, we became acquainted with the preprint \cite{mari},
in which the authors consider solitons to MCF generated by conformal fields in $\mathbb{H}^n$, called
\emph{conformal solitons}.
There, they obtained rotational and cylindrical conformal solitons whose initial conditions are
named {winglike} catenoids and grim reaper {cylinder}, respectively.
However, such solitons are not related to the ones considered here, since their generating
fields are not Killing.
\end{remark}

Analogously to the rotational case,
the uniqueness of solutions of ODE's with given initial conditions yields the
following result.

\begin{theorem} \label{th-uniquenesscylinder}
Any connected translator in $\h^3$ which is a parabolic cylinder
is, up to an ambient isometry (see Remark \ref{rem-symmetry}),
an open subset of a grim reaper or of a totally geodesic plane
containing the $z$-axis.
\end{theorem}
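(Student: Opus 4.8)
The plan is to reduce the statement to the classification of solutions of the ODE~\eqref{eq-EDOparabolic}, exactly as the rotational uniqueness theorem (Theorem~\ref{th-uniquenesscatenoid}) was reduced to the solutions of~\eqref{eq-EDOPsi}. So let $\Sigma\subset\h^3$ be a connected translator which is a parabolic cylinder. By definition of parabolic cylinder, $\Sigma$ is invariant under a one-parameter group of parabolic isometries fixing a parallel family of horospheres; after an ambient isometry we may assume this family is the family of horizontal horospheres $\mathscr H_h$ and that the invariance direction is $\partial_x$, so that $\Sigma$ is a horizontal cylinder over a curve $\gamma$ in a vertical totally geodesic plane, which we may take to be $\{x=0\}$.

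Next I would split into two cases according to the profile curve $\gamma$. First, if $\gamma$ is a vertical segment (i.e. $\gamma$ lies in a vertical line $\{x=0,\,y=y_0\}$), then $\Sigma$ is an open subset of the vertical totally geodesic plane $\{y=y_0\}$, which contains the $z$-axis after the normalization above (or is a translate of such a plane); this is one of the two alternatives in the statement, and it is indeed a translator since $H\equiv 0$ on it (cf.\ the first Example in Section~\ref{sec-translators}). Otherwise $\gamma$ is nowhere vertical on a dense open subset, so near any such point it is locally a graph $z=\phi(y)$ over an open interval, and on that interval the computation preceding Lemma~\ref{lem-parabolicODE01} applies verbatim: $\Sigma$ being a translator forces $\phi$ to solve~\eqref{eq-EDOparabolic}. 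Pick a base point $y_*$ in this interval and set $z_* = \phi(y_*)$, $\lambda_* = \phi'(y_*)$. After the ambient isometry $p\mapsto z_*^{-1}p$ (a hyperbolic dilation, which preserves the class of translators and the horosphere family) we may assume $z_*=1$, and after the reflection $y\mapsto -y$ about the plane $\{y=y_*\}$ if necessary (see Remark~\ref{rem-symmetry}) we may assume $\lambda_*\ge 0$. By Lemma~\ref{lem-parabolicODE02} the maximal solution of the initial value problem~\eqref{eq-parabolicIVP} with these data is defined on all of $\R$ and is precisely the graphing function of the grim reaper $\Sigma_{\lambda_*}\in\mathscr G$ from Theorem~\ref{th-grimreapers}. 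Uniqueness of solutions of~\eqref{eq-EDOparabolic} with prescribed value and first derivative at $y_*$ then shows that $\gamma$ agrees, on the connected component of $y_*$ in its domain, with the profile curve of $\Sigma_{\lambda_*}$; since a curve cannot be a vertical graph and contain a vertical segment simultaneously, the ``otherwise'' case cannot degenerate back into the first case, and connectedness of $\Sigma$ forces $\gamma$ to be an open subarc of the grim reaper's profile curve. Hence $\Sigma$ is, up to ambient isometry, an open subset of $\Sigma_{\lambda_*}$, completing the proof.

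The only genuinely delicate point is the bookkeeping of the normalizations: one must check that the isometries used to put the data in standard form — choosing the horosphere family, rescaling to make $z_*=1$, and reflecting to make $\lambda_*\ge 0$ — all preserve the class of parabolic-cylinder translators, and that the resulting ambiguity is exactly the one recorded in Remark~\ref{rem-symmetry}. This is routine but should be stated carefully, since it is where the ``up to an ambient isometry'' qualifier in the statement comes from. Everything else is an immediate consequence of Lemmas~\ref{lem-parabolicODE01} and~\ref{lem-parabolicODE02} together with ODE uniqueness, in complete parallel with the argument leading to Theorem~\ref{th-uniquenesscatenoid}.
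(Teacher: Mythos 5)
Your overall strategy --- reduce to the ODE \eqref{eq-EDOparabolic} and invoke uniqueness of solutions, in parallel with Theorem~\ref{th-uniquenesscatenoid} --- is exactly the route the paper takes (its own proof is a one-sentence appeal to ODE uniqueness), but as written your argument has three soft spots, two of which are genuine gaps. First, the vertical-segment case: the plane $\{y=y_0\}$ is totally geodesic for every $y_0$, but it is a translator only when $y_0=0$, since $H\equiv 0$ while $\langle p,\eta\rangle=y_0/z$. You argue in the wrong direction (``it is indeed a translator since $H\equiv 0$''), and the parenthetical ``or is a translate of such a plane'' is false; you should instead use the translator equation to force $y_0=0$, which is where the conclusion ``containing the $z$-axis'' comes from. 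Second, the normalization: \eqref{eq-EDOparabolic} is not autonomous, and Lemma~\ref{lem-parabolicODE02} classifies only solutions with data prescribed at $y=0$. The dilation $p\mapsto p/c$ acts on graphing functions by $\phi\mapsto\phi(c\,\cdot)/c$, i.e.\ it rescales the $y$-variable as well, so it can only normalize the value at $y=0$; prescribing $\phi(y_*)=1$ at an arbitrary $y_*$ does not place you inside the family \eqref{eq-parabolicIVP}. To close this you must first show that the maximal solution through your local data reaches $y=0$ (equivalently, that $\phi'$ cannot blow up at a finite endpoint of the maximal interval), and only then dilate and reflect.

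The third and most serious issue is the reduction to vertical graphs. Your dichotomy (vertical segment versus ``nowhere vertical on a dense open subset'') ignores isolated points where the profile curve has a vertical tangent; the remark that ``a curve cannot be a vertical graph and contain a vertical segment simultaneously'' does not exclude them. Near such a point the curve is a horizontal graph $y=\psi(z)$, and the computation of Lemma~\ref{lem-parabolicODE01} (or the substitution $\psi=\phi^{-1}$) shows that the translator condition becomes $\psi''=\frac{2\psi}{z^2}\bigl(1+(\psi')^2\bigr)$. This ODE admits solutions with $\psi'(z_0)=0$ and $\psi(z_0)\neq 0$, whose graphs have a genuine turning point; the cylinders over them are connected parabolic-cylinder translators that are neither vertical graphs nor planes. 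This is precisely the phenomenon that, in the rotational setting, forces the paper to introduce the horizontal-graph Lemma~\ref{lem-rotationalODE021} and produces the translating catenoids. Your proof (and, it must be said, the paper's one-line argument, which omits the horizontal-graph analysis in the parabolic setting) does not rule these out, so the step ``connectedness of $\Sigma$ forces $\gamma$ to be an open subarc of the grim reaper's profile curve'' is unjustified. A complete proof needs the horizontal-graph ODE and an argument relating its solutions to the family $\mathscr G$ and to the vertical planes.
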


\subsection{The tangency principle and applications} \label{sec-tp}

A distinguished property of translators to MCF in $\R^3$ is that they are critical
points of a weighted area functional and, therefore, they become minimal surfaces when changing
the ambient metric in a suitable manner \cite{ilmanen}.
In particular, the tangency principle applies to them, which allows one
to use translators as barriers (cf.~\cite{lopez2}). On the other hand,
it is unknown to us if translators to MCF in $\h^3$ can be made minimal in
a similar fashion. Nevertheless, as we establish in the next result,
the tangency principle holds for translators in $\h^3$.

\begin{theorem}[{\bf tangency principle for translators}]
\label{tangencyprin}
Let $\Sigma_1$ and $\Sigma_2$ be two translators to MCF in $\h^3$
which are tangent at a point $p\in{\rm int}\,\Sigma_1\cap{\rm int}\,\Sigma_2.$
If $\Sigma_1$ lies on one side of
$\Sigma_2$ in a neighborhood of $p$ in $\h^3,$ then
$\Sigma_1$ and $\Sigma_2$ coincide in a neighborhood of $p$ in $\Sigma_1\cap\Sigma_2.$
Moreover, if $\Sigma_1$ and $\Sigma_2$ are both complete
and connected, then $\Sigma_1=\Sigma_2.$
\end{theorem}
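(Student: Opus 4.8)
The plan is to reduce the tangency principle for translators to the classical strong maximum principle (and its boundary analogue, the Hopf lemma) for second-order quasilinear elliptic operators, following the standard template used for minimal surfaces and for translators in $\R^3$. First I would work locally: since $p\in\mathrm{int}\,\Sigma_1\cap\mathrm{int}\,\Sigma_2$ and the two surfaces are tangent there with a common unit normal $\eta(p)$, I choose Fermi-type coordinates adapted to the common tangent plane $T_p\Sigma_i$, so that in a neighborhood of $p$ both $\Sigma_1$ and $\Sigma_2$ are written as graphs $z=u_1(x)$ and $z=u_2(x)$ over a common domain $\Omega\subset\R^2$, with $u_1(0)=u_2(0)$ and $Du_1(0)=Du_2(0)$. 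The side condition becomes $u_1\le u_2$ (or $\ge$) on $\Omega$.

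The key step is to identify the PDE that the graphing function satisfies. Combining the relation \eqref{eq-MCrelation} between the Euclidean and hyperbolic mean curvatures with the translator equation \eqref{eq-translatorH301}, the condition ``$\Sigma$ is a translator'' becomes, for a graph $z=u(x)$, a quasilinear elliptic equation of the form
\begin{equation*}
Q[u] := a^{ij}(x,u,Du)\,D_{ij}u + b(x,u,Du) = 0,
\end{equation*}
where $(a^{ij})$ is positive definite (ellipticity comes from the second-order part being essentially the mean-curvature operator, which is elliptic) and $a^{ij}$, $b$ are smooth in their arguments on the relevant range. Here it is important that the first-order term $b$ absorbs both the $\bar\eta_3$ contribution in \eqref{eq-MCrelation} and the $\langle p,\eta\rangle$ term, both of which depend smoothly on $(x,u,Du)$; no second derivatives of $u$ appear outside $a^{ij}D_{ij}u$. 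Since $u_1$ and $u_2$ both solve $Q[u]=0$, the difference $w:=u_2-u_1$ satisfies a linear elliptic equation $Lw = 0$ with bounded (locally) coefficients, obtained in the usual way by writing $Q[u_2]-Q[u_1]$ and interpolating along the segment $tu_2+(1-t)u_1$, $t\in[0,1]$; the smoothness of $a^{ij}$ and $b$ guarantees the interpolated coefficients are well-defined and bounded near $p$.

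With $Lw=0$, $w\ge 0$ on $\Omega$, and $w(0)=0$ attained at the interior point $0$, the strong maximum principle of Hopf forces $w\equiv 0$ on the connected neighborhood of $0$, i.e.\ $\Sigma_1$ and $\Sigma_2$ coincide near $p$. For the global statement, I would run the standard open--closed argument: let $\mathcal{C}$ be the set of points of $\Sigma_1$ at which $\Sigma_1$ and $\Sigma_2$ coincide to infinite order (equivalently, on a neighborhood); the local tangency principle just proved shows $\mathcal{C}$ is open, it is closed by continuity, and it is nonempty by hypothesis, so $\mathcal{C}=\Sigma_1$ by connectedness; since $\Sigma_2$ is also connected and contains this relatively open and closed copy of $\Sigma_1$, one concludes $\Sigma_1=\Sigma_2$ (using completeness to rule out $\Sigma_1$ being a proper subset, e.g.\ via the fact that a complete surface has no boundary accumulating inside another).

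The main obstacle I anticipate is purely computational but must be done carefully: verifying that the translator equation \eqref{eq-translatorH301}, after substituting \eqref{eq-MCrelation}, genuinely takes the quasilinear form $a^{ij}D_{ij}u+b=0$ with $a^{ij}$ depending only on $(x,u,Du)$ (not on $D^2u$) and with uniform ellipticity on compact sets. In $\R^3$ one gets this for free from the weighted-minimal-surface interpretation; here, as the authors note, that interpretation is unavailable, so the ellipticity and the clean separation of second-order terms have to be checked by hand from the explicit expression for $H$ in half-space coordinates. Once that structural fact is in place, everything else is a direct invocation of Hopf's maximum principle and a routine connectedness argument.
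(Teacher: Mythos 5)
Your proposal is correct and follows essentially the same route as the paper: write both surfaces locally as graphs, observe that the translator condition combined with \eqref{eq-MCrelation} yields a quasilinear elliptic equation of the form $u^2Q(u)+B(x,y,u,Du)=0$, apply the tangency/strong maximum principle for quasilinear operators to the two solutions, and then propagate the coincidence set by a covering (open--closed) argument. The only cosmetic difference is that you graph over the common tangent plane in Fermi coordinates, whereas the paper uses vertical graphs $z=u(x,y)$ and treats the case of a vertical tangent plane separately via horizontal graphs, which makes the explicit verification of the quasilinear structure (the step you correctly single out as the real work) more direct in half-space coordinates.
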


\begin{proof}
Let $\Sigma_1$ and $\Sigma_2$ be two translators to
MCF in $\hn3$, tangent
at a point $p\in \Sigma_1\cap \Sigma_2$, and such that $\Sigma_1$
stays locally
on one side of $\Sigma_2$.
If $T_p\Sigma_1$ is not vertical,
there exists a domain
$\Omega\subset\R^2$ and positive
functions $u_1,\,u_2\colon \Omega \to \R$
such that neighborhoods
$U_1 \subset \Sigma_1$ and $U_2\subset \Sigma_2$ containing $p$
are respectively parameterized by
\begin{equation*}U_1 = \{(x,y,u_1(x,y))\mid (x,y)\in \Omega\},\quad
U_2 = \{(x,y,u_2(x,y))\mid (x,y)\in \Omega\}.\end{equation*}
Furthermore, after reindexing we may assume that
$u_1\geq u_2$ in $\Omega$.

Let
$\Sigma_1$ and $\Sigma_2$ be oriented with respect to vector fields
$\eta_1$ and $\eta_2$ so that $\eta_1(p) = \eta_2(p)$ points
upwards. Thus, if $Q$ is the quasilinear elliptic
operator
\begin{equation}\label{eqopQ}
Q(u) = u_{xx}(1+u_y^2)+u_{yy}(1+u_x^2)-2u_{xy}u_xu_y,
\end{equation}
it follows
from~\eqref{eq-MCrelation} that
the mean curvature functions $H_1,\,H_2$ of $U_1$ and $U_2$ satisfy
\begin{equation*}H_i =
u_i\frac{Q(u_i)}{2(1+(u_i)_x^2+(u_i)_y^2)^{\frac32}}+
\frac{1}{(1+(u_i)_x^2+(u_i)_y^2)^{\frac12}},\quad
i\in\{1,2\}.\end{equation*}

Then,
after setting $B(x,y,u,Du) = 2(1+u_x^2+u_y^2)(xu_x+yu_y)$,
where
$Du$ denotes the (Euclidean) gradient of $u$,
it follows from~\eqref{eq-translatorH301} that
\begin{equation}\label{eq:operator}
(u_i)^2Q(u_i) +B(x,y,u_i,Du_i) = 0,\quad
i\in\{1,2\}.
\end{equation}
But the operator $u^2Q(u)+B(x,y,u,Du)$ in~\eqref{eq:operator}
satisfies the hypothesis of the tangency principle for
quasilinear operators~\cite[Theorem~2.2.2]{pserrin},
thus $u_1 = u_2$, which implies $U_1 = U_2$.

The case where $T_p\Sigma_1$ is vertical can be treated
analogously: after a rotation about the $z$-axis (which
preserves the property of being a translator to MCF),
locally, both $\Sigma_1$ and $\Sigma_2$
can be parameterized as horizontal graphs
\begin{equation*}
\{(x,u_1(x,z),z)\mid (x,z)\in \widehat{\Omega}\} \,\,\, \text{and} \,\,\,
\{(x,u_2(x,z),z)\mid (x,z)\in \widehat{\Omega}\}
\end{equation*}
for some domain $\widehat{\Omega}\subset \R^2_+$, and both
$u_1,\,u_2$ satisfy
\begin{equation*}z^2Q(u) +\widehat{B}(x,z,u,Du) = 0\end{equation*}
for $\widehat{B}(x,z,u,Du) = 2(xu_x-u)(1+u_x^2+u_z^2)$
and $Q$ as in~\eqref{eqopQ}. Once again,
we obtain from~\cite[Theorem~2.22]{pserrin} that
$\Sigma_1$ and $\Sigma_2$ coincide in a neighborhood of $p$.

At this point, we have shown that if $\Sigma_1$ and $\Sigma_2$
are tangent at a point $p$, they must coincide
in neighborhoods which are either horizontal
or vertical graphs for $\Sigma_1$ and $\Sigma_2$. The proof
for the case where $\Sigma_1$ and $\Sigma_2$ are complete and connected
now follows from covering $\Sigma_1$ and $\Sigma_2$ with such
(overlapping) neighborhoods.
\end{proof}

\begin{remark} \label{rem-noorientationcondition}
Theorem~\ref{tangencyprin} contrasts with the tangency
principle for the constant mean curvature case
(see, for instance,~\cite[Theorem~3.2.4]{lopez}): two
distinct
geodesic spheres in $\R^3$ with the same mean curvature
can be tangent to each other without violating the tangency
principle.
In the setting of translators, the tangency principle
does not require any assumptions on the orientation
of $\Sigma_1$ and $\Sigma_2$ because,
from~\eqref{eq-translatorH301}, if $\Sigma_1$ and $\Sigma_2$
are translators to MCF which are tangent at a point $p$,
then necessarily their mean curvature vectors
$\mathbf{H}_1$ and $\mathbf{H}_2$ must agree at $p$,
which defines a coinciding, {\em standard}
(local) orientation for both $\Sigma_1$ and $\Sigma_2$.
\end{remark}

On the remainder of the section we will apply the tangency principle to
establish some classification results concerning translators in $\hn3$.
First, we show
that properly immersed translators in $\h^3$ are never cylindrically bounded, and next
we prove that any horoconvex translator which is complete or transversal
to the $z$-axis is necessarily an open set of a horizontal horosphere.

Recall that a circular cone in
$\R_+^3:=\R^2\times (0,+\infty)$ with vertex at
$p\in\R^2$ and axis $\gamma_p:=\{p\}\times (0,+\infty)$ constitutes a
\emph{cylinder} $\mathscr C$ in $\h^3,$ that is, the set of points
of $\h^3$ at a fixed distance to the vertical geodesic $\gamma_p.$ The convex side of
$\mathscr C$ is the component of $\h^3-\mathscr C$ which contains $\gamma_p.$

\begin{theorem} \label{th-nocylindricallybounded}
There is no properly immersed
translator to MCF in $\h^3$ which is contained in
the convex side of a cylinder with vertex at $p=(0,0).$
In particular, there is no closed (i.e., compact without boundary)
translator to MCF in $\h^3$.
\end{theorem}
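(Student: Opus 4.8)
The plan is to argue by contradiction, using the one-parameter family $\mathscr{C} = \{\Sigma_r \mid r > 0\}$ of translating catenoids from Theorem~\ref{th-translatingcatenoids} as barriers, together with the tangency principle (Theorem~\ref{tangencyprin}). Suppose $\Sigma$ is a properly immersed translator to MCF contained in the convex side of some cylinder $\mathscr{C}_\sigma$ with vertex at $p = (0,0)$ and radius $\sigma > 0$; that is, $\Sigma$ lies in the region of $\h^3$ within hyperbolic distance $\sigma$ of the $z$-axis $\gamma_0$. I want to slide a translating catenoid toward $\Sigma$ from the outside and produce a forbidden interior tangency.

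First I would record the geometric picture from Theorem~\ref{th-translatingcatenoids}: each $\Sigma_r$ is a rotationally symmetric annulus whose profile curve, written as a horizontal graph $z = d_r(x)$ over $x \in (-r^+,-r)\cup(r,r^+)$ wait—more precisely $d_r$ is defined on an interval and $\Sigma_r$ is the union of two vertical graphs $\Sigma_r^\pm$ over the complement of the Euclidean disk $\mathcal{D}_r$ of radius $r$ in the horosphere $\mathscr{H}$ at height $1$. The key features are: (a) $\Sigma_r$ avoids the solid cylinder of Euclidean radius $r$ about the $z$-axis entirely, since it is a graph over $\mathscr{H}\setminus\mathcal D_r$; and (b) $\Sigma_r$ is contained in the slab $1 > z > $ wait, between heights $r^-$ and $r^+$, and as $r\to 0$ both $r^\pm\to 1$ so $\Sigma_r$ collapses to a double copy of $\mathscr{H}$ (item (iv)). Now, since hyperbolic translations $\Gamma_t(q) = e^t q$ and Euclidean dilations preserve the class of translators, and also preserve the property of being a catenoid-type translator in $\mathscr{C}$, I can translate $\Sigma$ vertically (apply $\Gamma_t$) without loss of generality; however note that hyperbolic translation does \emph{not} keep $\Sigma$ inside the original cylinder unless the cylinder is also translated, but cylinders about $\gamma_0$ are $\Gamma_t$-invariant, so this is fine.

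The core of the argument: I would take the family of translating catenoids $\Sigma_r$ and observe that the complement of the solid Euclidean cylinder of radius $r$ about $\gamma_0$ is foliated—or at least swept out—by members of $\mathscr{C}$ and their vertical translates $\Gamma_t(\Sigma_r)$. Since $\Sigma$ lies in the convex side of $\mathscr{C}_\sigma$, which is a bounded-radius neighborhood of $\gamma_0$, while for large $r$ the catenoid $\Sigma_r$ lies entirely \emph{outside} that neighborhood (by item (v), as $r\to\infty$ the surface escapes to infinity, and in particular its minimal distance to $\gamma_0$ grows), the two surfaces are disjoint for $r$ large. Now decrease $r$: as $r$ shrinks to $0$, $\Sigma_r$ converges to a double copy of $\mathscr{H}$, which certainly meets the convex side of $\mathscr{C}_\sigma$ (it meets every vertical geodesic, in particular $\gamma_0$, at height $1$, wait—it does not pass through $\gamma_0$ since $\Sigma_r$ is a graph over $\mathscr H\setminus\mathcal D_r$, but in the limit it covers all of $\mathscr H$ minus the origin). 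To force an actual contact I combine this with a vertical translation: since $\Sigma$ is properly immersed and contained in a slab?—no, $\Sigma$ need not be contained in a slab, only in a cylinder. Here is where I need to be careful. The clean approach is: properness of $\Sigma$ inside the convex side of $\mathscr C_\sigma$ means $\Sigma$ is a closed subset of that solid cylinder; I would pick a translating catenoid $\Sigma_{r_0}$ with $r_0 < $ (some function of $\sigma$) so that the "waist region" of $\Sigma_{r_0}$—the part near $\mathscr H$ where it comes closest to $\gamma_0$—actually intersects the convex side of $\mathscr C_\sigma$, while the "asymptotic ends" $\Sigma_{r_0}^\pm$, being asymptotic to the horospheres $\mathscr H_{r_0^\pm}$ of heights close to $1$, stay in a bounded slab. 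Then, by also applying vertical hyperbolic translations $\Gamma_t$ and Euclidean scalings, I would set up a last-contact argument: start with a catenoid disjoint from $\Sigma$ and move it (through the family of catenoids, possibly composed with $\Gamma_t$) until it first touches $\Sigma$. At the first contact point, since $\Sigma$ is properly immersed, the touching point is an interior point of both surfaces, they are tangent there, and one lies on one side of the other locally. The tangency principle then forces $\Sigma$ to coincide with (an open piece of) that catenoid near the contact point, hence globally on the connected component, so $\Sigma$ is (a piece of) a translating catenoid; but translating catenoids are \emph{not} cylindrically bounded—each $\Sigma_r$ is a graph over the unbounded set $\mathscr H\setminus\mathcal D_r$, so it leaves every cylinder—contradicting the assumption that $\Sigma$ sits in the convex side of $\mathscr C_\sigma$.

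The main obstacle I anticipate is making the "sliding/last-contact" step rigorous: I must ensure that as I move through the one-parameter family $\mathscr C$ (perhaps reparametrized, and perhaps composed with vertical translations and dilations) the catenoids sweep across \emph{all} of the convex side of $\mathscr C_\sigma$, so that a first contact with $\Sigma$ genuinely occurs and occurs at an interior point rather than escaping to the asymptotic boundary at infinity. Properness of $\Sigma$ is exactly what prevents the contact point from escaping to $\partial_\infty\h^3$, and the uniform bounds from Theorem~\ref{th-translatingcatenoids} (in particular Claim~\ref{claimunifr}, $r^+ \le e^{\pi/4\sqrt 2}$, and the analogous lower bound on $r^-$) are what keep the catenoid family from degenerating prematurely. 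For the final sentence about closed translators: a compact translator without boundary is automatically contained in the convex side of some cylinder with vertex at $(0,0)$ (indeed in a cylinder about any vertical geodesic), so the nonexistence of closed translators follows immediately from the first statement. Once the sweeping/first-contact step is set up correctly, everything else is a direct application of Theorem~\ref{tangencyprin} and Theorem~\ref{th-uniquenesscatenoid}.
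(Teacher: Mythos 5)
Your proposal is correct and follows essentially the same route as the paper: normalize $\Sigma$ by a vertical hyperbolic translation, use the uniform bound on $r^+$ and Theorem~\ref{th-translatingcatenoids}(v) to get disjointness of $\Sigma_r$ from $\Sigma$ for large $r$, shrink $r$ until a first interior contact occurs (properness preventing escape to the asymptotic boundary), and apply the tangency principle to conclude $\Sigma$ is a translating catenoid, contradicting cylindrical boundedness. The deduction of the closed case is also identical to the paper's.
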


\begin{proof}
Suppose, by contradiction, that there exists a
properly immersed translator $\Sigma$ to MCF in
$\h^3$ which is contained in the convex side $\Omega$ of a cylinder
$\mathcal{C}$ with vertex at $p=(0,0).$
Clearly, the property of being a translator is invariant
by the translations $\Gamma_t(p):=e^tp, \,t\in\R.$
Therefore, we can assume without loss of generality that
$\Sigma$ intersects the horosphere $\mathscr H$ of height $1.$

Under the above conditions, since $r^+$ is uniformly bounded,
we have from item (v) of
Theorem~\ref{th-translatingcatenoids} that
there exists $R>0$ such that, for any $r>R,$ the translating catenoid
$\Sigma_r$ of the family $\mathscr C$
is disjoint from $\mathcal{C},$ and
so from $\Sigma.$ On the other hand, for a sufficiently
small $r>0,$ $\Sigma_r$
and $\Sigma$ have nonempty intersection.
Taking into account the asymptotic behavior of $\Sigma_r$,
together with the hypothesis that $\Sigma$ is contained in
$\Omega,$ as $r$ decreases from
$R$ to zero,
a standard argument shows that there will be a first
value $r_*$ such that $\Sigma_{r_*}$ is the element
of $\mathscr C$ that first establishes a contact with
$\Sigma$ at a point $p\in\Sigma\cap\Sigma_{r_*}$,
as in Figure~\ref{fig-translatingcatenoidB}. Then,
$\Sigma$ and $\Sigma_{r_*}$ are tangent at $p$
with $\Sigma$ on one side of
$\Sigma_r$, and the tangency principle
(Theorem~\ref{tangencyprin}) applies to show
that $\Sigma=\Sigma_{r_*},$ which is a contradiction,
since $\Sigma$ is contained in $\Omega$ and
$\Sigma_{r_*}$ is not.
\end{proof}

\begin{figure}[h]
\includegraphics[width=\textwidth]{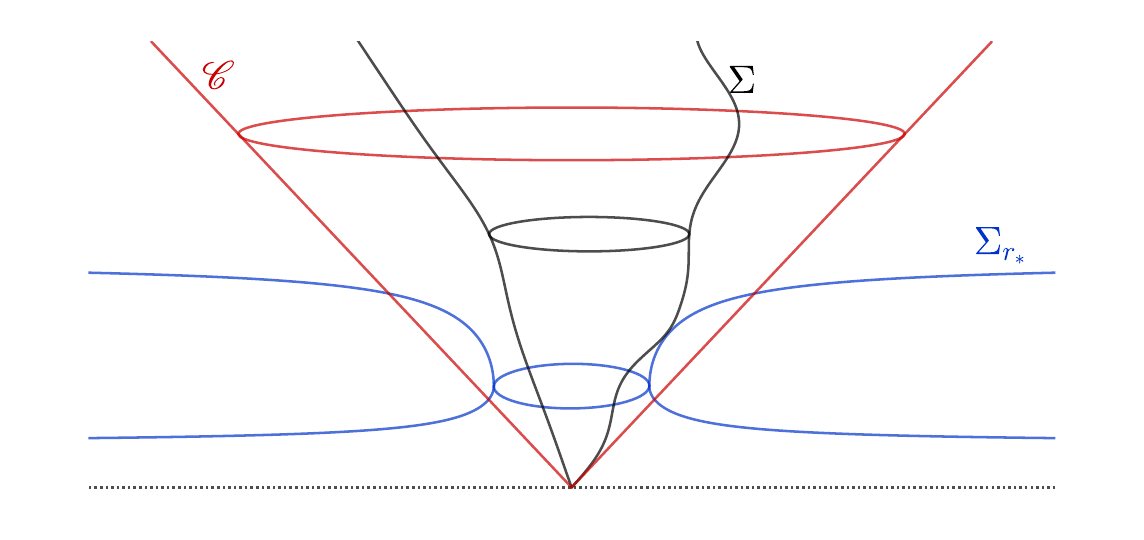}
\caption{\small In the proof of Theorem \ref{th-nocylindricallybounded}, there exists a smallest
$r_*>0$ such that $\Sigma_{r_*}$ intersects $\Sigma$ tangentially,
with $\Sigma$ in one of the two regions of $\hn3$ defined
by $\Sigma_{r_*}$.}
\label{fig-translatingcatenoidB}
\end{figure}

An oriented surface $\Sigma$ of $\h^3$ is called \emph{horoconvex} if its principal curvatures
$k_1, k_2$ satisfy $k_i\ge 1$, $i=1,2.$ It is well known that
every point $p$ of a horoconvex surface $\Sigma\subset\h^3$ is locally supported by a
horosphere $\mathcal H$, meaning that
$\mathcal H$ is tangent to $\Sigma$ at $p$, and
there exists a neighborhood of $p$ in $\Sigma$ which
lies in the horoball bounded by $\mathcal H$
(see, e.g., \cite[Lemma 1]{currier}).
Since the tangency principle immediately implies that no translator
can attain a local maximum or local minimum for its height function,
we obtain that
\emph{any horoconvex translator whose
tangent plane is horizontal at one of its points is necessarily
an open set of a horizontal horosphere}.
With these considerations,
we are in position to establish the following uniqueness result.

\begin{theorem} \label{thmnovo}
Let $\Sigma$ be a horoconvex translator in $\h^3$ and assume that
one of the following occurs:
\begin{enumerate}
\item[i.]\label{case1dothm} $\Sigma$ is complete.
\item[ii.] $\Sigma$ intersects the $z$-axis transversely.
\end{enumerate}
Then, $\Sigma$ is an open set of a horizontal horosphere.
\end{theorem}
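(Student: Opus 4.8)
The plan is to use the tangency principle together with the family of translating catenoids as barriers, much as in the proof of Theorem~\ref{th-nocylindricallybounded}, and to combine this with the observation already recorded above that a horoconvex translator with a horizontal tangent plane at some point is an open subset of a horizontal horosphere. So the whole game is to produce a point of $\Sigma$ at which the tangent plane is horizontal.

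First I would treat case~ii. Suppose $\Sigma$ meets the $z$-axis transversely at a point $q=(0,0,z_0)$. After applying an ambient translation $\Gamma_t(p)=e^tp$ (which preserves translators), we may assume $z_0=1$, so $q\in\mathscr H$, the horosphere at height $1$. Since $\Sigma$ is horoconvex, near $q$ it lies (on one side) inside the horoball bounded by its supporting horosphere at $q$; I would exploit this together with the catenoid barriers $\Sigma_r$. Recall from Theorem~\ref{th-translatingcatenoids} that as $r\to 0$ the catenoids $\Sigma_r$ converge to a double copy of $\mathscr H$ away from $(0,0,1)$, and each $\Sigma_r$ has its ``waist'' circle on $\mathscr H$ of Euclidean radius $r$ about the axis; for small $r$, $\Sigma_r$ comes arbitrarily close to $\mathscr H$ on compact sets off the axis, while for larger $r$ it moves away. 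The transversality of $\Sigma$ to the axis at $q$, together with horoconvexity, pins $\Sigma$ locally between $\mathscr H$ and the catenoids: I would slide $r$ and find a first value $r_*$ at which some $\Sigma_{r_*}$ makes contact with $\Sigma$, apply Theorem~\ref{tangencyprin} to conclude $\Sigma=\Sigma_{r_*}$, and derive a contradiction with transversality (a catenoid is tangent to the axis, not transverse, at its intersection points). Hence no such contact is possible except the degenerate one, forcing the tangent plane of $\Sigma$ at $q$ to be horizontal; the remark preceding the theorem then finishes case~ii.

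For case~i, let $\Sigma$ be complete and horoconvex. If $\Sigma$ is closed this is already excluded by Theorem~\ref{th-nocylindricallybounded}, so $\Sigma$ is noncompact. Horoconvexity ($k_i\ge 1$) makes $\Sigma$ locally supported by horospheres on one side at every point; I would first argue that this forces $\Sigma$ to be a graph—more precisely, that its Euclidean unit normal $\bar\eta_3$ cannot change sign—so $\Sigma$ lies on one side of each of its tangent horospheres globally in a controlled way, and in particular $\Sigma$ is a vertical or horizontal graph over a domain. Then I would again introduce the catenoids $\Sigma_r$ as barriers from below and above: by completeness and the one-sided horosphere support, $\Sigma$ is trapped in a slab, and by sliding $r$ I can force a tangential contact with some $\Sigma_{r_*}$ unless $\Sigma$ itself is a horosphere. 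Since a translating catenoid is not horoconvex (its two principal curvatures have opposite behavior along the profile—indeed one of them becomes small near the asymptotic horospheres, as the proof of Theorem~\ref{th-translatingcatenoids} shows $\phi_r'\to 0$), the conclusion $\Sigma=\Sigma_{r_*}$ contradicts horoconvexity. Therefore the barrier never touches and $\Sigma$ must already be a horizontal horosphere.

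I expect the main obstacle to be making the ``sliding barrier'' argument rigorous in case~i without a priori knowing $\Sigma$ is rotationally symmetric or cylindrically bounded: one must use horoconvexity to get enough control on the position and normal of $\Sigma$ (e.g.\ that $\bar\eta_3$ is of one sign and that $\Sigma$ lies in a fixed slab) before the catenoids can be deployed as barriers, and one must verify that the family $\{\Sigma_r\}$ together with their $\mathcal G$-translates sweeps out the relevant region. Once the geometry is set up, the contradiction comes cleanly from Theorem~\ref{tangencyprin} and the fact that catenoids and grim reapers fail to be horoconvex, so the heart of the matter is the barrier placement, not the tangency step itself.
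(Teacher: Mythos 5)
You correctly identify the crux --- reduce everything to producing one point of $\Sigma$ with horizontal tangent plane, after which the supporting-horosphere observation plus the tangency principle finishes --- but the route you propose to that point has genuine gaps in both cases. In case~ii the paper does not use barriers at all: it writes $\Sigma$ near the axis as a Euclidean graph $u$ over a disk $\Omega$ centered at the origin, notes that horoconvexity forces the Euclidean mean curvature (hence $Q(u)$) to have a fixed sign, and then reads off from the translator equation $u^2Q(u)+2(1+|Du|^2)(xu_x+yu_y)=0$ that $xu_x+yu_y$ has the opposite sign; restricting $u$ to each line through the origin then shows $t=0$ is a local extremum in every direction, so $Du(0,0)=0$ and the tangent plane at $q$ is horizontal. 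Your sliding-catenoid scheme cannot replace this: in case~ii $\Sigma$ is not assumed complete or proper, so the ``first contact'' with $\Sigma_{r_*}$ may occur at a boundary point of $\Sigma$, where Theorem~\ref{tangencyprin} says nothing; and even granting that no contact occurs, that conclusion does not by itself imply the tangent plane at $q$ is horizontal --- the logical bridge from ``no barrier touches'' to ``$Du(q)=0$'' is missing.

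In case~i the gap is of a different nature: the assertion that a complete noncompact horoconvex surface of $\h^3$ must be a horosphere is precisely Currier's theorem \cite{currier}, which the paper invokes directly after using Theorem~\ref{th-nocylindricallybounded} to rule out compactness. Your barrier argument is in effect an attempt to reprove that theorem, and the intermediate claims it rests on --- that horoconvexity forces $\bar\eta_3$ to have one sign, that $\Sigma$ is a global vertical or horizontal graph, and that it is trapped in a slab --- are all unproven (note that geodesic spheres are horoconvex and are not graphs, so compactness must be excluded \emph{before} any such structural claim can hold). The concluding step, ``the barrier never touches and $\Sigma$ must already be a horizontal horosphere,'' is a non sequitur: absence of tangential contact with the family $\{\Sigma_r\}$ does not characterize horospheres. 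The fix is simply to cite Currier's classification at that point, as the paper does.
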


\begin{proof}
Let $\Sigma$ be as stated. By the comments above,
in order to prove that $\Sigma$ is an open
set of a horizontal horosphere, it suffices to show that
the tangent plane of $\Sigma$ is horizontal at one of its points.

First, assume that item~i. holds, so
Theorem~\ref{th-nocylindricallybounded} implies that
$\Sigma$ is noncompact. The main results in \cite{currier} show that
horospheres are the unique
complete and noncompact horoconvex surfaces of $\h^3$,
so $\Sigma$ is a horosphere.
In particular, there is a point $p\in\Sigma$
where $T_p\Sigma$ is horizontal, so $\Sigma$ is a horizontal horosphere.

Assume now that $\Sigma$ intersects the $z$-axis transversally at a point. Then, there is an open
neighborhood $U$ of $\Sigma$ which is a graph of
a smooth function $u$ over an open ball $\Omega\subset \R^2$ centered at the
origin $(0,0)$. Since $\Sigma$ is a translator,
$u$ satisfies (see the proof of Theorem~\ref{tangencyprin}),
\begin{equation} \label{eq-edp}
u^2Q(u)+B(x,y,u,Du)=0,
\end{equation}
where $Q$ is defined by~\eqref{eqopQ}
and $B(x,y,u,Du) = 2(1+u_x^2+u_y^2)(xu_x+yu_y)$.
In addition, it follows from the horoconvexity of $\Sigma$
that its mean curvature $\overbar H$ with respect to the induced
Euclidean metric cannot change sign, being either everywhere
nonpositive or everywhere nonnegative, which, when orienting
$U$ with respect to the upwards pointing normal,
correspond to the respective cases $Q(u)\geq 0$ or $Q(u)\leq0$.

First, assume that
$Q(u)\ge 0$. Then, it follows from~\eqref{eq-edp} that
\begin{equation} \label{eq-B>0}
xu_x(x,y)+yu_y(x,y)\leq 0 \,\,\, \forall(x,y)\in\Omega.
\end{equation}

Let $\theta\in[0,\pi]$ be given and consider
$u_\theta(t) = u(t\cos(\theta),t\sin(\theta))$,
the restriction of $u$ to the line through the origin
$L_\theta = \{(t\cos(\theta),t\sin(\theta))\mid t\in \R\}\cap \Omega$.
Then,~\eqref{eq-B>0} implies that
\begin{equation*}tu_\theta'(t) = t\cos(\theta)u_x(t\cos(\theta),t\sin(\theta))
+t\sin(\theta)u_y(t\cos(\theta),t\sin(\theta))\leq0.\end{equation*}
In particular, $u_\theta'(t) \geq 0$ when $t<0$ and $u_\theta'(t)\leq 0$
when $t>0$, therefore $t = 0$ is a point where $u_\theta$ assumes a local
maximal value. Since $\theta$ is any given value in the compact set
$[0,\pi]$, this implies that the tangent plane of $\Sigma$ at $p=(0,0,u(0,0))$
is horizontal, as we wished to prove.

The case when $Q(u)\leq0$ (i.e., when the mean curvature vector of $\Sigma$
points downwards) can be treated analogously.
\end{proof}

\begin{remark}
The horoconvexity hypothesis
in Theorem \ref{thmnovo} is necessary to the conclusion, since
translating catenoids and grim reapers are complete, and
any grim reaper is transversal to the $z$-axis.
Furthermore,
as we proved in Theorem~\ref{th-translatingcatenoids}-(iv),
horizontal horospheres are limits of
hyperbolic translating catenoids, so that they constitute the
analogues of the bowl soliton
of $\R^3$ (cf. \cite{schulzeetal}). From this point of view,
Theorem~\ref{thmnovo}
relates to certain uniqueness results for the bowl soliton, such as
the celebrated theorem by Wang~\cite{wang}, which asserts that
the bowl soliton is the only convex entire translator of $\R^3$.
\end{remark}

\section{Rotators to MCF in $\h^3$}\label{sec-rotators}

Let us consider now the one-parameter group $\mathcal G\subset{\rm Iso}(\h^3)$ of
rotations $\Gamma_t$ of $\h^3=(\R_+^3,ds^2)$ about the $z$-axis.
Considering the decomposition $\R_+^3=\R^2\times (0,+\infty),$ we have that
\[
\Gamma_t=\left[
 \begin{array}{cc}
 e^{tJ} & \\
 & 1
 \end{array}
\right], \quad J=\begin{bmatrix}
0 & -1\\
1 & \phantom-0
\end{bmatrix}.
\]

In this setting, an initial condition of
a $\mathcal G$-soliton will be called
a \emph{rotating soliton} or simply a \emph{rotator}.
The (horizontal) Killing field associated to $\mathcal G$ is
$\xi(p)=J\pi(p),$ $p\in\h^3,$
where $\pi$ denotes the projection over
$\{(0,0,1)\}^\perp\subset\R^3$, i.e.,
$\pi(x,y,z)=x\partial_{x}+y\partial_{y}$.
Hence, a surface $\Sigma$ of hyperbolic space $\h^3$ is a rotator to
MCF if and only if
\begin{equation} \label{eq-translatorH300}
H(p)=\langle J\pi(p),\eta(p)\rangle \,\,\,\forall p\in\Sigma.
\end{equation}

Clearly, a minimal surface of $\h^3$ is a (stationary) rotator if and only if
it is invariant by rotations about the
$z$-axis. These minimal surfaces were
classified in~\cite{docarmo-dajczer}, and they include, of course,
the one-parameter family of totally geodesic planes which intersect the $z$-axis
orthogonally, as well as the hyperbolic catenoid obtained by Mori~\cite{mori}.
On the other hand, it is clear that no horosphere is a rotator in $\h^3$.
These facts, together with the considerations of Remark~\ref{rem-cmcsoliton}, yield

\begin{theorem} \label{th-classificationCMCrotators}
The only rotators of constant mean curvature in $\h^3$ are the
minimal surfaces of revolution with axis $\{(0,0)\}\times(0,+\infty).$
\end{theorem}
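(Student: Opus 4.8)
The plan is to reduce the statement to Remark~\ref{rem-cmcsoliton}, together with the two facts recorded just before the theorem: no horosphere is a rotator in $\h^3$, and a minimal surface of $\h^3$ is a (stationary) rotator if and only if it is invariant under rotations about the $z$-axis (the latter being immediate from~\eqref{eq-main}, since for the rotation group the Killing field is $\xi(p)=J\pi(p)$). With these in hand, the ``if'' direction is trivial: if $\Sigma$ is a minimal surface of revolution with axis $\{(0,0)\}\times(0,+\infty)$, then $H\equiv 0$ is constant, $\xi=J\pi$ is tangent to $\Sigma$, so $\forma{\xi,\eta}\equiv 0=H$ and~\eqref{eq-translatorH300} holds; thus $\Sigma$ is a rotator of (constant, null) mean curvature.

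For the converse, let $\Sigma$ be a connected rotator with constant mean curvature $H\equiv c$ (the disconnected case follows componentwise), and consider two cases. If $c\neq 0$, then, since the evolution formula~\eqref{eq-evolutionequation} holds for every $\mathcal G$-soliton, the argument in the proof of Theorem~\ref{th-cmctranslator} applies to $\Sigma$ --- exactly as noted in Remark~\ref{rem-cmcsoliton} --- and shows that $\Sigma$ is an open subset of some horosphere $\mathscr H$ of $\h^3$. Since $\mathscr H$, its unit normal field $\eta$ and the Killing field $J\pi$ are all real-analytic, the identity $H=\forma{J\pi,\eta}$, which holds on the nonempty open set $\Sigma$, propagates by unique continuation to the whole connected surface $\mathscr H$; hence $\mathscr H$ itself would be a rotator, contradicting the fact that no horosphere is one. (Alternatively, this case can be excluded by a direct computation of $\forma{J\pi,\eta}$ on an arbitrary horosphere of $\h^3$: it is identically zero on the rotationally symmetric horospheres and nonconstant on every other horosphere, hence never equal to a nonzero constant on an open piece.) Therefore $c=0$ and $\Sigma$ is minimal; being a rotator, $\Sigma$ is then invariant under rotations about the $z$-axis, i.e., $\Sigma$ is a minimal surface of revolution with axis $\{(0,0)\}\times(0,+\infty)$, as claimed --- these surfaces being the ones classified in~\cite{docarmo-dajczer}, among which are the hyperbolic catenoid of Mori~\cite{mori} and the totally geodesic planes meeting the axis orthogonally.

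I do not expect a genuine obstacle here: the substance is carried by Remark~\ref{rem-cmcsoliton} and by the soliton characterization~\eqref{eq-main}. The one slightly delicate step is the passage, in the case $c\neq 0$, from an open piece of a horosphere satisfying the rotator equation to the entire horosphere satisfying it; this is handled by real-analyticity (unique continuation), or, if one prefers an entirely elementary route, by the explicit computation of $\forma{J\pi,\eta}$ on horospheres mentioned above.
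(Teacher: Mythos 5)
Your proposal is correct and follows essentially the same route as the paper: the paper derives the theorem directly from Remark~\ref{rem-cmcsoliton}, the observation that no horosphere is a rotator, and the fact that minimal rotators are exactly the rotationally invariant minimal surfaces classified in~\cite{docarmo-dajczer}. Your additional care in passing from ``an open piece of a horosphere satisfies the rotator equation'' to ``the whole horosphere does'' (via analyticity or the explicit computation of $\forma{J\pi,\eta}$) is a detail the paper leaves implicit, and it is a welcome clarification rather than a departure.
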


We shall seek for rotators in $\h^3$ in the class of \emph{helicoidal surfaces},
which are described as follows.
Choose a smooth curve
with trace contained in the horosphere $\mathscr H:=\R^2\times\{1\}$
of height $1:$
\[
s\in\R\mapsto (\alpha(s),1)\in\mathscr H,
\]
where $\alpha\colon\R\rightarrow\R^2$ is a regular curve parameterized by
arc length. Given a constant $h>0,$ we call a parameterized surface
$\Sigma=X(\R^2)\subset\h^3$ a \emph{helicoidal surface} generated by $\alpha$ with \emph{pitch} $h$
if the parameterization $X:\R^2\rightarrow\h^3$ writes as
\begin{equation} \label{eq-parametrizationh3}
X(u,v)=e^{hv}(e^{vJ}\alpha(u),1), \,\,\,(u,v)\in\R^2.
\end{equation}

{Considering a parameterization of $\alpha$ by arc length,
$\alpha(s) = (u(s),v(s),0), \, s\in\R,$ and writing
\begin{equation*}T(s) = u'(s)\partial_x+v'(s) \partial_y,\quad
N(s) = -v'(s)\partial_x+u'(s) \partial_y,\end{equation*}
the curvature of $\alpha$ is given by}
\begin{equation*}{k(s) = \langle\alpha''(s),N(s)\rangle_e = -u''(s)v'(s)+v''(s)u'(s),}\end{equation*}
{where $\langle\,,\,\rangle_e$ stands for the Euclidean metric of $\R^2.$
Furthermore, by the well known Frenet-Serret equations, one has}
\begin{equation*}{T'= kN,\quad N'= -kT.}\end{equation*}

In this setting, if we define the functions
\begin{equation} \label{eq-tau&mu}
\tau:=\langle\alpha,T\rangle_{{e}} \quad\text{and}\quad \mu:=\langle\alpha,N\rangle_{{e}},
\end{equation}
we get from a direct computation that
\begin{equation} \label{eq-euclideanNormal}
\overbar\eta=\rho(e^{vJ}N,-(\tau+h\mu)/h), \,\,\, \rho:=h(h^2+(\tau+h\mu)^2)^{-1/2},
\end{equation}
is an Euclidean unit normal to the helicoidal surface $\Sigma,$
and that its Euclidean mean curvature in this orientation is
\[
\overbar H=e^{-hv}\rho\frac{k((h^2+1)r^2+h^2)-(h\tau-\mu)}{2(h^2+(\tau+h\mu)^2)},
\]
where $r^2:=\tau^2+\mu^2.$ From this equality and \eqref{eq-MCrelation}, we have
that the hyperbolic mean curvature $H$ of $\Sigma$ is
\begin{equation} \label{eq-Hh3}
H=\frac{\rho}{h}\left(h\frac{k((h^2+1)r^2+h^2)-(h\tau-\mu)}{2(h^2+(\tau+h\mu)^2)}-(\tau+h\mu)\right).
\end{equation}

These considerations yield the following existence result,
which brings~\cite[Theorem~3.1]{halldorsson} to $\hn3$.

\begin{theorem} \label{th-prescribedHh3}
For any smooth function $\Psi\colon\R^2\rightarrow\R$ and any
constant $h>0,$ there exists a
one-parameter family of complete
helicoidal surfaces of pitch $h$ in $\h^3$
each of them with mean curvature function $H$ satisfying
\begin{equation*}H(X(u,v))=\Psi(\tau(u),\mu(u)),\end{equation*}
where $X$ is the parameterization given
in~\eqref{eq-parametrizationh3} and $\tau$ and $\mu$
are as in~\eqref{eq-tau&mu}.
\end{theorem}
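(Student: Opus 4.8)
The plan is to reduce the prescribed mean curvature equation to a system of ordinary differential equations for the generating curve and then appeal to the standard existence theory, following the strategy of \cite[Theorem~3.1]{halldorsson}. A helicoidal surface of pitch $h$ as in~\eqref{eq-parametrizationh3} is determined, up to an ambient isometry of $\h^3$, by its generating curve $\alpha$, which in turn is determined up to a rigid motion of $\R^2$ by its (Euclidean, signed) curvature function $k=k(s)$. So I would first differentiate the definitions of $\tau=\langle\alpha,T\rangle_e$ and $\mu=\langle\alpha,N\rangle_e$ in~\eqref{eq-tau&mu}, using the Frenet-Serret relations $T'=kN$, $N'=-kT$, to obtain the first-order system
\[
\tau'=1+k\mu,\qquad \mu'=-k\tau ;
\]
in particular $r^2=\tau^2+\mu^2$ satisfies $(r^2)'=2\tau$.

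Next I would solve~\eqref{eq-Hh3} for $k$. In that formula, $H$ is an affine function of $k$ whose leading coefficient equals $\dfrac{\rho\,\big((h^2+1)r^2+h^2\big)}{2\big(h^2+(\tau+h\mu)^2\big)}$, with $\rho=h\big(h^2+(\tau+h\mu)^2\big)^{-1/2}>0$. Since $h^2+(\tau+h\mu)^2\ge h^2>0$ and $(h^2+1)r^2+h^2\ge h^2>0$ for every $(\tau,\mu)\in\R^2$, this coefficient is everywhere positive; hence the equation $H=\Psi(\tau,\mu)$ can be solved uniquely for $k$ in the form $k=F(\tau,\mu)$, where $F$ is a smooth function on $\R^2$. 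Substituting this back into the system above yields the autonomous, smooth system
\[
\tau'=1+F(\tau,\mu)\,\mu,\qquad \mu'=-F(\tau,\mu)\,\tau \qquad \text{on }\R^2.
\]

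For each initial condition $(\tau(0),\mu(0))=(\tau_0,\mu_0)$, standard ODE theory gives a unique maximal solution $(\tau,\mu)$, which I would then argue is defined on all of $\R$: from $(r^2)'=2\tau$ and $|\tau|\le r$ one gets $|r'|\le 1$ wherever $r>0$ (and $r$ vanishes only at isolated instants, where $\tau'=1$), so $r$ grows at most linearly and the trajectory cannot leave every compact subset of $\R^2$ in finite time. Setting $k(s):=F(\tau(s),\mu(s))$, I would reconstruct an arc-length curve $\alpha\colon\R\to\R^2$ by solving the linear Frenet system with an orthonormal initial frame and with $\alpha(0):=\tau_0\,T(0)+\mu_0\,N(0)$. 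The pair of functions attached to this $\alpha$ via~\eqref{eq-tau&mu} satisfies the same linear non-autonomous system (with coefficient the fixed function $k(s)$) and the same initial data as $(\tau,\mu)$, hence coincides with it by uniqueness; therefore $k=F(\tau,\mu)$ holds along $\alpha$, and~\eqref{eq-Hh3} together with the definition of $F$ gives $H(X(u,v))=\Psi(\tau(u),\mu(u))$ for the helicoidal surface $X$ generated by $\alpha$ as in~\eqref{eq-parametrizationh3}. A short computation with~\eqref{eq-parametrizationh3} shows that $X$ is an immersion: the $v$-derivative of $X$ has a nonzero $\partial_z$-component while its $u$-derivative does not, and the latter never vanishes. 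Completeness of $X(\R^2)$ follows from a routine argument: the $u$-parameter curves are unit-speed for the induced metric, and on each strip $\{\,|u|\le M\,\}$ the metric coefficients are bounded with $g_{vv}$ bounded below away from zero, so no divergent curve has finite length.

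Finally, letting the initial data $(\tau_0,\mu_0)$ range over $\R^2$: two initial data lying on a common trajectory of the autonomous system produce, after reparameterization, generating curves whose curvature functions differ only by a shift of the arc-length parameter; such curves differ by a rigid motion of $\R^2$, and one checks (using that the unit tangent of a generating curve which is not a straight line assumes more than one direction, the straight-line case being handled directly) that this rigid motion has no translational part, hence is a rotation about the origin and so induces an ambient isometry of the corresponding helicoidal surfaces. Thus the surfaces obtained are parameterized by the set of trajectories of the autonomous system, a one-parameter family. I do not anticipate a serious obstacle: the essential ingredient is the mean curvature formula~\eqref{eq-Hh3}, which is already at hand, and the only step deserving some care is the algebraic inversion for $k$, which is legitimate precisely because $(h^2+1)r^2+h^2$ and $h^2+(\tau+h\mu)^2$ are bounded below by $h^2>0$ on all of $\R^2$.
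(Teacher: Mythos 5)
Your proposal is correct and follows essentially the same route as the paper: solve \eqref{eq-Hh3} for $k$ (legitimate because the coefficient of $k$ there is everywhere positive, as both $(h^2+1)r^2+h^2$ and $h^2+(\tau+h\mu)^2$ are bounded below by $h^2$), obtaining $k$ as a smooth function of $(\tau,\mu)$ on all of $\R^2$, and then invoke the existence of a one-parameter family of plane curves with curvature prescribed as a function of $(\tau,\mu)$. The only difference is that the paper disposes of the second step by citing \cite[Lemma 3.2]{halldorsson}, whereas you reprove that lemma inline (global existence of the $(\tau,\mu)$-system, reconstruction of $\alpha$ via the Frenet equations, completeness, and identification of congruent generating curves), which is more self-contained but not a different argument.
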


\begin{proof}
 Considering equality \eqref{eq-Hh3} for the given function $H=H(\tau,\mu)$ and solving for
 $k,$ we have that $k=k(\tau,\mu)$ is a smooth function of $(\tau,\mu)\in\R^2.$
 However, by \cite[Lemma 3.2]{halldorsson}, there exists a one-parameter family of plane curves
 $\alpha:\R\rightarrow\R^2,$ each of them
 with curvature $k.$ Therefore, for such an $\alpha,$ and for a given $h>0,$
 the helicoidal surface of $\h^3$ with pitch
 $h$ whose generating curve is $\alpha$
 has mean curvature function $H=H(\tau,\mu),$ as we wished to prove.
\end{proof}

Now, we verify the conditions under which a helicoidal surface
$\Sigma=X(\R^2)$ of $\h^3$ is a
rotator to MCF. {By \eqref{eq-parametrizationh3}--\eqref{eq-euclideanNormal},}
\begin{align*}
\langle J\pi(X),\eta(X)\rangle &= {\langle J(e^{hv}e^{vJ}\alpha),e^{hv}\overbar\eta(X)\rangle}=
{\langle Je^{vJ}\alpha,\overbar\eta(X)\rangle_e}\\
&= {\langle e^{vJ}J\alpha,\rho e^{vJ}N\rangle_e=\rho\langle J\alpha,N\rangle_e}\\
&= {-\rho\langle\alpha,JN\rangle_e=\rho\langle\alpha,T\rangle_e}\\
&= \rho\tau,
\end{align*}
which, together with~\eqref{eq-translatorH300},
implies the following result.

\begin{lemma} \label{lem-conditionrotatorH3}
A helicoidal surface $\Sigma=X(\R^2)$ of pitch $h>0$
parameterized as in~\eqref{eq-parametrizationh3}
is a rotator to MCF in $\h^3$ if and only if
its mean curvature function
$H=H(\tau,\mu)$ satisfies
\begin{equation} \label{eq-hyperbolicH02}
H=\frac{h\tau}{\sqrt{h^2+(h\mu+\tau)^2}}.
\end{equation}
\end{lemma}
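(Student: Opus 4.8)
The plan is to read off the result from the rotator equation \eqref{eq-translatorH300} together with the computation of $\langle J\pi(X),\eta(X)\rangle$ that immediately precedes the statement. By definition, the helicoidal surface $\Sigma=X(\R^2)$ is a rotator to MCF in $\h^3$ precisely when $H(X(u,v))=\langle J\pi(X(u,v)),\eta(X(u,v))\rangle$ holds for every $(u,v)\in\R^2$. So the whole content of the lemma is to identify the right-hand side of this equation with the right-hand side of \eqref{eq-hyperbolicH02}.

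First I would carry out the bookkeeping that turns $\langle J\pi(X),\eta(X)\rangle$ into $\rho\tau$, exactly as in the displayed chain of equalities above: writing $X(u,v)=e^{hv}(e^{vJ}\alpha(u),1)$, one has $\pi(X)=e^{hv}e^{vJ}\alpha$ and $\eta(X)=e^{hv}\overbar\eta(X)$ by the relation $\eta=z\bar\eta$ from Section~\ref{sec-preliminaries}, so the conformal factor $e^{2hv}$ of the hyperbolic metric cancels against the two factors $e^{hv}$; then the elementary facts that $J$ commutes with $e^{vJ}$, that $e^{vJ}\in SO(2)$ is a Euclidean isometry, that $J$ is skew-symmetric, and that $JN=-T$ reduce the expression to $\rho\langle\alpha,T\rangle_e=\rho\tau$, with $\rho=h(h^2+(\tau+h\mu)^2)^{-1/2}$ as in \eqref{eq-euclideanNormal}. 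Substituting this value of $\rho$ into $\rho\tau$ gives exactly $h\tau/\sqrt{h^2+(h\mu+\tau)^2}$.

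Consequently, the rotator condition $H=\langle J\pi(X),\eta(X)\rangle$ is equivalent to $H=h\tau/\sqrt{h^2+(h\mu+\tau)^2}$, which is \eqref{eq-hyperbolicH02}, and this proves both implications simultaneously. There is no real obstacle here: the statement is a direct corollary of the preliminary computations of $\overbar\eta$, of $H$ via \eqref{eq-Hh3}, and of $\langle J\pi(X),\eta(X)\rangle$, and the only mild care needed is in tracking the conformal factors and the orthogonality relations among $T$, $N$, $\alpha$, and $J$.
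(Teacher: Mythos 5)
Your proposal is correct and follows exactly the paper's route: the lemma is read off from the rotator equation \eqref{eq-translatorH300} combined with the displayed computation $\langle J\pi(X),\eta(X)\rangle=\rho\tau$, and substituting $\rho=h(h^2+(\tau+h\mu)^2)^{-1/2}$ gives \eqref{eq-hyperbolicH02}. The bookkeeping you describe (cancellation of the conformal factors, $J$ commuting with $e^{vJ}$, skew-symmetry of $J$, and $JN=-T$) is precisely the chain of equalities the paper uses.
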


In what follows, we prove the main result of this section, which provides the existence
of complete rotators in $\h^3$ by means of helicoidal surfaces, and
completely describe the topology of the corresponding generating curves (see Figure \ref{fig-generating-rotator}).

\begin{figure}[htbp]
\includegraphics[scale=0.5]{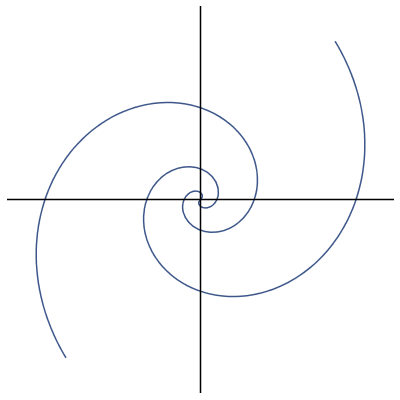}
\caption{\small The generating curve of a helicoidal rotator in $\h^3$.}
\label{fig-generating-rotator}
\end{figure}

\begin{theorem} \label{th-MCFh3}
For any $h>0,$ there exists a one-parameter family of complete
rotators to MCF in $\h^3$ whose elements are all helicoidal surfaces of pitch $h.$
For each such surface, the trace of the generating curve
$\alpha\colon\R\to\R^2$
consists of two unbounded properly embedded arms
centered at the point of $\alpha$ which is closest to the
origin $o\in\R^2,$ with each arm spiraling around $o.$
\end{theorem}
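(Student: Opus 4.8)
The plan is to translate the rotator condition from Lemma~\ref{lem-conditionrotatorH3} into an autonomous first-order ODE system for the generating curve and then extract the qualitative picture by a phase-plane analysis, following the strategy of Halldorsson in~$\R^3$ but adapted to the hyperbolic mean curvature expression~\eqref{eq-Hh3}. First I would substitute~\eqref{eq-hyperbolicH02} into~\eqref{eq-Hh3} and solve algebraically for the curvature $k$ as a function of $(\tau,\mu)$; since the right-hand side of~\eqref{eq-Hh3} is a smooth function of $(\tau,\mu,k)$ that is affine in $k$ with nonvanishing coefficient $\rho h((h^2+1)r^2+h^2)/(2(h^2+(\tau+h\mu)^2))$, this yields a smooth $k=k(\tau,\mu)$ on all of $\R^2$. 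Combining the Frenet equations $T'=kN,\ N'=-kT$ with the definitions $\tau=\langle\alpha,T\rangle_e$, $\mu=\langle\alpha,N\rangle_e$ gives $\tau'=1+k\mu$ and $\mu'=-k\tau$ (using $\alpha'=T$), an autonomous planar system. By the theorem hypothesis $h>0$ is fixed; existence and completeness of $\alpha\colon\R\to\R^2$ follow because the vector field is smooth and, as I would check, has at most linear growth, so solutions are defined for all $s\in\R$, giving the one-parameter family (parameter = initial condition up to the obvious symmetry / arc-length reparameterization).

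Next I would analyze the orbits of the system $(\tau',\mu')=(1+k\mu,-k\tau)$ in the $(\tau,\mu)$-plane. The key observations are: the function $r^2=\tau^2+\mu^2$ satisfies $(r^2)' = 2\tau\tau'+2\mu\mu' = 2\tau(1+k\mu)+2\mu(-k\tau)=2\tau$, so $r$ is strictly increasing exactly where $\tau>0$ and strictly decreasing where $\tau<0$; in particular $r$ attains a unique minimum at the single zero of $\tau$ along the orbit, which corresponds to the point of $\alpha$ closest to the origin $o$. This already splits $\alpha$ into the two ``arms'' $\{s>s_0\}$ and $\{s<s_0\}$ emanating from that closest point, and shows each arm has $r\to\infty$ (properness and unboundedness), once one rules out $r$ tending to a finite limit — which follows because on an arm $\tau$ is bounded away from $0$ on any region where $r$ stays bounded, forcing $r$ to grow linearly. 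The spiraling is then read off from the angular variable: writing $\alpha = r(\cos\psi,\sin\psi)$, one computes $\psi' = \langle\alpha,N\rangle_e/r^2 = \mu/r^2$, and one must show $\psi$ is eventually monotone with $\int^{\pm\infty}|\psi'|\,ds=\infty$, i.e. the total turning is infinite, which gives the spiral; this requires controlling the sign and decay rate of $\mu/r^2$ along each arm for large $|s|$.

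The main obstacle I expect is precisely this last point — establishing that each arm genuinely spirals (infinite winding around $o$) rather than, say, escaping to infinity along an asymptotic direction. Concretely one needs the asymptotics of $k(\tau,\mu)$ for large $r$: from~\eqref{eq-Hh3} and~\eqref{eq-hyperbolicH02}, solving for $k$ should give $k\sim c/r^2 \cdot(\text{something bounded})$ for a constant $c$ depending on $h$ — if instead $k$ were to decay too fast the curve could straighten out. Having the right leading-order behavior of $k$, I would show that along an arm $\mu$ stays bounded while $\tau\to+\infty$ linearly in $s$ (so $r\sim s$), hence $\tau' = 1+k\mu \to 1$; then $\mu' = -k\tau \sim -k s$, and one must argue $\mu$ oscillates with amplitude not decaying to zero, equivalently that $\int k\,ds$ diverges — which it does, since $k\sim c/s$ makes $\int k\,ds$ logarithmically divergent, forcing infinitely many sign changes of $\mu$ and therefore of $\psi'$, i.e.\ infinitely many half-turns around $o$. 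Assembling these estimates — smoothness of $k$, global existence, the $(r^2)'=2\tau$ identity pinning down the unique closest point, and the logarithmic divergence of total turning on each arm — completes the proof of all three assertions (complete one-parameter family, two properly embedded unbounded arms, spiraling around $o$).
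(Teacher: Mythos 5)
Your setup coincides with the paper's: solve \eqref{eq-Hh3} together with \eqref{eq-hyperbolicH02} for a smooth $k=k(\tau,\mu)$ on $\R^2$, pass to the autonomous system $\tau'=1+k\mu$, $\mu'=-k\tau$, and use the identity $(r^2)'=2\tau$ to pin down a unique closest point and split $\alpha$ into two arms. Two cautions already at this stage: the vector field does \emph{not} have at most linear growth (the numerator of $k$ in \eqref{eq-hyperbolick} is cubic in $(\tau,\mu)$ against a quadratic denominator, so $k=O(r)$ and $k\mu$, $k\tau$ are $O(r^2)$); global existence should instead come from the a priori bound $|(r^2)'|=2|\tau|\le 2r$. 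Also, ruling out $r\to\delta<\infty$ requires an argument: the paper notes that $r^2\to\delta$ forces $\tau=(r^2)'/2\to 0$, while the first equation of \eqref{eq-hypODEsystem} then forces $\tau'$ to a positive limit, a contradiction.

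The genuine gap is in your asymptotic analysis, on which the spiraling assertion entirely rests. You posit that along each arm $\mu$ stays bounded, $k\sim c/s$, and the spiral is produced by infinitely many sign changes of $\mu$, hence of $\psi'$. This fails on two counts. First, the system forbids $\mu$ from having a finite limit: if $\mu\to L\in\R$ while $\tau\to+\infty$, the second equation of \eqref{eq-hypODEsystem} gives $\mu'\sim -\tfrac{2(h+1)}{h(h^2+1)}\,\tau^2\to-\infty$, contradicting convergence. In fact the paper shows $k$ has at most one zero, so $\mu'=-k\tau$ has at most two zeros, $\mu$ is eventually monotone, and $\lim_{s\to\pm\infty}\mu(s)=\mp\infty$ with $-\tau/\mu$ bounded; there is no oscillation. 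Second, even if your picture held, infinitely many sign changes of $\psi'$ do not yield infinite winding --- they are perfectly compatible with $\psi$ remaining bounded, i.e.\ with no spiral at all. The correct mechanism is the opposite one: $\psi'=-\mu/r^2$ eventually has a \emph{fixed} sign, and its integral diverges because $|\mu|\ge cr$ (a consequence of the boundedness of $-\tau/\mu$) while $r$ grows at most linearly in arc length; the paper packages this as $d(\log\log r)/d\omega\to 0$, whence $\omega\to+\infty$. Without first proving $|\mu|\to\infty$ and the bound on $-\tau/\mu$, the spiraling claim is unsubstantiated.
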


\begin{proof}
The existence part of the statement follows directly from
Lemma~\ref{lem-conditionrotatorH3} and
Theorem~\ref{th-prescribedHh3}.
So, it remains to prove that the generating curve $\alpha$
of any such helicoidal surface
has the asserted geometric properties.

Keeping the above notation, we first observe that,
from equalities \eqref{eq-Hh3} and \eqref{eq-hyperbolicH02},
the curvature $k$ of $\alpha$ satisfies:
\begin{equation} \label{eq-hyperbolick}
k=\frac{2(h^2+(\tau+h\mu)^2)((h+1)\tau+h\mu)+h(h\tau-\mu)}{h((h^2+1)r^2+h^2)}\cdot
\end{equation}
Also, from \eqref{eq-tau&mu} and the Frenet-Serret equations, one has
\begin{equation} \label{eq-frenet}
\tau'=1+k\mu \quad\text{and}\quad \mu'=-k\tau,
\end{equation}
which, together with \eqref{eq-hyperbolick}, yields the ODE system (see Fig. \ref{fig-phaseportraithyp})
\begin{equation}\label{eq-hypODEsystem}
\left\{
\begin{array}{ccl}
\tau' & = & \displaystyle 1+\frac{2(h^2+(\tau+h\mu)^2)((h+1)\tau\mu+h\mu^2)+h^2\tau\mu-h\mu^2}{h((h^2+1)r^2+h^2)},\\[3ex]
\mu' & = & \displaystyle -\frac{2(h^2+(\tau+h\mu)^2)((h+1)\tau^2+h\tau\mu)+h^2\tau^2-h\tau\mu}{h((h^2+1)r^2+h^2)}\cdot
\end{array}
\right.
\end{equation}

\begin{figure}[htbp]
\includegraphics[scale=0.5]{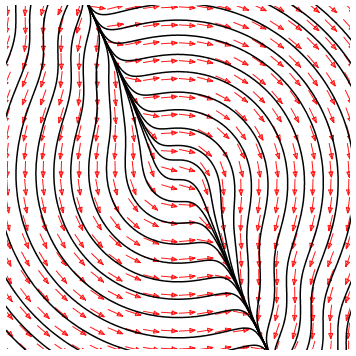}
\caption{\small Phase portrait of system \eqref{eq-hypODEsystem} for $h=1.$}
\label{fig-phaseportraithyp}
\end{figure}

Now, we establish the properties of $\alpha=\tau T+\mu N$ through the following
claims.

\begin{claim} \label{claim-noconstantsolutions}
The ODE system \eqref{eq-hypODEsystem} has no constant solutions, and all solutions are defined
on $\R.$
\end{claim}
\begin{proof}[Proof of Claim~\ref{claim-noconstantsolutions}]
Assume, by contradiction, that $\psi(s)=(\tau_0,\mu_0)$, $s\in\R$
is a constant solution. Since $\tau'=\mu'=0,$ we have
from \eqref{eq-frenet} that $k_0:=k(\tau_0,\mu_0)$ satisfies
$k_0\mu_0=-1$ and $k_0\tau_0=0,$ which yields $\tau_0=0$
and $\mu_0\ne0.$ However, from the first equation in \eqref{eq-hypODEsystem},
one has
\[
\tau'=1+\frac{2h(1+\mu_0^2)\mu_0^2-\mu_0^2}{(h^2+1)\mu_0^2+h^2}=\frac{h^2(\mu_0^2+1)+2h(1+\mu_0^2)\mu_0^2}{(h^2+1)\mu_0^2+h^2}>0,
\]
which is a contradiction. Therefore, \eqref{eq-hypODEsystem} has no constant solutions.
From this fact, and since $k=k(\tau,\mu)$ is defined on $\R^2,$ we conclude that any solution
of \eqref{eq-hypODEsystem} is defined on $\R.$
\end{proof}

\begin{claim} \label{claim-reverselimit}
Suppose that any integral curve $\psi(s):=(\tau(s),\mu(s))$
of~\eqref{eq-hypODEsystem} satisfies that the limit
$\lim_{s\to+\infty}\tau(s)$ (resp. $\lim_{s\to+\infty}\mu(s)$)
exists. Then, $\lim_{s\to-\infty}\tau(s)$ (resp.
$\lim_{s\to-\infty}\mu(s)$) also exists. Furthermore,
if there exists some $L\in[-\infty,+\infty]$ with the property
that for any integral curve
\[
\lim_{s\to+\infty}\tau(s)=L \,\,\, (\text{resp.}\,\, \lim_{s\to+\infty}\mu(s)=L),
\]
then it also holds that any integral curve satisfies
\[
\lim_{s\to-\infty}\tau(s)=-L \,\,\, (\text{resp.}\,\, \lim_{s\to-\infty}\mu(s)=-L).
\]
\end{claim}
\begin{proof}[Proof of Claim~\ref{claim-reverselimit}]
Let $\psi(s):=(\tau(s),\mu(s))$ be an integral curve
of the system \eqref{eq-hypODEsystem}. Then,
it is easily checked that $\overbar\psi(s):=-\psi(-s)$ is also
an integral curve of that system.
Setting $\overbar\psi=(\overbar\tau,\overbar\mu),$
we have that $\overbar\tau(s)=-\tau(-s)$
and $\overbar\mu(s)=-\mu(-s).$ By hypothesis,
$\lim_{s\to +\infty}\overbar\tau(s)$ exists and the first
part of the claim follows from observing that
$\lim_{s\to -\infty}\mu(s)
= -\lim_{s\to+\infty} \overbar\mu(s)$.
The remainder of the proof is argued analogously and will
be omitted.
\end{proof}

\begin{claim} \label{claim-tauhasonezero}
The function $\tau$ has precisely one zero $s_0$ and
$\tau$ is negative in $(-\infty, s_0)$ and positive in
$(s_0,+\infty).$ As a consequence, the function
$r^2=\tau^2+\mu^2$ has a global minimum and satisfies
$\lim_{s\rightarrow\pm\infty}r^2=+\infty.$
\end{claim}

\begin{proof}[Proof of Claim~\ref{claim-tauhasonezero}]
First, observe that the equalities~\eqref{eq-frenet} yield
\[
(r^2)'=2(\tau\tau'+\mu\mu')=2(\tau(1+k\mu)+\mu(-k\tau))=2\tau,
\]
which implies that the zeroes of $\tau$ are the critical points
of $r^2$. Also, as seen in the first part of the proof of Claim \ref{claim-noconstantsolutions},
if $\tau(s_0)=0$ for some
$s_0,$ then $\tau'(s_0)>0,$
which gives that $\tau$ has at most one zero $s_0,$
in which case $\tau$ is
negative in $(-\infty, s_0),$ and positive in
$(s_0,+\infty).$

Next, we argue by contradiction and
assume that $\tau$ has no zeroes.
We will also assume that $\tau>0$ on $\R,$ since
the complementary case $\tau<0$ can be treated analogously.
Under this assumption, the function
$r^2$ is strictly increasing. So, there exists $\delta\ge 0$ such that
\begin{equation*}
\lim_{s\to -\infty}r^2(s)=\delta.
\end{equation*}
In particular, since $\tau=\frac{(r^2)'}{2}$, we also have that
\begin{equation}\label{eqtaugoestozero}
\lim_{s\to -\infty}\tau(s)=0,
\end{equation}
which implies that $\mu^2\rightarrow\delta$ as $s\rightarrow-\infty.$
However, the first equality in \eqref{eq-hypODEsystem} yields
$\lim_{s\rightarrow-\infty}\tau'(s)>0,$
which contradicts~\eqref{eqtaugoestozero}, proving
that $\tau$ has exactly one
zero and that $r^2$ has only one critical point. Consequently,
both the limits of
$r^2$ as $s\rightarrow\pm\infty$ exist in $[0,+\infty]$.

To finish the proof of the claim, just note that
if either $\lim_{s\to -\infty}r^2=\delta$ or
$\lim_{s\to +\infty}r^2=\delta$ for some
$\delta>0,$ the same arguments as before
lead to a contradiction, thus
$\lim_{s\to \pm\infty}r^2(s)=+\infty$.
\end{proof}

\begin{claim} \label{claim1}
The limits of $\tau$ and $\mu$ as $s\rightarrow\pm\infty$ exist (possibly being infinite).
\end{claim}
\begin{proof}[Proof of Claim~\ref{claim1}]
First, we show that $k$ has at most one zero in $\R.$
Assume that $k(s_0)=0$ for some $s_0\in\R.$ We have from
\eqref{eq-hyperbolick} that, at $s_0,$
\begin{equation} \label{eq-forkzero}
2(h^2+(\tau+h\mu)^2)((h+1)\tau+h\mu)+h(h\tau-\mu)=0.
\end{equation}
Also, by~\eqref{eq-frenet}, $\tau'(s_0)=1$ and $\mu'(s_0)=0.$ This, together with
\eqref{eq-forkzero}, gives that, at $s_0,$
\begin{equation} \label{eq-k'}
k'=\frac{2(h+1)(h^2+(\tau+h\mu)^2)+4((h+1)\tau+h\mu)(\tau+h\mu)+h^2}{h((h^2+1)r^2+h^2)}\cdot
\end{equation}

If $\tau(s_0)\mu(s_0)\ge 0,$ we have from \eqref{eq-k'} that
$k'(s_0)>0.$ Assume then $\tau(s_0)\mu(s_0)<0$ and notice that, by~\eqref{eq-forkzero},
one has
\begin{equation} \label{eq-sign}
{\rm sign}((h+1)\tau(s_0)+h\mu(s_0))={\rm sign}(\mu(s_0)-h\tau(s_0)).
\end{equation}
If $\tau(s_0)<0<\mu(s_0),$ then both signs in~\eqref{eq-sign} are
positive. In addition,
\begin{equation*}\tau(s_0)+h\mu(s_0)=(h+1)\tau(s_0)+h\mu(s_0)-h\tau(s_0)>0,\end{equation*}
and then \eqref{eq-k'} yields $k'(s_0)>0.$ Analogously,
$\mu(s_0)<0<\tau(s_0)$ implies $k'(s_0)>0.$

It follows from the above that $k$ has at most one zero $s_0\in\R$ and, if so,
$k$ is negative in $(-\infty, s_0)$ and positive in $(s_0,+\infty).$
Since, by Claim \ref{claim-tauhasonezero}, $\tau$ has exactly one zero,
we have that $\mu'=-k\tau$ has at most two zeros,
which implies that $\mu$ has at most two critical points. In particular,
the limits $\lim_{s\to\pm\infty}\mu(s)$ exist.

To finish the proof of the claim,
let us assume, by contradiction, that the limit of
$\tau$ as $s\rightarrow+\infty$ does not exist.
In this case, for some $\tau_0>0,$ there exists a
strictly increasing
sequence $(s_n)_{n\in\N}$ diverging to $+\infty$ and
such that (see Fig. \ref{fig-taugraph})
\[
\tau(s_n)=\tau_0 \quad\text{and}\quad \tau'(s_n)\tau'(s_{n+1})<0 \quad \forall n\in\N.
\]

\begin{figure}[htbp]
\includegraphics{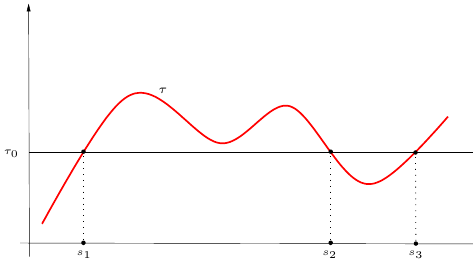}
\caption{\small Graph of $\tau.$}
\label{fig-taugraph}
\end{figure}

Claim~\ref{claim-tauhasonezero} implies that
$\lim r^2(s_n) = +\infty$, then we must have
$\lim\mu^2(s_n)= +\infty$.
In this case, our previous
arguments show that either
$\lim\mu(s_n)= +\infty$ or
$\lim\mu(s_n)= -\infty$.
In any case, we have from \eqref{eq-hyperbolick} that
\begin{equation*}\lim_{n\to +\infty} (k(s_n)\mu(s_n))=
\lim_{n\to +\infty} \frac{2h^3\mu(s_n)^4}{h(h^2+1)\mu(s_n)^2}
=+\infty.\end{equation*}
In particular, for any sufficiently large $n\in\N,$
$\tau'(s_n) =1+k(s_n)\mu(s_n)>0$. This, however,
contradicts the fact that $(\tau'(s_n))_{n\in\N}$
is an alternating sequence.
Therefore, $\lim_{s\to+\infty}\tau(s)$ exists.
Since $(\tau,\mu)$ is an arbitrary integral curve
of~\eqref{eq-hypODEsystem}, Claim~\ref{claim-reverselimit}
implies that $\lim_{s\to-\infty}\tau(s)$ also exists,
thereby finishing the proof of the claim.
\end{proof}

\begin{claim} \label{claim-taumulimitsH3}
$\lim_{s\to \pm\infty}\tau(s)=\pm\infty$ and\, $\lim_{s\to \pm\infty}\mu(s)=\mp\infty.$
\end{claim}
\begin{proof}[Proof of Claim~\ref{claim-taumulimitsH3}]
By Claim~\ref{claim1}, all the limits above exist and,
arguing by contradiction, we first treat the case
when $\lim_{s\to +\infty}\mu(s) = L\in \R$.
Under this assumption, we have from
Claims~\ref{claim-tauhasonezero} and~\ref{claim1} that
$\lim_{s\to +\infty} \tau(s)=+\infty$.
Then, it follows from the second equation
in~\eqref{eq-hypODEsystem} that
$\lim_{s\to +\infty}\mu'(s)= -\infty$, which
contradicts the assumed fact $L\in \R$.

{
Suppose now that $\lim_{s\to+\infty}\mu(s)=+\infty$.
From this assumption and Claim \ref{claim-tauhasonezero}, we have that
$h^2+(\tau(s)+h\mu(s))^2>{1/2}$ for all sufficiently large $s>0.$
Applying this last inequality to \eqref{eq-hyperbolick} yields
\[
k(s)>\frac{(h+1)\tau(s)+h\mu(s)+h(h\tau(s)-\mu(s))}{h((h^2+1)r^2+h^2)}=\frac{(h^2+h+1)\tau(s)}{h((h^2+1)r^2+h^2)}>0.
\]
However, for such values of $s,$
$\mu'(s)=-k(s)\tau(s)<0,$ which is a contradiction.
Therefore, $\lim_{s\to+\infty}\mu(s)=-\infty$.
Since $(\tau,\mu)$ is an arbitrary integral curve,
Claim~\ref{claim-reverselimit} applies to show that
$\lim_{s\to-\infty}\mu(s)=+\infty.$}

To finish the proof, assume, by contradiction, that
$\lim_{s\to+\infty}\tau(s)$ is finite. Then, since $\lim_{s\to+\infty}\mu(s)=-\infty,$
we have from \eqref{eq-hyperbolick} that
$\lim_{s\to+\infty}k(s)=-\infty.$ From this, we have
$\lim_{s\to+\infty}\tau'(s)=\lim_{s\to+\infty}(1+k(s)\mu(s))=+\infty,$
which is a contradiction.
Therefore, {Claim~\ref{claim-tauhasonezero} gives that}
$\lim_{s\to+\infty}\tau(s)=+\infty$. Once again,
$\lim_{s\to-\infty}\tau(s)=-\infty$ follows from
Claim~\ref{claim-reverselimit}.
\end{proof}

\begin{claim} \label{claim-tau/mulimitedH3}
The function $\nu:=-\tau/\mu$ is bounded outside of a compact interval.
\end{claim}

\begin{proof}[Proof of Claim~\ref{claim-tau/mulimitedH3}]
It follows from Claim \ref{claim-taumulimitsH3} that $\nu$ is well defined and positive at all points
outside of a compact interval of $\R.$
Assume by contradiction that there exists a sequence $(s_n)_{n\in\N}$ in $\R$ which
diverges to infinity, and such that $\lim\nu(s_n)=+\infty,$ i.e.,
$\lim(-\mu(s_n)/\tau(s_n))=0.$

From \eqref{eq-hyperbolick},
one has {
\[
k\tau=\frac{2(h^2+(\tau+h\mu)^2)(h+1+h\mu/\tau)+h^2-h\mu/\tau}{h((h^2+1)(1+\mu^2/\tau^2)+h^2/\tau^2)}\,\cdot
\]
}
{Hence, after passing to a subsequence, we can assume
that $k(s_n)\tau(s_n)$ is positive and bounded away from zero for all $n\in\N.$ However,}
{\[
+\infty=\lim\nu(s_n)=\lim\frac{\phantom-\tau(s_n)}{-\mu(s_n)}=\lim\frac{\phantom-\tau'(s_n)}{-\mu'(s_n)}=
\lim\left(\frac{1}{k(s_n)\tau(s_n)}+\frac{\mu(s_n)}{\tau(s_n)}\right)<+\infty,
\]}
{which is a contradiction. }

Analogously, we derive a contradiction by assuming that there exists
$s_n\rightarrow-\infty$ satisfying $\lim\nu(s_n)=+\infty.$
This proves Claim \ref{claim-tau/mulimitedH3}.
\end{proof}

In what follows, we shall denote by $\omega=\omega(s)$ the angle function of
$\alpha,$ that is,
\[
\alpha=r(\cos\omega,\sin\omega).
\]
It then follows from~\eqref{eq-frenet} that the equality
\begin{equation} \label{eq-Tandomega'}
T=\frac{\tau}{r^2}\alpha+\omega'J\alpha
\end{equation}
holds at all points where $r\ne 0.$

\begin{claim} \label{claim-infiniteangleH3}
$\omega(s)\rightarrow+\infty$ as $s\rightarrow\pm\infty.$
\end{claim}
\begin{proof}[Proof of Claim~\ref{claim-infiniteangleH3}]
Considering \eqref{eq-Tandomega'} and the equality $(r^2)'=2\tau,$ we have that
\[
r'=\frac{\tau}{r} \quad\text{and}\quad \omega'=-\frac{\mu}{r^2}\,\cdot
\]
So, given a differentiable function $\varphi=\varphi(r),$ $r\in (0,+\infty),$ one has
\begin{equation} \label{eq-derivativevarphiH3}
\frac{d\varphi}{d\omega}=\frac{d\varphi}{dr}\frac{dr}{ds}\frac{ds}{d\omega}=-r\varphi'(r)\frac{\tau}{\mu}\cdot
\end{equation}

Now, define $\varphi(r)=\log(\log r).$ Then, $\varphi(r)\rightarrow+\infty$ as $r\rightarrow+\infty$ and
\begin{equation} \label{eq-rvarphi'H3}
r\varphi'(r)=\frac1{\log r}\rightarrow 0 \,\,\, \text{as} \,\,\, r\rightarrow+\infty.
\end{equation}
Since, by Claim \ref{claim-tau/mulimitedH3}, $-\tau/\mu$ is bounded outside of a compact interval,
it follows from Claim \ref{claim-tauhasonezero}
and \eqref{eq-derivativevarphiH3}--\eqref{eq-rvarphi'H3} that ${d\varphi}/{d\omega}\rightarrow 0$
as $s\rightarrow\pm\infty.$ Consequently,
${d\omega}/d\varphi\rightarrow+\infty$ as $s\rightarrow\pm\infty,$
which proves Claim \ref{claim-infiniteangleH3}.
\end{proof}

It follows from the above that
the trace of $\alpha$ has one point $p_0$ closest to the origin
$o$ (Claim \ref{claim-tauhasonezero}),
and consists of two properly embedded arms centered at
$p_0$ (Claim~\ref{claim-taumulimitsH3}) which proceed to infinity by spiraling around $o$
(Claim~\ref{claim-infiniteangleH3}).
This finishes our proof.
\end{proof}

Let $\Sigma=X(\R^2)$ be a helicoidal surface of pitch $h$ in $\h^3$
as given in~\eqref{eq-parametrizationh3}. Consider the subgroup
$\mathcal G=\{\Gamma_t\mid t\in\R\}\subset{\rm Iso}(\h^3)$ of
(downward) hyperbolic translations of
constant speed $h,$ i.e., $\Gamma_t(p)=e^{-ht}p,$
and notice that the Killing field on $\h^3$ determined by $\mathcal G$ is $\xi(p)=-hp.$
Now, recall that the unit normal to $\Sigma$ is $\eta=e^{hv}\bar\eta,$ with
$\bar\eta$ as in \eqref{eq-euclideanNormal}. From this, we have:
\[
\langle\xi(X),\eta\rangle=-h\rho\left(\mu-\frac{\tau+h\mu}{h}\right)=\rho\tau,
\]
so that $\Sigma$ is a $\mathcal G$-soliton if and only if
its mean curvature function is given by
\[
H=\rho\tau=\frac{h\tau}{\sqrt{h^2+(h\mu+\tau)^2}}\cdot
\]

From this last equality and Lemma \ref{lem-conditionrotatorH3}, we have the following result.

\begin{proposition}
Let $\Sigma=X(\R^2)$ be a helicoidal surface of pitch $h$ in $\h^3.$
Then, the following assertions are equivalent:
\begin{itemize}[parsep=1ex]
\item[\rm i)]$\Sigma$ is a rotator to MCF.
\item[\rm ii)] $\Sigma$ is a translator to MCF with respect to the Killing field $\xi(p)=-hp.$
\end{itemize}

\end{proposition}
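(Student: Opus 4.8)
The plan is to note that, thanks to two computations already carried out in this section, each of the assertions (i) and (ii) is equivalent to one and the same prescription for the mean curvature function of $\Sigma$, namely
\begin{equation*}
H=\frac{h\tau}{\sqrt{h^2+(h\mu+\tau)^2}}\,,
\end{equation*}
whence the equivalence follows immediately.

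For (i), this is precisely Lemma~\ref{lem-conditionrotatorH3}: a helicoidal surface of pitch $h$ parameterized as in~\eqref{eq-parametrizationh3} is a rotator to MCF in $\h^3$ if and only if its mean curvature satisfies the displayed equation, i.e.~\eqref{eq-hyperbolicH02}. For (ii), I would first identify the Killing field generating the one-parameter group $\mathcal G=\{\Gamma_t\}$ of downward hyperbolic translations of constant speed $h$, $\Gamma_t(p)=e^{-ht}p$; under the usual identification $p=(x,y,z)\leftrightarrow x\partial_x+y\partial_y+z\partial_z$ this field is $\xi(p)=-hp$. By the soliton criterion~\eqref{eq-main}, $\Sigma=X(\R^2)$ is a $\mathcal G$-soliton --- that is, a translator to MCF with respect to $\xi$ --- if and only if $H=\forma{\xi(X),\eta}$ holds everywhere on $\Sigma$. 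Recalling from~\eqref{eq-euclideanNormal} that the hyperbolic unit normal of $\Sigma$ is $\eta=e^{hv}\bar\eta$ with $\bar\eta=\rho(e^{vJ}N,-(\tau+h\mu)/h)$, and using $X(u,v)=e^{hv}(e^{vJ}\alpha(u),1)$ together with the conformal factor $z^{-2}=e^{-2hv}$ of the hyperbolic metric along $\Sigma$, a direct computation (the rotation $e^{vJ}$ being a Euclidean isometry, so that $\forma{e^{vJ}\alpha,e^{vJ}N}_e=\forma{\alpha,N}_e=\mu$) yields
\begin{equation*}
\forma{\xi(X),\eta}=-h\rho\left(\mu-\frac{\tau+h\mu}{h}\right)=\rho\tau=\frac{h\tau}{\sqrt{h^2+(h\mu+\tau)^2}}\,.
\end{equation*}
Thus (ii) is equivalent to the same mean curvature equation as well, and comparing the two characterizations completes the proof.

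The only actual computation involved is that of $\forma{\xi(X),\eta}$, which is short and elementary given~\eqref{eq-parametrizationh3} and~\eqref{eq-euclideanNormal}; I do not expect any genuine obstacle here. The conceptual content of the statement is simply that, for helicoidal surfaces, the rotator equation~\eqref{eq-translatorH300} and the translator equation with respect to the vertical Killing field $\xi(p)=-hp$ both reduce to the identical prescription~\eqref{eq-hyperbolicH02} on the mean curvature.
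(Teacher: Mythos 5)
Your proposal is correct and is essentially identical to the paper's argument: the paper likewise computes $\forma{\xi(X),\eta}=-h\rho\bigl(\mu-\tfrac{\tau+h\mu}{h}\bigr)=\rho\tau$ for the Killing field $\xi(p)=-hp$ and then concludes by comparison with Lemma~\ref{lem-conditionrotatorH3}. No issues.
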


\section{The Classification of Minimal Translators}\label{seclastproof}

In this section, we prove Theorem~\ref{thmconj}, which is
a classification of complete, properly immersed minimal
surfaces of $\hn3$ invariant under the 1-parameter
group $\{\Gamma_t\}_{t\in\R}$ of hyperbolic isometries of
$\hn3$ defined (in the half-space model) by
\begin{equation*}{(x,y,z)\in \R^3_+\mapsto\Gamma_t(x,y,z) = (e^tx,e^ty,e^tz).}\end{equation*}

We let $\ol{\hn3}$ be the topological space given by the
compactification of $\hn3$ with respect to the
so-called {\em cone topology} (as defined
in~\cite{eberloneill}) and let
$S^2(\infty)$ denote the asymptotic boundary
of $\h^3$. In the upper half space model of $\h^3,$
$S^2(\infty)$ is identified with
the one point compactification of $\R^2=\{z=0\}$:
\[
S^2(\infty)=\R^2\cup\{\infty\}.
\]
Along the proof, given a surface $\Sigma\subset\h^3,$
we will write $\partial_\infty\Sigma$ for the
asymptotic boundary of $\Sigma,$ that is,
$\partial_\infty\Sigma:=\overbar\Sigma\cap S^2(\infty),$
where $\overbar\Sigma$ is the closure of $\Sigma$
in $\overbar{\h^3}$.

\begin{proof}[Proof of Theorem~\ref{thmconj}]
Consider $\alpha$ a curve in the horosphere $\mathscr H:=\{z = 1\}$
and assume that $\Sigma = \{e^t \alpha\mid t\in \R\}$ is a
complete, properly immersed minimal
surface invariant under the action of
$\{\Gamma_t\}_{t\in\R}$ and generated by $\alpha$.
For the remainder of the proof, we will assume that
$\alpha$ is parameterized by arc length over a maximal
interval $I$.

\begin{claim}\label{lem2}
For $\theta\in [0,\pi)$, let $L_\theta = \{(r\cos(\theta),r\sin(\theta),1)\mid r\in \R\}$.
If $\alpha$ intersects $L_\theta$
in two (or more) points, then $\alpha = L_\theta$.
\end{claim}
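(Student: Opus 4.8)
The plan is to compare $\Sigma$ with the totally geodesic vertical half‑plane $\Pi_\theta\subset\hn3$ that contains both $L_\theta$ and the $z$‑axis. Note that $\Pi_\theta$ is minimal (it is totally geodesic), is invariant under $\{\Gamma_t\}$, and satisfies $\Pi_\theta\cap\mathscr H=L_\theta$; in fact $\Pi_\theta$ is itself generated by $L_\theta$ in the same way $\Sigma$ is generated by $\alpha$. Since $\alpha\subset\mathscr H$, the set of parameters where $\alpha$ meets $L_\theta$ is exactly the set of parameters where $\alpha$ meets $\Pi_\theta$. To detect the position of $\Sigma$ relative to $\Pi_\theta$, I would use the auxiliary function
\[
f:=\sinh\big(d(\,\cdot\,,\Pi_\theta)\big)\colon\hn3\to\R ,
\]
where $d(\,\cdot\,,\Pi_\theta)$ is the (globally smooth) signed distance to $\Pi_\theta$. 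Because $\Pi_\theta$ is totally geodesic and $\hn3$ has constant curvature $-1$, one has the pointwise identity $\mathrm{Hess}\,f=f\langle\,,\,\rangle$ on $\hn3$; restricting to the minimal surface $\Sigma$ and using $\mathbf H=0$,
\[
\Delta_\Sigma f=\mathrm{tr}_{T\Sigma}\mathrm{Hess}\,f+\langle\nabla f,\mathbf H\rangle=2f .
\]

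Next I would exploit the $\{\Gamma_t\}$‑invariance. Each $\Gamma_t$ is an isometry of $\hn3$ preserving $\Pi_\theta$, so $f\circ\Gamma_t=f$; and since $\Sigma=\{e^t\alpha(s)\}$ is $\Gamma_t$‑invariant, $f|_\Sigma$ is constant along the orbits $t\mapsto e^t\alpha(s)$. Hence $f|_\Sigma$ descends to a function $\hat f(s):=f(\alpha(s))$ of the arc‑length parameter of $\alpha$, whose zeros are precisely the parameters $s$ with $\alpha(s)\in\Pi_\theta\cap\mathscr H=L_\theta$. In the coordinates $(s,t)$ the induced metric of $\Sigma$ is independent of $t$ (the action of $\Gamma_\tau$ is $(s,t)\mapsto(s,t+\tau)$), so the equation $\Delta_\Sigma f=2f$ collapses to a self‑adjoint second order linear ODE
\[
\big(p(s)\,\hat f'\big)'=2\,w(s)\,\hat f ,\qquad p>0,\ w>0 ,
\]
on the maximal interval $I$ of definition of $\alpha$ — an equation with \emph{positive} zeroth‑order coefficient, whose nonzero solutions can have at most one zero.

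To conclude, suppose $\alpha$ meets $L_\theta$ at two distinct points, so $\hat f(s_1)=\hat f(s_2)=0$ for some $s_1<s_2$ in $I$. Multiplying the ODE by $\hat f$ and integrating by parts over $[s_1,s_2]$ (the boundary term $[p\hat f'\hat f]_{s_1}^{s_2}$ vanishes) gives
\[
-\int_{s_1}^{s_2}p\,(\hat f')^2\,ds=2\int_{s_1}^{s_2}w\,\hat f^2\,ds ,
\]
whose left‑hand side is $\le 0$ and right‑hand side is $\ge 0$; as $p,w>0$, this forces $\hat f\equiv 0$ on $[s_1,s_2]$, and then $\hat f\equiv 0$ on all of $I$ by uniqueness for the ODE. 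Therefore $\alpha(I)\subset\Pi_\theta\cap\mathscr H=L_\theta$, and since $\alpha$ is unit‑speed on a maximal interval, it is all of $L_\theta$.

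I expect the crux to be the first two paragraphs: recognizing that $\sinh$ of the distance to the totally geodesic plane $\Pi_\theta$ converts minimality into the linear identity $\Delta_\Sigma f=2f$, and that the $\{\Gamma_t\}$‑symmetry reduces it to a disconjugate ODE; once that structure is in place the final step is a routine eigenvalue‑type obstruction. The only point needing a short verification is that the weights $p,w$ are strictly positive, i.e.\ that $X(s,t)=e^t\alpha(s)$ is an immersion, which holds because $\alpha$ lies in the slice $\{z=1\}$ and is unit‑speed, so $\alpha$ and $\alpha'$ are nowhere proportional. (Tangential intersections are subsumed in the above, as they produce a zero of $\hat f$ of order $\ge 2$.)
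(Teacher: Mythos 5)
Your proposal is correct, and it proves the claim by a genuinely different mechanism than the paper. The paper argues by contradiction at a single point: on the compact subarc of $\alpha$ between the two intersection points it locates an interior extremum of the coordinate $y$, places through the corresponding orbit line a tilted (equidistant) plane whose mean curvature vector points toward the side containing $\Sigma$, and invokes the mean curvature comparison principle at that tangency. You instead compare $\Sigma$ with the totally geodesic plane $\Pi_\theta$ through the $z$-axis and $L_\theta$, linearize via the identity $\mathrm{Hess}\,(\sinh d(\cdot,\Pi_\theta))=\sinh d(\cdot,\Pi_\theta)\,\langle\,,\,\rangle$ (which is correct: in the half-space model with $\theta=0$ this function is just $y/z$, and the identity can be checked directly), and reduce, by the $\Gamma_t$-invariance, to the disconjugate Sturm equation $(p\hat f')'=2w\hat f$ with $p,w>0$, whose nontrivial solutions have at most one zero. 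Your observation that $X(s,t)=e^t\alpha(s)$ is always an immersion (Cauchy--Schwarz gives $\det g=1+|\alpha|^2-\langle\alpha,\alpha'\rangle^2>0$) closes the only gap in positivity of the weights. Both arguments ultimately rest on the maximum principle, but yours avoids having to produce the interior extremum and the tangent equidistant barrier, handles tangential intersections without comment, and as a bonus yields the sharper statement that $\hat f$ vanishes at most once, which is what underlies the monotonicity of the angle function $\theta(s)$ used immediately after the claim. One cosmetic slip: the standard formula is $\Delta_\Sigma f=\mathrm{tr}_{T\Sigma}\mathrm{Hess}\,f+\langle\overline\nabla f,2\mathbf{H}\rangle$ with the paper's normalization $\mathbf{H}=\tfrac{k_1+k_2}{2}\eta$, so your curvature term is off by a factor of $2$; this is harmless here since $\mathbf{H}=0$.
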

\begin{proof}
After a rotation, it suffices to prove
the claim for $\theta = 0$. Assume that
there are two distinct points
$p_1 = (r_1,0,1),\,p_2 = (r_2,0,1)\in \alpha$
and, arguing by contradiction,
assume that $\alpha \neq L_0$.

Consider the compact arc $a$ that $\{p_1,p_2\}$ bounds in
$\alpha$. Since $\alpha\neq L_0$, there exists a point
$\widehat{p}$ in the interior of $a$ where the second coordinate
function $y$ has a local maximum or a local minimum, and
we may rotate $\alpha$ once again to assume it is
a local maximum, attained at $(\widehat{x},\widehat{y},1)$
with $\widehat{y}>0$.
Let $P$ be the tilted plane of $\hn3$
that contains the line
$\{(r,\widehat{y},1)\mid r\in \R\}$
and whose asymptotic boundary contains $(0,0,0)$.
Then, $P$ is an
equidistant surface to the totally geodesic plane
$\{y = 0\}$ and its mean curvature vector points upwards.
Since $\Sigma$ locally stays in the mean convex side of $P$
and intersects $P$ tangentially
along the line $\{e^t \widehat{p}\mid
t\in\R\}$, we obtain a contradiction with
the mean curvature comparison principle.
\end{proof}

A straightforward consequence of Claim~\ref{lem2} is that
the curve $\alpha$ must be
properly embedded, $I = \R$ and,
if $(0,0,1)\in \alpha$, $\Sigma$ is a vertical
plane. Thus, assuming that $\Sigma$ is not a vertical plane,
we may parameterize $\alpha$ as
\begin{equation}\label{eqAlpha}
\alpha(s)=(r(s)\cos(\theta(s)),r(s)\sin(\theta(s)),1),
\end{equation}
where $s$ is the arc length of $\alpha$ and
$r(s)>0$ for all $s\in \R$.
Claim~\ref{lem2} implies that,
after a rotation in $\hn3$ (and
possibly reparameterizing on the opposite orientation)
the function $\theta$ must satisfy
$\theta'(s)\geq 0$ and
\begin{equation} \label{eq-limitangles}
\lim_{s\to -\infty}\theta(s) = 0,\quad
\lim_{s\to +\infty}\theta(s) = \theta_+ >0.
\end{equation}
In fact, $\theta_+\in(0,\pi]$. Indeed,
arguing by contradiction, assume that $\theta_+>\pi$
and choose $\theta^*\in(\pi,\theta_+)$. Then,
Claim~\ref{lem2} implies that $\alpha$ intersects
$L = L_{\theta^+-\pi}$ at most in one point,
so the fact that $(0,0,1)\not\in \alpha$ implies that
either $\theta(s)\in(0,\theta^*)$ for all $s\in I$ or
$\theta(s)\in(\theta^*-\pi,\theta_+)$ for all $s\in I$,
both situations in contradiction with~\eqref{eq-limitangles}.

For the next claim,
the union of two half-lines in $\R^2$
issuing from a point $p$ and making an oriented angle
$\theta\in (0,2\pi)$ will be called a
$\theta$-\emph{hinge} with \emph{vertex} $p.$

\begin{claim}
$\partial_\infty\Sigma\cap\R^2$ is a
$\theta_+$-hinge
with vertex at
the origin $\mathbf 0:=(0,0,0).$
\end{claim}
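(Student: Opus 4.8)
The plan is to read off $\partial_\infty\Sigma$ directly from the parameterization
\[
\Sigma=\{(r(s)e^t\cos\theta(s),\,r(s)e^t\sin\theta(s),\,e^t)\mid (s,t)\in\R^2\}
\]
coming from~\eqref{eqAlpha}. I will use the standard description of the cone topology in the half-space model: a point $\zeta\in\R^2=S^2(\infty)\setminus\{\infty\}$ is a limit of a sequence $p_n\in\h^3$ (in the cone topology of $\overline{\h^3}$) if and only if $p_n\to(\zeta,0)$ in the Euclidean metric of $\R^3$; in particular this forces the height of $p_n$ to tend to $0$. I will also use, as already observed in the paragraph following Claim~\ref{lem2}, that $\alpha\colon\R\to\mathscr H$ is a proper embedding, so that $r(s)\to+\infty$ as $s\to\pm\infty$. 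Let $\mathcal K$ denote the union of the two closed half-lines issuing from $\mathbf 0$ in the directions $\theta=0$ and $\theta=\theta_+$, i.e.\ the $\theta_+$-hinge with vertex $\mathbf 0$; the goal is to show $\partial_\infty\Sigma\cap\R^2=\mathcal K$.

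For the inclusion $\mathcal K\subset\partial_\infty\Sigma$, letting $t\to-\infty$ in $e^t\alpha(s_0)$ shows $\mathbf 0\in\partial_\infty\Sigma$. For a point $\rho(\cos\theta_+,\sin\theta_+)$ with $\rho>0$, I would pick $s_n\to+\infty$ and then $t_n$ with $e^{t_n}r(s_n)=\rho$; since $r(s_n)\to+\infty$ we get $t_n\to-\infty$, while~\eqref{eq-limitangles} gives $\theta(s_n)\to\theta_+$, so
\[
e^{t_n}\alpha(s_n)=(\rho\cos\theta(s_n),\,\rho\sin\theta(s_n),\,e^{t_n})\longrightarrow(\rho\cos\theta_+,\,\rho\sin\theta_+,\,0),
\]
which is the point $\rho(\cos\theta_+,\sin\theta_+)\in\R^2$. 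Thus the open half-line from $\mathbf 0$ in direction $\theta_+$ lies in $\partial_\infty\Sigma$; taking instead $s_n\to-\infty$ and using $\theta(s_n)\to 0$ gives the open half-line in direction $0$. Together with $\mathbf 0$ this yields $\mathcal K\subset\partial_\infty\Sigma\cap\R^2$.

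For the reverse inclusion, suppose $q_n=e^{t_n}\alpha(s_n)$ converges in the cone topology to some $\zeta\in\R^2$. By the description of the cone topology, the height $e^{t_n}\to 0$, so $t_n\to-\infty$, and $e^{t_n}r(s_n)(\cos\theta(s_n),\sin\theta(s_n))\to\zeta$; hence $e^{t_n}r(s_n)\to|\zeta|=:\rho_0$. If $\rho_0=0$ then $\zeta=\mathbf 0\in\mathcal K$. If $\rho_0>0$, then $r(s_n)=(e^{t_n}r(s_n))/e^{t_n}\to+\infty$, so $|s_n|\to\infty$; passing to a subsequence we may assume $s_n\to+\infty$ (the case $s_n\to-\infty$ being symmetric), and then $\theta(s_n)\to\theta_+$ forces $\zeta=\rho_0(\cos\theta_+,\sin\theta_+)$, a point of $\mathcal K$. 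Therefore $\partial_\infty\Sigma\cap\R^2\subset\mathcal K$, which together with the previous step proves the claim. (The same computation shows $\partial_\infty\Sigma=\mathcal K\cup\{\infty\}$, though only the stated intersection is needed here.)

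I expect the one genuinely delicate point to be the identification of the cone topology near a finite asymptotic point with the Euclidean topology of $\overline{\R^3_+}$ — that is, the equivalence ``$p_n\to\zeta\in\R^2$ in $\overline{\h^3}$ if and only if $p_n\to(\zeta,0)$ Euclidean-ly, forcing the heights to tend to $0$''. Once this is granted, everything else is bookkeeping with the parameterization~\eqref{eqAlpha}, the angle limits~\eqref{eq-limitangles}, and the properness of $\alpha$.
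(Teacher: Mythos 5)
Your proposal is correct and follows essentially the same route as the paper's proof: both use the parameterization $e^{t}\alpha(s)$ together with the properness of $\alpha$ (so $r(s)\to+\infty$), the choice $e^{t_n}r(s_n)=\rho$ to realize each point of the two half-lines, and the same limiting argument for the reverse inclusion. The only difference is that you make explicit the identification of cone-topology convergence to a finite boundary point with Euclidean convergence in $\overline{\R^3_+}$, which the paper uses implicitly.
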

\begin{proof}
Using the notation of~\eqref{eqAlpha}, we may parameterize
$\Sigma$ as
\begin{equation*}\Sigma = \{(e^tr(s)\cos(\theta(s)),e^tr(s)\sin(\theta(s)),e^t)
\mid t,s\in\R\}.\end{equation*}
Our next
argument is to show that
$\ell_0\cup \ell_{\theta_+}\cup \{\mathbf 0\}\subset \pai\Sigma$,
where, for $\theta \in [0,2\pi)$, we let
$\ell_\theta = \{(r\cos(\theta),r\sin(\theta),0)\mid r>0\}.$

Note that $\lim_{s\to \pm\infty}r(s) = +\infty$, as
$\alpha$ is properly embedded and noncompact.
Consider a divergent sequence $(s_n)_{n\in\N}\subset \R$, so
$\lim_{n\to \infty} r(s_n) = +\infty$. Take $r>0$ and
let, for $n\in\N$, $t_n = \log(r/r(s_n))$ and
\begin{equation*}p_n = e^{t_n}\alpha(s_n) =
\left(r\cos(\theta(s_n)),r\sin(\theta(s_n)),
\frac{r}{r(s_n)}\right)\in \S.\end{equation*}
Since $r>0$ and $r(s_n)\to +\infty$,
$\lim_{n\to \infty}\frac{r}{r(s_n)} = 0$.
Furthermore, it follows from~\eqref{eq-limitangles} that
$\lim_{n\to \infty}\theta(s_n) = \theta_+$, thus
\begin{equation*}\lim_{n\to \infty} p_n =(r\cos(\theta_+),r\sin(\theta_+),0)
\in \pai \S.\end{equation*}
Since $r$ is arbitrary, this gives $\ell_{\theta_+}\subset
\pai \Sigma$. Analogously, we may prove that
$\{\mathbf 0\} \cup \ell_0\subset \pai \Sigma$.

Next, we prove that
$\ell_0\cup \ell_{\theta_+}\cup \{\mathbf 0\} \supset(\pai\Sigma\cap\R^2).$
Choose $\ol{p}\in\partial_\infty\Sigma\cap\R^2$ and,
assuming that $\ol{p}\neq \mathbf0$,
write $\ol{p} = (r\cos(\theta),r\sin(\theta),0)$ for some
$r >0$ and $\theta \in[0,2\pi)$.
Let $(p_n)_{n\in\N}$ be a sequence in $\Sigma$ such that
$p_n\to\ol{p}$, so there exist uniquely defined
$s_n,t_n\in \R$ such that
\begin{equation*}p_n = e^{t_n}\alpha(s_n) =
\left(
e^{t_n}r(s_n)\cos(\theta(s_n)),
e^{t_n}r(s_n)\sin(\theta(s_n)),e^{t_n}\right).
\end{equation*}
The fact that $p_n\to \ol{p}$ implies that $e^{t_n}\to 0$.
Moreover,
$\lim_{n\to \infty}e^{t_n}r(s_n) = r$, thus
$r(s_n)\to +\infty$, and it follows
that either $s_n\to +\infty$ (in which case
$\theta(s_n)\to \theta_+$) or $s_n\to -\infty$ (and
$\theta(s_n)\to 0$). In both situations we obtain
$\ol{p}\in \ell_0\cup \ell_{\theta_+}$, which proves the claim.
\end{proof}
At this point, we have shown that a properly immersed minimal
surface $\Sigma\subset\hn3$ which is invariant under the group
$\{\Gamma_t\}_{t\in\R}$ is in fact properly embedded
and its asymptotic boundary $\pai \Sigma\cap \R^2$ is a
$\theta_+$-hinge with vertex at $\mathbf 0$.
The existence
and uniqueness of such surfaces
was proven in~\cite{GRR} (unpublished) and also presented
in~\cite[Proposition~A.1]{ST}, which finishes the proof
of Theorem~\ref{thmconj}.
\end{proof}

\end{document}